\newtheorem{theorem}{Theorem}[section]
\newtheorem{corollary}{Corollary}[section]
\newtheorem{lemma}{Lemma}[section]
\newtheorem{prop}{Proposition}[section]
\theoremstyle{definition}
\newtheorem{definition}{Definition}[section]
\newtheorem{observation}{Observation}[section]
\newtheorem{example}{Example}[section]
\numberwithin{equation}{section}
\title{Word-representability of co-bipartite graph}
\author{\hspace{1cm} Biswajit Das \ and Ramesh Hariharasubramanian \\ 
{{\footnotesize d.biswajit@iitg.ac.in},\ {\footnotesize  ramesh\_h@iitg.ac.in}}\\{\footnotesize Department of Mathematics, Indian Institute of Technology Guwahati, Guwahati, Assam 781039, India}}
\begin{document}
	\maketitle
	
	\begin{abstract}
 A graph $G = (V, E)$ is \textit{word-representable}, if there exists a word $w$ over the alphabet $V$ such that for letters $\{x,y\}\in V$, $x$ and $y$ alternate in $w$ if and only if $xy \in E$.  A graph is co-bipartite if its complement is a bipartite graph. Therefore, the vertex set of a co-bipartite graph can be partitioned into two disjoint cliques.
     
        The concept of word-representability for co-bipartite graphs has not yet been fully studied. In the book \textit{Words and Graphs} written by \textit{Sergey Kitaev} and \textit{Vadim Lozin}, examples of co-bipartite graphs that are not word-representable are provided. The authors have stated that it remains an open problem to characterize word-representable co-bipartite graphs. It is known that taking the complement of word-representable graphs does not preserve their word-representability. In this paper, we first identify certain classes of bipartite graphs for which word-representation is preserved after the complement operation. We found that the complement of the path graphs, even cycle graphs and generalized crown graphs are also word-representable. Next, we aim to find word-representable co-bipartite graphs in which the size of one clique partition is fixed while the other one can vary. We studied the word-representability of co-bipartite graphs where the sizes of one clique partition are $2$ and $3$. We found that any co-bipartite graphs where the size of the one clique partition is $2$ are word-representable. Also, when the size of the one clique partition is $3$, we found certain co-bipartite graphs are word-representable. Additionally, for word-representable graphs, it has been established that a graph is word-representable if and only if it can be oriented in a specific manner, known as semi-transitive orientation. We provide the necessary and sufficient conditions for a co-bipartite graph to have a semi-transitive orientation. \\
    \textbf{Keywords:} word-representable graph, semi-transitive orientation, split graph, co-bipartite graph.
	\end{abstract}

	\maketitle
	\pagestyle{myheadings}
	
\section{Introduction}
 The theory of word-representable graphs is an up-and-coming research area. Sergey Kitaev first introduced the notion of word-representable graphs based on the study of the celebrated Perkins semi-group \cite{kitaev2008word}. The word-representable graphs generalized several key graph families, such as circle graphs, comparability graphs, 3-colourable graphs. However, not all graphs are word-representable, so finding those is an interesting problem.
 
The concept of word-representability has been studied for certain classes of graphs, while for others, it remains uncertain whether they are word-representable. In the paper \cite{kitaev2008representable}, various graph classes are studied, identifying both word-representable and non-word-representable graphs. Specifically, outerplanar graphs and prism graphs are shown as word-representable, whereas wheel graphs with an even number of vertices are shown as non-word-representable. Additionally, the line graphs of wheel graphs and complete graphs are shown as non-word-representable in the paper \cite{kitaev2011representability}. Furthermore, the word-representability of crown graphs and $k$-cube graphs is discussed in the papers \cite{glen2018representation} and \cite{broere24k}, respectively.
In the book \cite{kitaev2015words}, various classes of graphs are discussed, some of which are word-representable and others that are not. In Section 3, Proposition 3.5.4 of this book, the existence of both word-representable and non-word-representable graphs is established within several classes. They also stated the characterization of word-representability of these classes as an open problem. 
Later, line graphs and split graphs are studied among these classes. 
Word-represetability of split graphs is studied in the papers \cite{iamthong2022word},\cite{kitaev2021word} and \cite{chen2022representing}. These papers provide the necessary and sufficient conditions for an orientation of a split graph to be semi-transitive. Additionally, based on these conditions, a polynomial-time algorithm is presented to recognize word-representable split graphs. 
In the graph classes discussed in Proposition 3.5.4 of the book \cite{kitaev2015words}, co-bipartite graphs are also included. Co-bipartite graphs are obtained by taking the complement of a bipartite graph. Therefore, the vertices of these graphs can be partitioned into two cliques, and there can be edges connecting vertices from both cliques. If every vertex in one clique is adjacent to every vertex in the other clique, the resulting graph is a complete graph, which is known to be word-representable.
Additionally, there are some known examples of non-word-representable graphs with $7$ vertices in the book \cite{kitaev2015words}. However, the correct list of non-word-representable graphs on $7$ vertices was provided in the paper \cite{kitaev2024human}. Among the graphs, two examples, $\overline{T_1}$ and $\overline{T_2}$, are co-bipartite graphs, as shown in Figures \ref{nonRepTri1} and \ref{nonRepTri2}, respectively.
Our goal is to characterize word-representable co-bipartite graphs. In this section, we define relevant concepts and the necessary preliminaries regarding word-representable graphs. In Section 2, we identify subclasses of bipartite graphs whose complements preserve word-representability. We show that the complements of path graphs, even cycle graphs, and generalized crown graphs are word-representable.
In Section 3, we fix the size of one of the cliques in the two partitions of the vertices of the co-bipartite graphs. We show that if one clique has 2 vertices, the corresponding graph is word-representable. Additionally, when one clique consists of 3 vertices, we establish that by restricting these graphs based on possible neighbours, we can identify word-representable co-bipartite graphs.
In Section 4, we derive the necessary and sufficient conditions for a co-bipartite graph to have a semi-transitive orientation. Next, in Section 5, we explore the connection between split graphs and co-bipartite graphs. 

The required background and preliminaries related to word-representable graphs are described below.
 
 A \textit{subword} of a word $w$ is a word obtained by removing certain letters from $w$. In a word $w$, if $x$ and $y$ alternate, then $w$ contains $xyxy\cdots$ or $yxyx\cdots$ (odd or even length) as a subword. 

\begin{definition}\textit{(\cite{kitaev2015words} , Definition 3.0.5)}.
 A simple graph $G = (V, E)$ is \textit{word-representable} if there exists a word $w$ over the alphabet $V$ such that letters $x$ and $y$, $\{x,y\}\in V$ alternate in $w$ if and only if $xy \in E$, i.e., $x$ and $y$ are adjacent for each $x\not=y$. If a word $w$ \textit{represents} $G$, then $w$ contains each letter of $V(G)$ at least once.
\end{definition}

 In a word $w$ that represents a graph $G$, if $xy\notin E(G)$, then the non-alternation between $x$ and $y$ occurs if any one of these $xxy$, $yxx$, $xyy$, ${yyx}$ subwords is present in $w$.
\begin{definition}
    (\textit{\cite{kitaev2015words}, Definition 3.2.1.}) \textit{$k$-uniform word} is the word $w$ in which every letter occurs exactly $k$ times.
\end{definition}

\begin{definition}(\textit{\cite{kitaev2015words}, Definition 3.2.3.)} A graph is $k$-word-representable if there exists a $k$-uniform word representing it.
\end{definition}

\begin{definition} (\textit{\cite{kitaev2017comprehensive}, Definition 3})
    For a word-representable graph $G$, \textit{the representation number} is the least $k$ such that $G$ is $k$-representable.
\end{definition}
\begin{prop}(\textit{\cite{kitaev2015words}, Proposition 3.2.7})\label{pr1}
		Let $w = uv$ be a $k$-uniform word representing a graph $G$, where $u$ and $v$ are two, possibly empty, words. Then, the word $w' = vu$ also represents $G$.
\end{prop}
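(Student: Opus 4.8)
The plan is to analyze the behaviour of each pair of letters separately under the rearrangement $w = uv \mapsto w' = vu$, exploiting the fact that for a $k$-uniform word every pair of letters contributes a \emph{balanced} binary subword (exactly $k$ copies of each letter).

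First I would fix two distinct letters $x,y \in V(G)$ and let $p$ denote the subword of $w$ obtained by deleting every letter other than $x$ and $y$. Writing $p_u$ and $p_v$ for the corresponding subwords of $u$ and $v$, we have $p = p_u p_v$, while the analogous subword of $w' = vu$ is $p' = p_v p_u$. Thus $p'$ is the cyclic rotation of $p$ by $|p_u|$ positions. Because $w$ is $k$-uniform, $p$ (and hence $p'$) contains exactly $k$ occurrences of $x$ and $k$ of $y$, so $|p| = 2k$, and $x,y$ alternate in $w$ (resp.\ $w'$) precisely when $p$ (resp.\ $p'$) is alternating.

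Next I would record the elementary fact that a balanced binary word of length $2k$ is alternating if and only if it equals $(xy)^k$ or $(yx)^k$, and that the two-element set $\{(xy)^k,\,(yx)^k\}$ is closed under every cyclic rotation (an even shift fixes each word, an odd shift interchanges them). Since $p'$ is a cyclic rotation of $p$, it follows that $p$ lies in this set if and only if $p'$ does; equivalently, $x$ and $y$ alternate in $w$ if and only if they alternate in $w'$. As the pair $\{x,y\}$ was arbitrary and $w'$ contains exactly the same letters as $w$, the word $w'$ satisfies the alternation condition defining $G$ exactly when $w$ does, so $w'$ represents $G$.

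The only place that genuinely requires care — and the only place the hypothesis is used — is the characterization of alternation in terms of the two words $(xy)^k, (yx)^k$ and its rotation-invariance: for \emph{unbalanced} binary words the alternating words are no longer closed under cyclic rotation (for instance $xyx$ rotates to $yxx$), so the argument collapses without the assumption that $w$ is $k$-uniform. I expect this observation to be the crux; everything else is bookkeeping about subwords.
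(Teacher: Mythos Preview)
Your argument is correct: restricting to a pair $\{x,y\}$ reduces the question to whether the balanced binary word $p$ of length $2k$ is alternating, you correctly observe that the alternating balanced words are exactly $(xy)^k$ and $(yx)^k$, and that this set is closed under cyclic rotation, which is precisely what passing from $uv$ to $vu$ does to $p$. Your remark that the $k$-uniformity hypothesis is essential (via the $xyx \mapsto yxx$ example) is also on point.

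There is, however, nothing to compare against in this paper: the proposition is quoted from \cite{kitaev2015words} as a known preliminary result and is stated without proof here. So while your write-up is a valid self-contained proof, the paper itself offers no argument of its own for you to match or contrast with.
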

\begin{prop}(\textit{\cite{kitaev2015words}, Proposition 3.0.15.})\label{pr2}
   Let $w = w_1xw_2xw_3$ be a word representing a graph $G$,
where $w_1$, $w_2$ and $w_3$ are possibly empty words, and $w_2$ contains no $x$. Let $X$ be the set of all letters that appear only once in $w_2$. Then, possible candidates for $x$ to be adjacent in $G$ are the letters in $X$. 
\end{prop}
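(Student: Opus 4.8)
The plan is to prove the statement directly by showing $N(x) \subseteq X$, i.e.\ that every letter $y$ adjacent to $x$ in $G$ must occur exactly once in $w_2$. The crucial feature of the factorisation $w = w_1 x w_2 x w_3$ is that the two displayed occurrences of $x$ are \emph{consecutive} among all occurrences of $x$ in $w$, since $w_2$ contains no $x$; hence any letter that alternates with $x$ must interleave with this particular pair of $x$'s in a tightly controlled way.

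First I would fix an arbitrary letter $y \neq x$ with $xy \in E(G)$, so that by definition $x$ and $y$ alternate in $w$, and then pass to the subword of $w$ on the two-letter alphabet $\{x,y\}$. Since $w_2$ contains no $x$, this subword has the form $u\, x\, v\, x\, u'$, where $v$ is the subword of $w_2$ on the single letter $y$, and $u,u'$ are the corresponding subwords of $w_1,w_3$. Because this two-letter word alternates and the two displayed occurrences of $x$ are adjacent occurrences of $x$ in it, there must be exactly one $y$ strictly between them: zero $y$'s there would produce the factor $xx$, and two or more would produce the factor $yy$, each contradicting alternation. Therefore $v$ is a single $y$, which means $y$ occurs exactly once in $w_2$, i.e.\ $y \in X$.

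Since $y$ was an arbitrary neighbour of $x$ in $G$, it follows that the only possible candidates for adjacency with $x$ are the letters of $X$, which is the claim. I do not anticipate a genuine obstacle in this argument; the only mild subtlety is to make precise that deleting all letters other than $x$ and $y$ keeps the two flanking occurrences of $x$ adjacent \emph{as occurrences of $x$} — this is immediate from the definition of a subword, but it is the one place where a careful word is warranted, because it is exactly this adjacency that forces the ``exactly one $y$'' conclusion.
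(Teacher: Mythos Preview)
Your argument is correct and is the standard direct proof: restricting to the two-letter subword on $\{x,y\}$ and using that the two displayed copies of $x$ are consecutive occurrences of $x$ forces exactly one $y$ between them. Note, however, that the paper does not supply its own proof of this proposition; it is quoted as a preliminary result from \cite{kitaev2015words} (Proposition~3.0.15) without argument, so there is nothing in the paper to compare your approach against.
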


 The \textit{initial permutation} of $w$ is the permutation obtained by removing all but the leftmost occurrence of each letter $x$ in $w$, and it is denoted by $\pi(w)$. Similarly, the \textit{final permutation} of $w$ is the permutation obtained by removing all but the rightmost occurrence of each letter $x$ in $w$, and it is denoted $\sigma(w)$. For a word $w$, $w_{\{x_1, \cdots, x_m\}}$ denotes the word after removing all letters except the letters $x_1, \ldots, x_m$ present in $w$. 
 \begin{example}
     $w = 6345123215$, we have $\pi(w) = 634512$, $\sigma(w) = 643215$ and $w_{\{6,5\}} = 655$.
  \end{example}
	\begin{observation}(\textit{\cite{kitaev2008representable}, Observation 4})\label{pw}
 		Let $w$ be the word-representant of $G$. Then $\pi(w)w$ also represents $G$.
 	\end{observation}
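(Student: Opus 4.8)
The plan is to check directly, for each pair of distinct letters $x,y\in V(G)$, that $x$ and $y$ alternate in $\pi(w)w$ if and only if they alternate in $w$; since $w$ represents $G$, this last condition is equivalent to $xy\in E(G)$, which is exactly what must be established. First I would isolate the only two facts about $\pi(w)$ that the argument uses: every letter of $V(G)$ occurs in $\pi(w)$ exactly once (because $w$ contains each letter at least once), and for any two letters the order in which they occur in $\pi(w)$ coincides with the order of their first occurrences in $w$. I would also record the trivial observation that $w$ is a suffix of $\pi(w)w$, so that for any set $S$ of letters one has $(\pi(w)w)_S=\pi(w)_S\,w_S$, with $w_S$ a suffix and $\pi(w)_S$ a prefix of it.

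The argument then splits into two cases according to whether $xy$ is an edge. If $xy\notin E(G)$, then $x$ and $y$ fail to alternate in $w$, so $w_{\{x,y\}}$ contains a factor $xx$ or $yy$; since $w_{\{x,y\}}$ is a suffix of $(\pi(w)w)_{\{x,y\}}$, that same factor occurs in $(\pi(w)w)_{\{x,y\}}$, so $x$ and $y$ do not alternate in $\pi(w)w$. If $xy\in E(G)$, then $x$ and $y$ alternate in $w$; assuming without loss of generality that the first occurrence of $x$ precedes that of $y$ in $w$, we get $w_{\{x,y\}}=xyxy\cdots$ and, by the order property of $\pi(w)$, $\pi(w)_{\{x,y\}}=xy$. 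Hence $(\pi(w)w)_{\{x,y\}}=xy\cdot xyxy\cdots=xyxyxy\cdots$, which is alternating. The two cases together yield the claim.

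I do not expect a genuine obstacle here; the only step needing care is the seam in the second case, where one must use the definition of $\pi(w)$ to guarantee that the last letter of the block $\pi(w)_{\{x,y\}}$ is precisely the letter that begins the alternating pattern of $w_{\{x,y\}}$, so that the concatenation preserves alternation rather than creating a repeated letter at the junction. Once this is checked the proof is complete. (The symmetric statement that $w\sigma(w)$ also represents $G$ follows by the same reasoning, appending $\sigma(w)$ on the right instead of prepending $\pi(w)$ on the left.)
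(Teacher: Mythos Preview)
Your argument is correct. The paper does not supply its own proof of this observation; it is imported verbatim as a known fact from \cite{kitaev2008representable} and used later in the proof of Theorem~2.2. So there is nothing in the present paper to compare your approach against.

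For what it is worth, the argument you give is exactly the standard one: restrict to a two-letter alphabet, use that $\pi(w)$ contributes a single copy of each letter in the order of first occurrences, and check the seam. The one point worth tightening is the non-edge case. You say that if $x$ and $y$ fail to alternate in $w$ then $w_{\{x,y\}}$ contains a factor $xx$ or $yy$; strictly speaking, non-alternation just means $w_{\{x,y\}}$ is not of the form $xyxy\cdots$ or $yxyx\cdots$, and the presence of a square factor follows only because $w_{\{x,y\}}$ is a binary word (a binary word with no $xx$ or $yy$ factor \emph{is} alternating). That implication is trivially true but you should state it, since in the paper's phrasing of non-alternation (``any one of $xxy$, $yxx$, $xyy$, $yyx$ is present'') it is implicitly assumed that both letters occur at least twice, which need not hold for an arbitrary representing word. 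Once that is said, the suffix argument goes through unchanged.
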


\begin{definition}(\textit{\cite{kitaev2015words}, Definition 3.2.8.}) 
A word $u$ contains a word $v$ as a \textit{factor} if $u = xvy$ where $x$ and $y$ can be empty words.
\end{definition}

\begin{example}
    The word $421231423$ contains the words $123$ and $42$ as factors, while all factors of the word $2131$ are $1$, $2$, $3$, $21$, $13$, $31$, $213$, $131$ and $2131$.
\end{example}

There exists a connection between the orientation of the graphs and word-representability. In the following, the definition of semi-transitive orientation and the connection with word-representability is described.

Semi-transitive orientation is defined based on shortcuts in the papers \cite{halldorsson2011alternation} and \cite{halldorsson2016semi}.
 A semi-cycle is the directed acyclic graph obtained by reversing the
direction of one edge of a directed cycle \cite{kitaev2015words}. An acyclic digraph is a {\em shortcut} if it is induced by the vertices of a semi-cycle and contains a pair of non-adjacent vertices. Thus, any shortcut  

\begin{itemize}

\item is {\em acyclic} (that it, there are {\em no directed cycles});

\item has {\em at least} 4 vertices;

\item has {\em exactly one} source (the vertex with no edges coming in), {\em exactly one} sink (the vertex with no edges coming out), and a {\em directed path} from the source to the sink that goes through {\em every} vertex in the graph;

\item has an edge connecting the source to the sink that we refer to as the {\em shortcutting edge};

\item is {\em not} transitive (that it, there exist vertices $u$, $v$ and $z$ such that $u\rightarrow v$ and $v\rightarrow z$ are edges, but there is {\em no} edge $u\rightarrow z$).

\end{itemize}

\begin{definition}[\cite{halldorsson2016semi}, Definition 1] An orientation of a graph is {\em semi-transitive} if it is {\em acyclic} and 
{\em shortcut-free}.\end{definition}

From definitions, it is easy to see that {\em any} transitive orientation is necessarily semi-transitive. The converse is {\em not} true. Thus, semi-transitive orientations generalize transitive orientations.

\begin{lemma}\label{lem-tran-orie}[\cite{kitaev2021word}, Lemma 4.1] Let $K_m$ be a clique in a graph $G$. Then, any acyclic orientation of $G$ induces a transitive orientation on $K_m$. In particular, any semi-transitive orientation of $G$ induces a transitive orientation on $K_m$. In either case, the orientation induced on $K_m$ contains a single source and a single sink. \end{lemma}

A key result in the theory of word-representable graphs is the following theorem. 

\begin{theorem}[\cite{halldorsson2016semi}, Theorem 3]\label{key-thm} A graph is word-representable if and only if it admits a semi-transitive orientation. \end{theorem}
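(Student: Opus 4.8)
The statement is an equivalence, and I would handle its two implications by quite different methods.

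\emph{From a word to an orientation.} Suppose $w$ represents $G$. I would first replace $w$ by a uniform representant, which is harmless since every word-representable graph admits one (this follows from the facts about $k$-uniform words quoted above, together with Observation~\ref{pw}). In a uniform word adjacent letters alternate, so their occurrences strictly interleave; hence ``the first occurrence of $x$ precedes the first occurrence of $y$'' agrees with ``the $i$-th occurrence of $x$ precedes the $i$-th occurrence of $y$'' for all $i$. Orient each edge $xy$ from $x$ to $y$ precisely when $x$ precedes $y$ in this sense. Acyclicity is immediate, the initial permutation $\pi(w)$ being a topological order. For shortcut-freeness I would argue by contradiction: if a shortcut exists, its vertex set is a semi-cycle $v_0\to v_1\to\cdots\to v_m$ ($m\ge 3$) together with the shortcutting edge $v_0v_m$, and it contains a non-adjacent pair $v_a,v_b$ with $a<b$ (so $\{a,b\}\neq\{0,m\}$, since $v_0v_m\in E$). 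The directed path forces $v_0,\ldots,v_m$ to occur in this left-to-right order in every ``round'' of $w$, while $v_0$ and $v_m$ alternate. Non-adjacency of $v_a,v_b$ gives a factor $v_av_a$ or $v_bv_b$ in $w_{\{v_a,v_b\}}$; inserting between those two occurrences the forced occurrences of the intermediate path vertices and then comparing with the positions of $v_0$ and $v_m$ (pinned by $v_0v_mv_0v_m\cdots$) yields a violated adjacency or non-adjacency, a contradiction. Hence the orientation is acyclic and shortcut-free, i.e.\ semi-transitive.

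\emph{From an orientation to a word.} I would use induction on $n=|V(G)|$, the cases $n\le 1$ being trivial. Fix a semi-transitive orientation of $G$. A basic remark is that every induced subdigraph of $G$ is again semi-transitively oriented: it is acyclic, and a shortcut in it would be a shortcut of $G$, because shortcuts are defined on induced subdigraphs. Acyclicity yields a source $v$; by the induction hypothesis $G-v$ is represented by a word $w'$, which I would put into a convenient normal form first (e.g.\ a uniform word, or $\pi(w')w'$, using Observation~\ref{pw}). Inserting copies of the single new letter $v$ into $w'$ cannot change whether two letters of $G-v$ alternate, so the rest of $G$ stays represented; the task is to place these copies so that $v$ alternates with every vertex of $N(v)=N^+(v)$ and with no other vertex. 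The placement is dictated by shortcut-freeness; for instance, if $x,y\in N(v)$ and $z$ lies strictly on a directed path from $x$ to $y$, then $z\in N(v)$ as well --- otherwise the vertices $v,x,\ldots,z,\ldots,y$, carrying the path $v\to x\to\cdots\to z\to\cdots\to y$ and the edge $v\to y$, would induce a shortcut. Structural facts of this kind synchronise the occurrences of the neighbours of $v$ inside $w'$ enough to reveal ``windows'' into which a copy of $v$ may be slipped so that exactly $N(v)$ is read once between consecutive copies of $v$; assembling the windows gives the required word.

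\textbf{Main obstacle.} Essentially all the work is in the second implication, and in particular in the reinsertion step: pinning down the positions of the copies of $v$ and verifying that $v$ ends up alternating with exactly $N(v)$. This is where semi-transitivity is used in an essential way, acyclicity alone being insufficient (non-word-representable graphs do admit acyclic orientations). I expect the genuine difficulty to lie in (i) the bookkeeping showing the ``windows'' always exist and are compatible, and (ii) possibly strengthening the induction hypothesis to control the shape of the representing word --- its uniformity, or its initial and final permutations --- so that the insertion is guaranteed to work. A smaller, self-contained obstacle is the positional case analysis in the first implication that rules out shortcuts.
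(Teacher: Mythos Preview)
This theorem is not proved in the present paper at all; it is quoted as background from Halld\'orsson--Kitaev--Pyatkin~\cite{halldorsson2016semi}, so there is no ``paper's own proof'' to compare your proposal against.

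On the substance of your sketch: the first implication (word $\Rightarrow$ semi-transitive orientation) is the standard argument and your outline is correct, including the idea of passing to a uniform word and orienting by position of first occurrences. For the converse, however, your plan has a real gap that you yourself identify. The inductive scheme ``remove a source $v$, represent $G-v$ by a word $w'$, then insert copies of $v$'' is not how the result is established in~\cite{halldorsson2016semi}, and for good reason: a generic representant $w'$ of $G-v$ gives you no control over where the letters of $N(v)$ sit relative to one another, so there is no guarantee that compatible ``windows'' exist into which $v$ can be inserted. Your observation that $N(v)$ is closed under taking intermediate vertices on directed paths is correct and necessary, but it is far from sufficient to force the required interleaving pattern in an \emph{arbitrary} inductively produced word. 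The published proof avoids this by building the entire word at once from a topological order of the semi-transitive orientation, producing an explicit $k$-uniform representant (with $k$ linear in $|V|$) via a concrete algorithm rather than vertex-by-vertex insertion. If you want to make an inductive argument work, you would need to strengthen the induction hypothesis substantially---carrying not just ``some word represents $G-v$'' but a word of a very specific combinatorial shape tied to the orientation---and at that point you are essentially reconstructing the direct algorithm anyway.
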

\begin{corollary}([\cite{broere2018word},  Corollary 5.13])\label{corcart}
    The Cartesian product $K_n\square K_2$ has representation number $n$ for $n\in \{1, 2\}$, and representation number $3$ for all $n > 2$.
\end{corollary}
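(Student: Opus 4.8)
My plan is to split the argument into the regimes $n=1$, $n=2$, and $n\ge 3$, and in the main case $n\ge 3$ to prove separately that the representation number is $\ge 3$ and $\le 3$.

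The two small cases are immediate. The graph $K_1\square K_2=K_2$ is complete, hence $1$-representable (e.g.\ by the word $12$) and of representation number $1$; and $K_2\square K_2$ is the $4$-cycle $C_4$, which is not complete (so not $1$-representable) but is a circle graph, hence $2$-representable, giving representation number $2$. Here I would invoke the standard facts that a graph has representation number $1$ exactly when it is complete, and representation number at most $2$ exactly when it is a circle graph.

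For $n\ge 3$ write the two cliques as $A=\{a_1,\dots,a_n\}$ and $B=\{b_1,\dots,b_n\}$, with matching edges $a_ib_i$ and no other edges between $A$ and $B$. For the lower bound I would first reduce to $n=3$: the induced subgraph on $\{a_1,a_2,a_3,b_1,b_2,b_3\}$ is the triangular prism $K_3\square K_2$, and since deleting letters from a $k$-uniform word representing a graph yields a $k$-uniform word representing any induced subgraph, it suffices to show $K_3\square K_2$ is not $2$-representable, i.e.\ not a circle graph (it is not complete, so this yields representation number $\ge 3$). To show the prism is not a circle graph, suppose it has a chord diagram. The three pairwise-crossing chords $a_1,a_2,a_3$ must have their endpoints in cyclic order $a_1a_2a_3a_1a_2a_3$, cutting the circle into six arcs $A_1,\dots,A_6$; using that $b_i$ crosses $a_i$ but neither of the other two $a_j$'s, one shows the two endpoints of $b_i$ must lie in one particular pair among two candidate pairs of consecutive arcs. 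A finite case check over the $2^3$ choices then shows $b_1,b_2,b_3$ can never be made pairwise crossing, a contradiction. This case analysis, together with setting up the six-arc picture cleanly, is the main obstacle of the lower bound.

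For the upper bound I would construct an explicit $3$-uniform word. Because $A$ and $B$ are cliques, in any word representing the graph the subword on $A$ must (after relabelling) equal $(a_1\cdots a_n)^3$ and the subword on $B$ must equal $(b_1\cdots b_n)^3$, since the six occurrences of any two clique vertices must alternate, forcing a common relative order in the three ``rounds''. So the word is a shuffle of these two, and the task is to interleave them so that each $a_i$ alternates with $b_i$ but is ``bunched'' against every $b_j$ with $j\ne i$. A shuffle that is a concatenation of three full permutations of $A\cup B$ provably cannot do this for $n\ge 3$: such a word makes $a_i\sim b_j$ depend on $i,j$ only through a threshold on $i$, which cannot single out $i=j$ for an interior index $j$; hence a genuinely non-periodic interleaving is needed. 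For $n=3$ one such word is
\[
a_1\,b_1a_2a_3\,a_1\,b_2a_2b_3a_3b_1\,a_1\,b_2a_2b_3b_1b_2a_3b_3,
\]
and an analogous ``nested-window'' interleaving works for every $n\ge 3$; checking all adjacencies is routine but somewhat lengthy, and producing and verifying this construction for general $n$ is the other substantial step.
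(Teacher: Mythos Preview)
The paper does not prove this statement at all: it is quoted verbatim as a preliminary result from \cite{broere2018word} and used later without argument. So there is no ``paper's own proof'' to compare against, and your proposal stands or falls on its own merits.

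Your overall strategy is the right one, and most of it is fine. The cases $n=1,2$ are correct; the lower bound via ``the prism $K_3\square K_2$ is not a circle graph'' is the standard route and your chord--diagram sketch can be completed (the case analysis is small). Your explicit $3$-uniform word for $n=3$ checks out.

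The genuine gap is the upper bound for general $n\ge 3$. You do not actually give the ``analogous nested-window interleaving''; you only assert that one exists and call producing and verifying it ``the other substantial step.'' That is precisely the content of the result you are trying to prove: exhibiting, for every $n$, a $3$-uniform word in which each $a_i$ alternates with $b_i$ but with no $b_j$ for $j\ne i$, while both $A$- and $B$-subwords are $(a_1\cdots a_n)^3$ and $(b_1\cdots b_n)^3$. Your $n=3$ word does not display an obvious inductive or parametric pattern that scales, and your own side remark (that no concatenation of three permutations of $A\cup B$ can work) shows the construction must be somewhat delicate. Until you write down an explicit word $w_n$ (or, alternatively, exhibit a semi-transitive orientation and invoke the general bound that $3$-colourable semi-transitively orientable graphs are $3$-representable, which is how Broere proceeds), the upper bound for $n\ge 4$ is unproved. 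Either supply the general word with a verification, or switch to the orientation-based argument.
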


\begin{definition}([\cite{kitaev2015words}],  Definition 3.4.1.)
    A graph $G$ with the vertex set $V = \{1,\ldots, n\}$ is permutationally
representable if it can be represented by a word of the form $p_1\cdots p_k$, where $p_i$ is a
permutation of $V$ for $1 \leq i \leq k$. If G can be represented permutationally involving
$k$ permutations, we say that $G$ is permutationally $k$-representable.
\end{definition}
\begin{theorem}([\cite{gilmore1964characterization}], Theorem 1.)\label{compchar}
     A graph G is a comparability graph if and only if each odd cycle
has at least one triangular chord.
\end{theorem}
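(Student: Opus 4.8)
Recall that $G$ is a comparability graph exactly when it admits a \emph{transitive orientation} $F$, i.e.\ an acyclic orientation in which $a\to b$ and $b\to c$ always force $a\to c$. The forward implication is short: given such an $F$ and an odd cycle $C=v_1v_2\cdots v_{2k+1}v_1$, if some $v_i$ is a \emph{pass-through} of $C$, say $v_{i-1}\to v_i\to v_{i+1}$, then transitivity yields the edge $v_{i-1}v_{i+1}\in E$, which is a triangular chord, and we are done; otherwise every $v_i$ is a local source or a local sink of $C$ under $F$, but sources and sinks must strictly alternate along $C$, forcing $|C|$ even, a contradiction. Hence every odd cycle of $G$ has a triangular chord.

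For the converse the plan is to use the \emph{forcing relation} $\Gamma$ on arcs. Introduce both arcs $(a,b),(b,a)$ for each edge $ab\in E$, and set $(a,b)\,\Gamma\,(a,c)$ whenever $\{b,c\}\notin E$, together with the symmetric rule $(b,a)\,\Gamma\,(c,a)$ whenever $\{b,c\}\notin E$. Let $\Gamma^{*}$ be the reflexive--transitive closure; it partitions the arcs into \emph{implication classes}, and because $\vec e\,\Gamma\,\vec f\iff\vec e^{\,-1}\,\Gamma\,\vec f^{\,-1}$, the reverse $A^{-1}$ of an implication class $A$ is again an implication class, so either $A=A^{-1}$ or $A\cap A^{-1}=\emptyset$. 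I would then split into two statements: (i) if no implication class is self-reversed, then $G$ is a comparability graph; (ii) if some implication class is self-reversed, then $G$ contains an odd cycle with no triangular chord. Part (i) is the classical Gallai--Golumbic argument: orient $G$ by choosing, in each connected component, one class out of each complementary pair $\{A,A^{-1}\}$, and verify via the \emph{Triangle Lemma} (arcs of a common triangle that are $\Gamma^{*}$-related to chosen arcs are forced to be directed consistently) that the union of the chosen arcs is a well-defined transitive orientation; this is routine once the Triangle Lemma is available.

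The substance --- and the step I expect to be the main obstacle --- is part (ii), which is the contrapositive of the direction we still need. Pick an edge $xy$ with $(x,y),(y,x)$ in the same implication class and a $\Gamma$-chain $(x,y)=\vec e_0,\vec e_1,\ldots,\vec e_m=(y,x)$ of minimum length. Every step keeps one endpoint of the current arc fixed and slides the other endpoint across a non-edge of $G$; tracking the fixed endpoints produces a closed walk $W$ of length $m$ in $G$ whose distance-two pairs are exactly the non-edges created at the successive turns, so $W$ has no short chord, and one shows $m$ must be odd by a parity count of how the head/tail coordinates are interchanged between $(x,y)$ and $(y,x)$. The delicate part is then to reduce $W$ --- using minimality of the chain (to rule out shortcuts and repeated vertices) and the Triangle Lemma --- to a genuine simple odd cycle of $G$ that still has no triangular chord, thereby contradicting the hypothesis; handling the case analysis for repeated vertices and non-alternating slides without destroying the parity or introducing a triangular chord is where the real care is needed, and it can alternatively be organized through Gallai's structural description of $\Gamma$-classes.
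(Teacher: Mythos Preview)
The paper does not prove this theorem at all: it is quoted as a preliminary result from Gilmore and Hoffman (1964) and then only \emph{applied} (in Lemma~\ref{lm21}) to show that $\overline{H_{n,n}}$ is not a comparability graph by exhibiting a specific odd cycle with no triangular chord. So there is no ``paper's own proof'' to compare your attempt against.

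That said, your sketch is the standard route. The forward direction is clean and correct, though be mindful that in the Gilmore--Hoffman statement the word ``cycle'' means a closed walk with no repeated oriented edge (vertices may repeat), as the paper itself spells out right after the theorem; your source/sink alternation argument still goes through verbatim in that setting. For the converse you correctly identify the forcing relation $\Gamma$, implication classes, and the Triangle Lemma as the engine, and you are honest that the real work is extracting from a self-reversed implication class an odd cycle with no triangular chord. One caution: the closed walk $W$ you build from a minimal $\Gamma$-chain has the right parity, but the reduction to a cycle in the Gilmore--Hoffman sense (no repeated edge) while preserving the absence of triangular chords is genuinely subtle; minimality of the chain controls some shortcuts but not all vertex repetitions, and this is exactly where the original proofs (Gilmore--Hoffman directly, or Gallai's structural analysis, or Golumbic's treatment via the Triangle Lemma) spend their effort. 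Your outline is accurate as a plan, but what you have written for part~(ii) is a description of the obstacle rather than its resolution.
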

For Theorem \ref{compchar}, a cycle of a graph, $G$ is defined as any
finite sequence of vertices $a_1, a_2,\ldots , a_k$ of $G$ such that all of the edges $(a_i, a_{i+i})$, $1 < i < k-1$, and the edge $(a_k, a_1)$ are in $G$, and for no vertices $a$ and $b$
and integers $i,j < k$, $i\neq j$ , is $a = a_i=a_j$, $b = a_{i+1} = a_{j+1}$ or $a = a_i= a_k$,
$b = a_{i+i} = a_1$. 
\begin{theorem}(\cite{kitaev2015words}, Theorem 3.4.3.)\label{thmcomp}
 A graph is permutationally representable if and only if it is a comparability graph.
\end{theorem}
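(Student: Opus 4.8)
The plan is to prove both directions via the classical characterization of comparability graphs as those graphs admitting a transitive orientation, together with the dictionary between orderings/permutations and partial orders. First I would recall that a graph $G$ is a comparability graph precisely when its edge set can be transitively oriented, i.e. there is a partial order $P$ on $V$ whose comparability relation is exactly $E(G)$; by Theorem \ref{compchar} this class is the same as the class of graphs in which every odd cycle has a triangular chord, so it suffices to work with transitive orientations. The two implications I will establish are: (1) if $G$ is permutationally representable then $G$ is a comparability graph; (2) if $G$ is a comparability graph then $G$ is permutationally representable (in fact permutationally $k$-representable for some $k$).

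For direction (1), suppose $w = p_1 p_2 \cdots p_k$ represents $G$, with each $p_i$ a permutation of $V$. The key observation is that for any two letters $x,y$, if $x$ precedes $y$ in \emph{every} permutation $p_i$, then $x$ and $y$ alternate in $w$ (the subword $w_{\{x,y\}}$ is $xyxy\cdots xy$), hence $xy \in E$; conversely if $xy \in E$ then $x$ and $y$ alternate and so their relative order is the same in each $p_i$ (otherwise two consecutive blocks would produce $xyyx$ or $yxxy$). Thus I can define an orientation of $G$ by $x \to y$ whenever $xy \in E$ and $x$ precedes $y$ in $p_1$ (equivalently, in all $p_i$). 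I then check this orientation is transitive: if $x\to y$ and $y\to z$ are edges, then $x$ precedes $y$ and $y$ precedes $z$ in every $p_i$, so $x$ precedes $z$ in every $p_i$; since each $p_i$ is a permutation, $w_{\{x,z\}} = xzxz\cdots$ is alternating, hence $xz\in E$ and the edge is oriented $x\to z$. So $G$ has a transitive orientation and is a comparability graph.

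For direction (2), let $G$ be a comparability graph with transitive orientation giving a partial order $P=(V,<)$. I want to realize $P$ as an intersection of linear extensions: by a standard order-theoretic fact (Dushnik–Miller / Szpilrajn), $P$ equals the intersection of finitely many linear extensions $L_1,\dots,L_k$ of $P$, meaning $x<_P y$ iff $x<_{L_i} y$ for all $i$. Let $p_i$ be the permutation of $V$ listing its elements in the order $L_i$, and set $w = p_1\cdots p_k$. For $x\neq y$: if $xy\in E(G)$ then $x,y$ are $P$-comparable, say $x<_P y$, so $x$ precedes $y$ in every $L_i$, hence $w_{\{x,y\}}=xyxy\cdots$ is alternating. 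If $xy\notin E(G)$ then $x,y$ are $P$-incomparable, so there exist indices $i,j$ with $x<_{L_i}y$ but $y<_{L_j}x$; examining the factor of $w$ straddling $p_i$ and $p_j$ (or any adjacent pair witnessing the order reversal, after possibly cyclic considerations) exhibits a non-alternating factor such as $xy\cdots yx$ with no $x$ in between, so $x$ and $y$ do not alternate. Hence $w$ represents $G$, and $G$ is permutationally $k$-representable.

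The main obstacle — really the only non-bookkeeping point — is the careful verification in direction (2) that incomparable pairs genuinely fail to alternate in $w=p_1\cdots p_k$: one must argue that an order reversal between some $L_i$ and $L_j$ forces an explicit bad factor ($xxy$, $xyy$, etc.) somewhere in the concatenation, which requires attention to whether the reversing permutations are adjacent and, if not, a short argument (or a preliminary reordering of the $L_i$, or an appeal to Proposition \ref{pr1} on cyclic shifts) to reduce to the adjacent case. I would also note as a minor point that one should record which direction of Theorem \ref{compchar} is being invoked, but for the proof itself only the transitive-orientation description of comparability graphs is needed, so I would phrase the argument entirely in those terms and cite Theorem \ref{compchar} merely to connect with the odd-cycle formulation used elsewhere in the paper.
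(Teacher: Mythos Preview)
The paper does not supply its own proof of this theorem; it is quoted verbatim from \cite{kitaev2015words} (Theorem 3.4.3 there) and used only as a black box, so there is no in-paper argument to compare against.

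Your proof is the standard one and is correct in both directions. One small remark: in direction (2) you flag the non-alternation of incomparable pairs as ``the main obstacle'' and suggest possibly reordering the $L_i$ or invoking Proposition~\ref{pr1} on cyclic shifts, but none of that is needed. If $x$ and $y$ are $P$-incomparable, then their relative order is not constant across $L_1,\dots,L_k$; hence there is some index $m$ with $1\le m<k$ such that the relative order of $x,y$ in $L_m$ differs from that in $L_{m+1}$. Restricting $p_m p_{m+1}$ to $\{x,y\}$ then yields $xyyx$ or $yxxy$, which already witnesses non-alternation in $w$. So the step you singled out is in fact immediate.
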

 \begin{theorem} (\textit{\cite{kitaev2015words}, Theorem 3.4.7.}) \label{tm}
     Let $n$ be the number of vertices in a graph $G$ and $x \in V (G)$
 be a vertex of degree $n-1$ (called a dominant or all-adjacent vertex). Let $H = G\setminus {x}$ be the graph obtained from $G$ by removing $x$ and all edges incident to it. Then $G$ is word-representable if and only if $H$ is permutationally representable.
 \end{theorem}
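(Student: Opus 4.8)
The plan is to prove the two implications separately; the reverse direction is a short explicit construction, while the forward direction rests on a structural analysis of a $k$-uniform word representing $G$. For the ``if'' direction, suppose $H$ is permutationally representable, say $H$ is represented by $P=p_1p_2\cdots p_k$ with each $p_i$ a permutation of $V(H)=V(G)\setminus x$. I would take $w=x\,p_1\,x\,p_2\,x\cdots x\,p_k$ and verify it represents $G$: for $u,v\in V(H)$ the subword $w_{\{u,v\}}$ is exactly $P_{\{u,v\}}$, so $u$ and $v$ alternate in $w$ iff $uv\in E(H)$ iff $uv\in E(G)$ (as $H$ is the induced subgraph $G\setminus x$); and for $v\in V(H)$ the subword $w_{\{x,v\}}=xvxv\cdots xv$ is alternating, matching the all-adjacency of $x$. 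Hence $w$ represents $G$.

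For the ``only if'' direction, assume $G$ is word-representable. Using the standard fact (see \cite{kitaev2015words}) that a word-representable graph admits a $k$-uniform representant for some $k$, fix such a word $w$ for $G$ and write $w=w_0\,x\,w_1\,x\cdots x\,w_k$ with each $w_i$ free of $x$. Since $x$ is all-adjacent it alternates with every other letter $y$, and I would draw two consequences: (i) $y$ occurs at most once in each block $w_i$, since a double occurrence produces the factor $yy$ in $w_{\{x,y\}}$; (ii) the unique block from which $y$ is absent (exactly one, since $y$ occurs $k$ times among the $k+1$ blocks) must be $w_0$ or $w_k$, since otherwise $w_{\{x,y\}}$ contains the factor $xx$. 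Thus $w_1,\dots,w_{k-1}$ are permutations of $V(H)$, $w_0$ is a permutation of $B:=\{y:y\text{ avoids }w_k\}$, $w_k$ is a permutation of $A:=\{y:y\text{ avoids }w_0\}$, and $A\sqcup B=V(H)$.

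The crux is then to show that $u:=(w_k w_0)\,w_1\,w_2\cdots w_{k-1}$, a concatenation of $k$ permutations of $V(H)$, represents $H$ permutationally. For a pair $y,z\in V(H)$ I would compare $w_{\{y,z\}}$ with $u_{\{y,z\}}$, splitting into the cases $\{y,z\}\subseteq A$, $\{y,z\}\subseteq B$, and $y\in B,\ z\in A$ (up to symmetry), and check in each that $u_{\{y,z\}}$ alternates precisely when $w_{\{y,z\}}$ does; since $w$ represents $G$, that is precisely when $yz\in E(G)$, i.e.\ when $yz\in E(H)$, so $u$ represents $H$. I expect this case analysis to be the main obstacle. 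The delicate point is that the prepended permutation must be $w_k w_0$, with the $A$-block \emph{before} the $B$-block: an $A$-letter occurs late in $w$ (it misses $w_0$) and a $B$-letter early (it misses $w_k$), and only this order keeps the mixed pairs $y\in B,\ z\in A$ consistent; coarser choices — keeping only $w_1\cdots w_{k-1}$, or prepending $w_0 w_k$ instead — introduce spurious adjacencies or non-adjacencies. The degenerate possibilities $A=\emptyset$, $B=\emptyset$, or $k=1$ (where $H$ turns out to be complete) are covered by the same formula and need no separate treatment.

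Finally, I should note an alternative route to the forward direction via semi-transitive orientations (Theorem \ref{key-thm}): rotate a uniform representant using Proposition \ref{pr1} so that it begins with $x$, making $x$ a source in the induced ``precedes'' orientation of $G$ (which is semi-transitive by the standard argument behind Theorem \ref{key-thm}); then any semi-transitive orientation of $G$ in which $x$ is a source restricts to a transitive orientation of $H$, because a violation $a\to b\to c$ with $a\not\to c$ in $H$ would, together with $x$, produce either a shortcut on $\{x,a,b,c\}$ or a directed triangle. Hence $H$ is a comparability graph and so permutationally representable by Theorem \ref{thmcomp}. I would nonetheless keep the word-based argument as the primary one, since it is self-contained.
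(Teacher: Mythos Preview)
The paper does not prove this statement: Theorem~\ref{tm} is quoted verbatim from \cite{kitaev2015words} as a preliminary result and is used later (in the proof that $\overline{G_1}$ is non-word-representable) without any argument supplied. So there is no ``paper's own proof'' to compare against.

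That said, your proposal is correct and is essentially the standard argument for this result. The ``if'' direction is exactly right. For the ``only if'' direction your block decomposition $w=w_0xw_1x\cdots xw_k$ and the deductions (i)--(ii) are sound, and the key construction $u=(w_kw_0)w_1\cdots w_{k-1}$ works for precisely the reason you identify: in each of the three cases the restriction $u_{\{y,z\}}$ is a cyclic shift of $w_{\{y,z\}}$ (trivial shift when $\{y,z\}\subseteq B$, a shift by two letters when $\{y,z\}\subseteq A$, and a shift by one letter in the mixed case), and cyclic shifts of balanced two-letter words preserve alternation. Your remark that the order $w_kw_0$ rather than $w_0w_k$ is essential is on point; the latter would destroy the mixed case. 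The alternative route via Theorem~\ref{key-thm} and Theorem~\ref{thmcomp} that you sketch is also valid and is in fact closer in spirit to how the result is sometimes presented.
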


In the paper, $w=w_1w_2\cdots w_n$ denotes the word $w$ contains $\{w_1,w_2,\ldots, w_n\}$ as factors where $w_i$ is a word possibly empty.
\noindent 
\section{Complement of some known bipartite graph and their word-representation}\label{sc2}

We are denoting a bipartite graph as $B(X,Y)$, where $X$ and $Y$ are two independent set of the graph $B(X,Y)$, and $|X|=m$, $|Y|=n$. We also denote the complement of the bipartite graph, i.e. co-bipartite graph as $\overline{B(K_m,K_n)}$ where $V(K_m)=V(X)$ and $V(K_n)=V(Y)$. The $T_1$ graph is a bipartite graph, but in the paper \cite{chen2016word}, the complement of the $T_1$ graph, $\overline{T_1}$ is shown as a non-word-representable graph.

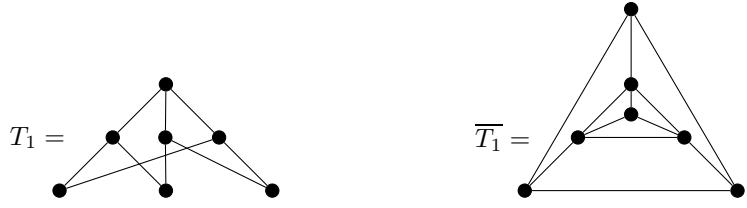
\begin{figure}[h]
\begin{center}
\begin{tabular}{ccccc}
\begin{tikzpicture}[node distance=1cm,auto,main node/.style={fill,circle,draw,inner sep=0pt,minimum size=5pt}]

\node[main node] (1) {};
\node[main node] (2) [below left of=1] {};
\node[main node] (3) [below right of=1] {};
\node[main node] (4) [below left of=2] {};
\node[main node] (5) [below right of=2] {};
\node[main node] (6) [below right of=3] {};
\node[main node] (7) [right of=2, xshift=-.3cm] {};

\node (8) [left of=2] {$T_1=$};

\path
(1) edge (2)
(1) edge (7)
(1) edge (3)
(2) edge (4)
(2) edge (5)
(7) edge (5)
(7) edge (6)
(3) edge (6)
(3) edge (4);

\end{tikzpicture}
& 

\ \ \ \ \ \ \ \ \ \ \ \ \ \ \

&

\begin{tikzpicture}[node distance=1cm,auto,main node/.style={fill,circle,draw,inner sep=0pt,minimum size=5pt}]

\node [main node](1) {};
\node [main node](2) [ below left of=1] {};
\node [main node](3) [ below right of=1] {};
\node [main node](4) [ above of=1] {};
\node [main node](5) [ below left of=2] {};
\node [main node](6) [ below right of=3] {};
\node [main node](7) [below of =1, yshift=6mm]{};
\node (8) [left of=2] {$\overline{T_1}=$};

\draw (1) -- (2);
\draw (2) -- (3);
\draw (1) -- (3);
\draw (4) -- (5);
\draw (5) -- (6);
\draw (4) -- (6);
\draw (1) -- (4);
\draw (2) -- (5);
\draw (3) -- (6);
\draw (1) -- (7);
\draw (2) -- (7);
\draw (3) -- (7);

\end{tikzpicture}
\end{tabular}

\caption{\label{nonRepTri1} The minimal (by the number of vertices) non-word-representable co-bipartite graphs $\overline{T_1}$}
\end{center}
\end{figure}

The $T_2$ graph is a bipartite graph(tree), but in the paper \cite{kitaev2024human}, the complement of $T_2$ graph, $\overline{T_2}$ is shown as a non-word-representable graph.

\begin{figure}[h]
\begin{center}
\begin{tabular}{ccccc}
\begin{tikzpicture}[node distance=1cm,auto,main node/.style={fill,circle,draw,inner sep=0pt,minimum size=5pt}]

\node[main node] (1) {};
\node[main node] (2) [below of=1] {};
\node[main node] (3) [below of=2] {};
\node[main node] (4) [below of=3] {};
\node[main node] (5) [right of=1] {};
\node[main node] (6) [below of=5] {};
\node[main node] (7) [below of=6] {};

\node (8) [left of=2] {$T_2=$};

\path
(1) edge (5)
(2) edge (6)
(3) edge (7)
(4)edge (5)
(4)edge (6)
(4)edge (7);

\end{tikzpicture}
& 

\ \ \ \ \ \ \ \ \ \ \ \ \ \ \

&

\begin{tikzpicture}[node distance=1cm,auto,main node/.style={fill,circle,draw,inner sep=0pt,minimum size=5pt}]

\node [main node](1) {};
\node (2) [ below left of=1] {};
\node [main node](3) [ left of=2] {};
\node (4) [ below right of=1] {};
\node [main node](5) [ right of=4] {};
\node [main node](6) [ below of=1, yshift=-5mm] {};
\node [main node](7) [ below of=6] {};
\node (8) [ left of=7] {};
\node [main node](9) [ below left of=8] {};
\node (10) [ right of=7] {};
\node [main node](11) [ below right of=10] {};
\node (12) [below left of=3] {$\overline{T_2}=$};

\draw (1) -- (3);
\draw (1) -- (5);
\draw (1) -- (6);
\draw (3) -- (5);
\draw (3) -- (6);
\draw (5) -- (6);
\draw (7) -- (9);
\draw (7) -- (11);
\draw (9) -- (11);
\draw (3) -- (7);
\draw (3) -- (9);
\draw (5) -- (7);
\draw (5) -- (11);
\draw (6) -- (9);
\draw (6) -- (11);

\end{tikzpicture}
\end{tabular}

\caption{\label{nonRepTri2} The minimal (by the number of vertices) non-word-representable co-bipartite graphs $\overline{T_2}$}
\end{center}
\end{figure}
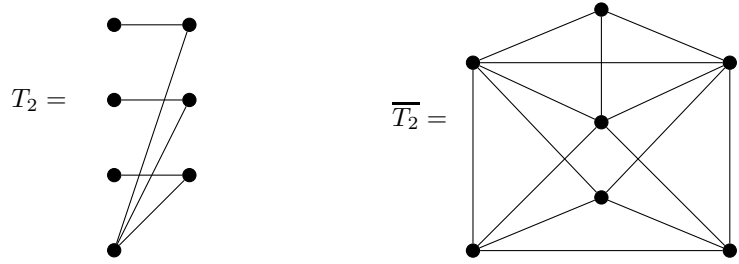
From this, a question arises for all sub-classes of bipartite graphs: Do any sub-classes exist whose complements are word-representable? We have studied some sub-classes of bipartite graphs whose complements are indeed word-representable.

First, for the complete bipartite $K_{m,n}$ graph, the complement of this graph is the union of two disjoint complete graphs of order $m$ and $n$. So, $\overline{K_{m,n}}$ is word-representable.

We found that the complement of some other classes of bipartite graphs is also word-representable.
\subsection{Complement of the path graph}
\begin{definition}
 The path graph $P_n$ is a tree that has two vertices of degree $1$, while the remaining $n-2$  vertices have degree $2$.  
\end{definition}
As $P_{2n}$ is a bipartite graph; we are partitioning the $P_n$ graph into two independent sets, $X$ and $Y$. We denote the vertex of $X$ and $Y$ as $\{1,2,\cdots, n\}$ and $\{1',2',\cdots, n'\}$. According to the definition of a path graph, the possible edges between the sets $X$ and $Y$ are in the following.
\begin{enumerate}
    \item $1\sim 1'$
    \item $i\sim (i-1)'$, $i\sim i'$ $1<i\leq n$
\end{enumerate} 
In Figure \ref{fig1}, we consider an even-length path of length $6$ and use the above notation. In that figure, set $X$ contains the vertices $\{1,2,3\}$ and set $Y$ contains the vertices $\{1',2',3'\}$. 

\begin{figure}[h]
\begin{center}
\begin{tikzpicture}[node distance=1cm,auto,main node/.style={circle,draw,inner sep=1pt,minimum size=5pt}]

\node[main node] (1) {1};
\node[main node] (2) [right of=1] {1'};
\node[main node] (3) [ below of=1] {2};
\node[main node] (4) [below of=2] {2'};
\node[main node] (5) [below of=3] {3};
\node[main node] (6) [below of=4] {3'};

\node (8) [left of=1] {$P_6=$};

\path
(1) edge (2)
(2) edge (3)
(3) edge (4)
(4) edge (5)
(5) edge (6);

\end{tikzpicture}

\caption{\label{fig1} Path graph on $6$ vertices}
\end{center}
\end{figure}
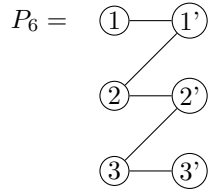
\begin{theorem}\label{thm1}
    The complement of an even-length path graph is word-representable.
\end{theorem}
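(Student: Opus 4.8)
The plan is to show that $\overline{P_{2n}}$ admits a transitive orientation; since any transitive orientation is in particular semi-transitive, Theorem \ref{key-thm} then yields word-representability (equivalently, a transitive orientation exhibits $\overline{P_{2n}}$ as a comparability graph, so one may invoke Theorem \ref{thmcomp} together with the fact that a permutational representation is a word representation).

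First I would reformulate the graph so that its structure is transparent. Walking along the even path from one end and relabelling its vertices $1,1',2,2',\dots,n,n'$ as $1,2,3,\dots,2n$, the path $P_{2n}$ becomes simply $1-2-\cdots-2n$, and hence $\overline{P_{2n}}$ is the graph on $\{1,\dots,2n\}$ in which $u$ and $v$ are adjacent exactly when $|u-v|\ge 2$ (the two cliques are then the odd-labelled and the even-labelled vertices). Now orient every edge of $\overline{P_{2n}}$ from its smaller endpoint to its larger endpoint. This orientation is obviously acyclic, and I claim it is transitive: if $u\to v$ and $v\to w$ are edges, then $u<v<w$, and since $uv$ and $vw$ are edges we have $v-u\ge 2$ and $w-v\ge 2$, so $w-u\ge 4\ge 2$, whence $uw$ is an edge and is oriented $u\to w$. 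Thus $\overline{P_{2n}}$ is a comparability graph, and Theorem \ref{key-thm} (or Theorem \ref{thmcomp}) shows it is word-representable.

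The argument is short once the vertices are put in the right linear order, so I do not expect a serious obstacle; the one point that genuinely requires care is the choice of orientation. The natural first guess — orient the two copies of $K_n$ transitively and then direct every clique-to-clique edge the same way (say, from the first clique to the second) — already fails for $\overline{P_8}$: in the original labelling the vertex set $\{1,2,3,1',3'\}$ then induces a shortcut, with source $1$, sink $3'$, Hamiltonian path $1\to 2\to 3\to 1'\to 3'$, shortcutting edge $1\to 3'$, and non-transitivity witnessed by $3\to 1'\to 3'$ with $3\not\to 3'$ (as $3$ and $3'$ are non-adjacent). What makes the bad pairs harmless is to orient every edge, clique edges and cross edges alike, consistently with the order obtained by walking along the original path; as shown above that orientation is in fact transitive, which is the cleanest way to close the proof.
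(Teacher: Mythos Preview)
Your proof is correct and takes a genuinely different route from the paper. The paper proves the theorem by writing down an explicit $2$-uniform word
\[
w = 121'32'\cdots i(i-1)'\cdots n(n-1)'n'\ \ 1'12'2\cdots i'i\cdots (n-1)'(n-1)n'n
\]
and then checking, case by case, that every edge of $\overline{P_{2n}}$ alternates in $w$ and every non-edge fails to alternate; only afterwards, in a separate corollary, do they observe that this $w$ happens to be a concatenation of two permutations and conclude that $\overline{P_{2n}}$ is a comparability graph. Your argument bypasses the explicit word entirely: relabelling the path as $1,2,\dots,2n$ turns $\overline{P_{2n}}$ into the ``distance $\ge 2$'' graph, and orienting every edge from smaller to larger label is transitive by the one-line computation $w-u\ge (w-v)+(v-u)\ge 4$. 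This is considerably shorter and more conceptual, and it yields the comparability-graph conclusion immediately rather than as an afterthought. What the paper's approach buys is an explicit $2$-word representant (hence representation number at most $2$), which your orientation argument gives only indirectly via Theorem~\ref{thmcomp}. Your closing paragraph about the failure of the ``all cross edges one way'' orientation is a nice sanity check but is not needed for the proof itself.
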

\begin{proof}
    Suppose $P_{2n}$ is an even-length path graph of order $2n$. As we have mentioned earlier, we can partition $P_{2n}$ is two independent sets $X$ and $Y$ where $V(X)=\{1,2,\cdots,n\}$ and $V(Y)=\{1',2',\cdots,n'\}$. Also, the possible edges between $X$ and $Y$ are  $1\sim 1'$, $i\sim (i-1)'$, $i\sim i'$, $1<i\leq n$. Now, the complement of path graph $P_{2n}$ contain the same vertex set $\{1,2,\cdots,n,1',2',\cdots,n'\}$. But the possible edges of $\overline{P_{2n}}$ are in the following:
    \begin{enumerate}
        \item $i\sim j$, $\forall \{i,j\}\in V(X)$, $i\neq j$.
        \item $i'\sim j'$, $\forall \{i',j'\}\in V(Y)$, $i'\neq j'$.
        \item $1\sim i'$, $1<i\leq n$
        \item $i\sim j'$, $1\leq i\leq n$, $1\leq j\leq n$, $j\neq i-1$ and $j\neq i$.
    \end{enumerate}
    Our claim is that the word $w=121'32'\cdots i(i-1)'\cdots (n-1)(n-2)'n(n-1)'n'$ $1'12'2\cdots i'i\cdots (n-1)'(n-1)n'n$ represents the graph $\overline{P_{2n}}$. To prove this, we need to show that for every edge of $\overline{P_{2n}}$, their corresponding vertices are alternating in $w$ and for every non-edge of $\overline{P_{2n}}$, their corresponding vertices are not alternating in $w$. First, we check for every edge of $\overline{P_{2n}}$ in the following.
    \begin{enumerate}
        \item As, $i\sim j$, $\forall \{i,j\}\in V(X)$, $i\neq j$, $i$ and $j$ should alternate in $w$. Now, $w_{\{1,2,\cdots,n\}}=12\cdots n12\cdots n$. So, $i$ and $j$ are alternating in $w$.
        \item As, $i'\sim j'$, $\forall \{i',j'\}\in V(Y)$, $i'\neq j'$, $i'$ and $j'$ should alternate in $w$. Now, $w_{\{1',2',\cdots,n'\}}=1'2'\cdots n'1'2'\cdots n'$. So, $i'$ and $j'$ are alternating in $w$.
        \item As, $1\sim i'$, $1<i\leq n$, $1$ and $i'$ should alternate in $w$. Now, $w_{\{1,2',\cdots,n'\}}=12'\cdots n'12'\cdots n'$. So, $1$ and $i'$ are alternating in $w$.
        \item As, $i\sim j'$, $1\leq i\leq n$, $1\leq j\leq n$, $j\neq i-1$ and $j\neq i$, $i$ and $j'$ should alternate in $w$. Now, $w_{\{i,1',\cdots (i-2)',(i+1)',\cdots,n\}}=1'\cdots (i-2)'i(i+1)'n'1'\cdots (i-2)'i(i+1)'n'$. So, $i$ and $j'$ are alternating in $w$.
    \end{enumerate}
    Now, we check for every non-edge of $\overline{P_{2n}}$ in the following.
    \begin{enumerate}
        \item As, $1\nsim 1'$, $1$ and $1'$ should not alternate in $w$. Now, $w_{\{1,1'\}}=11'1'1$. So, $1$ and $j'$ are not alternating in $w$. 
        \item As, $i\nsim (i-1)'$ and $i\nsim i'$, $1<i\leq n$, $i$ should not alternate with $(i-1)'$ and $i'$ in $w$. Now, $w_{\{i,(i-1)', i'\}}=i(i-1)'i'(i-1)'i'i$. So, $i$ is not alternating with $(i-1)'$ and $i'$ in $w$.
    \end{enumerate}
\end{proof}

\begin{corollary}
    The complement of an odd-length path graph is word-representable.
\end{corollary}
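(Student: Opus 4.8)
The plan is to reduce the odd-length case to the even-length case that has just been proved in Theorem~\ref{thm1}. An odd-length path graph $P_{2n+1}$ has $2n+1$ vertices; being bipartite, it partitions into two independent sets of sizes $n+1$ and $n$, and its complement $\overline{P_{2n+1}}$ is therefore co-bipartite with clique partition $K_{n+1}\cup K_n$. The natural idea is to observe that $P_{2n+1}$ is obtained from $P_{2n}$ by attaching one extra pendant vertex at one end of the path, so $\overline{P_{2n+1}}$ is obtained from $\overline{P_{2n}}$ by adding one new vertex adjacent to all of the old vertices except the single path-neighbour of that pendant.

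First I would set up the same labelling as in the proof of Theorem~\ref{thm1}: write $P_{2n+1}$ with $X=\{1,2,\dots,n+1\}$ and $Y=\{1',2',\dots,n'\}$, where the edges are $1\sim 1'$, and $i\sim(i-1)'$, $i\sim i'$ for $1<i\le n$, together with one more edge $(n+1)\sim n'$ at the far end. Then $\overline{P_{2n+1}}$ restricted to $\{1,\dots,n,1',\dots,n'\}$ is exactly $\overline{P_{2n}}$, and the new vertex $n+1$ is adjacent in the complement to everything except $n'$. Concretely, $n+1$ is adjacent to all of $1,\dots,n$ (same clique $K_{n+1}$) and to $1',\dots,(n-1)'$, but not to $n'$. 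So the task is to extend the word $w$ from Theorem~\ref{thm1} to a word $w^\ast$ on the alphabet $\{1,\dots,n+1,1',\dots,n'\}$ that preserves all the old alternation relations and makes $n+1$ alternate with every letter except $n'$.

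The key step is to exhibit such a $w^\ast$ explicitly and verify it, mirroring the bookkeeping in the proof above. A clean choice is to insert the letter $n+1$ into both halves of $w$ at the position right after $n$ and before $n'$ in the first half, and symmetrically after $n$ (the last letter) appropriately in the second half, so that $w^\ast_{\{1,\dots,n+1\}}=12\cdots(n+1)\,12\cdots(n+1)$ (keeping the $K_{n+1}$ clique alternating), while $w^\ast_{\{n+1,\,n'\}}$ becomes a non-alternating factor such as $(n{+}1)(n{+}1)n'n'$ or $(n{+}1)n'n'(n{+}1)$, and $w^\ast_{\{n+1,\,j'\}}=(n{+}1)j'(n{+}1)j'$ for every $j'\ne n'$. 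Since inserting a letter that occurs exactly at these two controlled spots does not disturb any relation among the old letters (one checks that between the two new occurrences of $n+1$ every other letter still appears exactly once, so Proposition~\ref{pr2} applies), the restriction of $w^\ast$ to the old alphabet still represents $\overline{P_{2n}}$, and the new pairwise relations come out as required.

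Alternatively, and perhaps more transparently for the write-up, one can just write the analogue of the Theorem~\ref{thm1} word directly: $w^\ast = 1\,2\,1'\,3\,2'\cdots (n{+}1)(n{-}1)'\,n'\;\; 1'\,1\,2'\,2\cdots(n{-}1)'\,(n{-}1)\,n'\,n\,(n{+}1)$, and run the same four-case edge check and two-case non-edge check as in the proof above, with the extra vertex $n+1$ handled as its own case. The main obstacle I anticipate is purely notational: getting the insertion points of $n+1$ exactly right so that the factor $w^\ast_{\{n+1,n'\}}$ is non-alternating while simultaneously $w^\ast_{\{n+1,j'\}}$ is alternating for all $j<n$ and $w^\ast_{\{n+1,i\}}$ is alternating for all $i\le n$; once the word is written down, the verification is the same routine restriction-to-two-or-three-letters argument already used for the even case, so no genuinely new idea is needed — which is presumably why the authors state it as a corollary.
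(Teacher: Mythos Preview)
Your strategy is sound, but it takes a genuinely different and considerably longer route than the paper's. You go \emph{up} from $\overline{P_{2n}}$ to $\overline{P_{2n+1}}$ by adjoining a new vertex $n{+}1$ and extending the representing word. The paper instead goes \emph{down}: it observes that $P_{2n-1}$ is the induced subgraph of $P_{2n}$ obtained by deleting the end-vertex $n'$, so $\overline{P_{2n-1}}$ is an induced subgraph of $\overline{P_{2n}}$; since word-representability is hereditary, one simply erases every occurrence of $n'$ from the word $w$ of Theorem~\ref{thm1} and is done in one line. Your approach buys nothing extra here, while the deletion argument avoids all the insertion bookkeeping you anticipate.

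There is also a concrete slip in your explicit word. In the first half you write ``$\cdots (n{+}1)(n{-}1)'\,n'$'', which places $n{+}1$ immediately \emph{before} $(n{-}1)'$. With that placement, $w^\ast_{\{n{+}1,(n{-}1)'\}}=(n{+}1)(n{-}1)'(n{-}1)'(n{+}1)$, which is non-alternating, whereas in $\overline{P_{2n+1}}$ the vertex $n{+}1$ \emph{is} adjacent to $(n{-}1)'$. The correct insertion point is one slot later, between $(n{-}1)'$ and $n'$, so that the first half ends $\cdots n\,(n{-}1)'\,(n{+}1)\,n'$; then all of your claimed restrictions hold. Equivalently, this is exactly the word for $\overline{P_{2(n+1)}}$ from Theorem~\ref{thm1} with $(n{+}1)'$ deleted---which is, of course, just the paper's argument applied with $n$ replaced by $n{+}1$.
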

\begin{proof}
    According to Theorem \ref{thm1}, the complement of an even-length path graph, $\overline{P_{2n}}$ is word-representable. As word-representable graphs are hereditary, removing the vertex $n'$ from the word $w$ mentioned in Theorem \ref{thm1} provides the word-representation for an odd-length path graph $P_{2n-1}$. 
\end{proof}
\begin{corollary}
    $\overline{P_{2n}}$ is comparability graph.
\end{corollary}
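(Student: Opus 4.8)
The plan is to use Theorem \ref{thmcomp}, which states that a graph is permutationally representable if and only if it is a comparability graph, and to observe that the word $w$ constructed in the proof of Theorem \ref{thm1} is already almost of the required form. Indeed, recall that $w = w_1 w_2$ where
$w_1 = 1\,2\,1'\,3\,2'\cdots i\,(i-1)'\cdots (n-1)(n-2)'\,n\,(n-1)'\,n'$ and
$w_2 = 1'\,1\,2'\,2\cdots i'\,i\cdots (n-1)'\,(n-1)\,n'\,n$.
The word $w_2$ is manifestly a single permutation of the vertex set $V = \{1,\dots,n,1',\dots,n'\}$, since each letter occurs exactly once in it. So the first step is to check that $w_1$ is also (or can be replaced by) a permutation of $V$: inspecting $w_1$, the letters $1,2,\dots,n$ each appear once and the letters $1',2',\dots,n'$ each appear once, so $w_1$ is a permutation of $V$ as well. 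Hence $w = w_1 w_2$ is already a concatenation of two permutations of $V$, i.e. $\overline{P_{2n}}$ is permutationally $2$-representable.

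The second step is simply to invoke Theorem \ref{thmcomp}: since $\overline{P_{2n}}$ is permutationally representable (by the two-permutation word $w$ that, by Theorem \ref{thm1}, represents it), it is a comparability graph. Alternatively, and perhaps more cleanly, I would phrase it as: the word $w$ proved to represent $\overline{P_{2n}}$ in Theorem \ref{thm1} has the form $p_1 p_2$ with $p_1, p_2$ permutations of $V$, which is exactly the definition of permutational $2$-representability (Definition following Corollary \ref{corcart}), and then Theorem \ref{thmcomp} finishes the argument.

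I do not expect a genuine obstacle here; the only thing to be careful about is the bookkeeping in verifying that $w_1$ contains every letter of $V$ exactly once — one should double-check the boundary terms of the pattern $i\,(i-1)'$ (namely the start $1,2,1'$ and the end $n\,(n-1)'\,n'$) to confirm that $1'$ through $n'$ and $1$ through $n$ each occur precisely once and nothing is repeated or omitted. Once that is confirmed, the corollary follows immediately from Theorem \ref{thmcomp} with no further work. (As a sanity check one could also note that any graph with a semi-transitive orientation whose word-representant is a concatenation of permutations is permutationally representable, consistent with the known fact that comparability graphs are exactly the permutationally representable graphs.)
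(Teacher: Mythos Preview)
Your proposal is correct and matches the paper's own proof essentially line for line: the paper observes that the word $w$ from Theorem~\ref{thm1} splits as the concatenation of the two permutations $w_1$ and $w_2$ you identify, concludes that $\overline{P_{2n}}$ is permutationally representable, and then invokes Theorem~\ref{thmcomp}. The only addition in the paper is a parallel remark that the word for the odd-length case (obtained by deleting $n'$) is likewise a concatenation of two permutations, but this is not required for the stated corollary.
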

\begin{proof}
    From the word $w=121'32'\cdots i(i-1)'\cdots (n-1)(n-2)'n(n-1)'n'$ $1'12'2\cdots i'i\cdots (n-1)'(n-1)n'n$ that represents even-length path graph, we can see that it is concatenation of $2$ permutations, $121'32'\cdots i(i-1)'\cdots (n-1)(n-2)'n(n-1)'n'$ and $1'12'2\cdots i'i\cdots (n-1)'(n-1)n'n$. Also, $w=121'32'\cdots i(i-1)'\cdots (n-1)(n-2)'n(n-1)'$ $1'12'2\cdots i'i\cdots (n-1)'(n-1)n$ represents odd-length path graph, and it is also the concatenation of $2$ permutations, $121'32'\cdots i(i-1)'\cdots (n-1)(n-2)'n(n-1)'$ and $1'12'2\cdots i'i\cdots (n-1)'(n-1)n$. Therefore, $\overline{P_{2n}}$ is permutationally word-representable. So, according to Theorem \ref{thmcomp}, $\overline{P_{2n}}$ is a comparability graph.
\end{proof}
\subsection{Complement of the cycle graph}
\begin{definition}
A graph with $n$ vertices that has a single cycle of length $n$ is called a cycle graph $C_n$.
\end{definition}
As $C_{2n}$ is a bipartite graph, we are partitioning the $C_{2n}$ graph like we have done for the $P_n$ graph into two independent sets, $X$ and $Y$ and denote the vertex of $X$ and $Y$ as $\{1,2,\cdots, n\}$ and $\{1',2',\cdots, n'\}$. According to the definition of cycle graph, the possible edges between the sets $X$ and $Y$ are:
\begin{enumerate}
    \item $1\sim 1'$, $1\sim n'$
    \item  $i\sim (i-1)'$, $i\sim i'$ $1<i\leq n$
\end{enumerate}In the following figure, we consider an even length of length $6$ and use the above notation.

\begin{figure}[h]
\begin{center}
\begin{tikzpicture}[node distance=1cm,auto,main node/.style={circle,draw,inner sep=1pt,minimum size=5pt}]

\node[main node] (1) {1};
\node[main node] (2) [right of=1] {1'};
\node[main node] (3) [ below of=1] {2};
\node[main node] (4) [below of=2] {2'};
\node[main node] (5) [below of=3] {3};
\node[main node] (6) [below of=4] {3'};

\node (8) [left of=1] {$P_6=$};

\path
(1) edge (2)
(1) edge (6)
(2) edge (3)
(3) edge (4)
(4) edge (5)
(5) edge (6);

\end{tikzpicture}

\caption{\label{fig2} Cycle graph on $6$ vertices}
\end{center}
\end{figure}
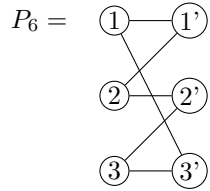

\begin{theorem}
    The complement of the even-length cycle graph is word-representable.
\end{theorem}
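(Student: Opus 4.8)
The plan is to mimic the construction used for the complement of the even-length path graph in Theorem \ref{thm1}, adjusting for the two extra non-edges that the cycle introduces. Recall that for $C_{2n}$, partitioned into independent sets $X=\{1,\ldots,n\}$ and $Y=\{1',\ldots,n'\}$, the edges between $X$ and $Y$ are exactly $1\sim 1'$, $1\sim n'$, and $i\sim(i-1)'$, $i\sim i'$ for $1<i\le n$. Hence in $\overline{C_{2n}}$ the vertices of $X$ form a clique, the vertices of $Y$ form a clique, and $i\sim j'$ holds precisely when $j'$ is \emph{not} one of the (one or two) cycle-neighbours of $i$. So the only difference from the path case is that vertex $1$ additionally loses its edge to $n'$ (equivalently, $1\sim n'$ is removed from the list of cross edges that are present in $\overline{P_{2n}}$), and correspondingly vertex $n$ still has $n\nsim (n-1)'$ and $n\nsim n'$ as in the path. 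I would first write out explicitly the edge and non-edge lists of $\overline{C_{2n}}$ in this notation, so that the verification is a finite checklist.

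Next I would propose a word $w$ obtained from the path-representant by a small local modification that breaks the alternation of the pair $\{1,n'\}$ while preserving every other alternation/non-alternation relation. A natural first attempt is to take the path word $w_P=121'32'\cdots n(n-1)'n'\,1'12'2\cdots (n-1)'(n-1)n'n$ and move one occurrence of $n'$ (or of $1$) so that the subword $w_{\{1,n'\}}$ becomes a non-alternating pattern such as $1\,1\,n'\,n'$ or $1\,n'\,n'\,1$, while $w_{\{i,n'\}}$ for $i\ne 1$ and $w_{\{1,j'\}}$ for $j'\ne n'$ stay alternating. Because $n'$ in $w_P$ sits at the boundary between the two permutation-blocks (it is the last letter of the first block and appears again in the second block), there should be room to slide it or to duplicate-and-delete in a way that only disturbs its relation with $1$. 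After fixing the candidate $w$, I would verify, exactly as in the proof of Theorem \ref{thm1}, the four families of edges ($X$-clique, $Y$-clique, $1\sim j'$ for the allowed $j'$, and $i\sim j'$ for $i\ne 1$ with $j'$ not a neighbour) and the non-edge families ($1\nsim 1'$, $1\nsim n'$, and $i\nsim(i-1)'$, $i\nsim i'$ for $1<i\le n$), each reducing to inspecting the projection $w_S$ onto a two- or three-letter set.

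The step I expect to be the main obstacle is checking that the local modification does not accidentally destroy one of the alternations involving vertex $1$ or vertex $n'$ with the \emph{other} vertices — in particular that $1$ still alternates with $2',3',\ldots,(n-1)'$ and with all of $2,\ldots,n$, and that $n'$ still alternates with all of $1',\ldots,(n-1)'$ and with $2,\ldots,n-1$ (note $n\nsim n'$ and $n\nsim(n-1)'$ must be preserved as non-alternations). If a single relocation of one letter cannot simultaneously achieve all of this, the fallback is to use Observation \ref{pw} / Proposition \ref{pr2}-style padding, or to pass to a genuinely new $3$-uniform word for $\overline{C_{2n}}$ built block-by-block; and as in the path case one should also obtain the odd-cycle complement as a hereditary (induced-subgraph) consequence, and check whether $w$ is a concatenation of permutations, which would additionally show $\overline{C_{2n}}$ is a comparability graph via Theorem \ref{thmcomp}.
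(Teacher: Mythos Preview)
Your plan is essentially the paper's proof: start from the word $w_P$ representing $\overline{P_{2n}}$ and perform a local modification to break only the alternation of $\{1,n'\}$. The paper carries this out precisely via your stated fallback: it first pads to the $3$-uniform word $w_1=\pi(w_P)\,w_P$ using Observation~\ref{pw}, and then swaps the first occurrence of $n'$ with the second occurrence of $1$ (these are adjacent in $w_1$, so the swap affects no other pair). Your ``first attempt'' of moving a single letter in the $2$-uniform $w_P$ does not quite work --- any relocation of the unique boundary copy of $n'$ or of the second $1$ disturbs at least one of the required alternations $1'\sim n'$ or the required non-alternation $1\nsim 1'$ --- so the padding step is genuinely needed, not optional.

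Two small side remarks in your last paragraph should be dropped. The odd-cycle complement is \emph{not} an induced subgraph of $\overline{C_{2n}}$ (deleting a vertex of $C_{2n}$ yields a path, not $C_{2n-1}$), so heredity does not give it for free; the paper makes no such claim. And after the swap the resulting word is no longer a concatenation of permutations, so you should not expect to conclude that $\overline{C_{2n}}$ is a comparability graph from this construction; the paper states the comparability result only for $\overline{P_{2n}}$.
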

\begin{proof}
 Suppose $C_{2n}$ is an even-length cycle graph of order $2n$. As we have mentioned earlier, we can partition $C_{2n}$ is two independent sets $X$ and $Y$ where $V(X)=\{1,2,\cdots,n\}$ and $V(Y)=\{1',2',\cdots,n'\}$. Now, the complement of cycle graph $C_{2n}$ contain the same vertex set $\{1,2,\cdots,n,1',2',\cdots,n'\}$. But the possible edges of $\overline{C_{2n}}$ are in the following:
    \begin{enumerate}
        \item $i\sim j$, $\forall \{i,j\}\in V(X)$, $i\neq j$.
        \item $i'\sim j'$, $\forall \{i',j'\}\in V(Y)$, $i'\neq j'$.
        \item $1\sim i'$, $1<i< n$
        \item $i\sim j'$, $1\leq i\leq n$, $1\leq j\leq n$, $j\neq i-1$ and $j\neq i$.
    \end{enumerate}
After removing the edge between the vertex $1$ and $n'$ from the $\overline{P_{2n}}$ graph, we can obtain $\overline{C_{2n}}$ graph. We are deriving the word-representation of $\overline{C_{2n}}$ graph from the word-representation of $\overline{P_{2n}}$ graph. According to Theorem \ref{thm1}, $w=121'32'\cdots i(i-1)'\cdots (n-1)(n-2)'n(n-1)'n'$ $1'12'2\cdots i'i\cdots (n-1)'(n-1)n'n$ is representing the graph $\overline{P_{2n}}$. According to Observation \ref{pw}, $w_1=\pi(w)w$ also represents $\overline{P_{2n}}$. Now, $w_1=121'32'\cdots i(i-1)'\cdots (n-1)(n-2)'n(n-1)'n'121'32'\cdots i(i-1)'\cdots (n-1)(n-2)'n(n-1)'n' 1'12'2\cdots i'i\cdots (n-1)'(n-1)n'n$. In the word $w_1$, we swap the first occurrence of $n'$ and the second occurrence of $1$ to obtain a new word $w_1'$.  As, $w_1'= 121'32'\cdots i(i-1)'\cdots (n-1)(n-2)'n(n-1)'1n'21'32'\cdots i(i-1)'\cdots (n-1)(n-2)'n(n-1)'n' 1'12'2\cdots i'i\cdots (n-1)'(n-1)n'n$, any alternation and non-alternation between the vertices of $\{2,3,\cdots,n,1',\cdots,(n-1)'\}$ are not changed. Therefore, we only need to check for the vertices $1$ and $n'$. Also, the alternation and non-alternation of $1$ and $n'$ with other vertices are also not changed. But, $w_{1_{\{1,n'\}}}'=11n'n'1n'$, so $1$ and $n'$ are not alternating. Hence, $w_1'$ represents the graph $\overline{C_{2n}}$.
\end{proof}

\subsection{Complement of generalized crown graph}
\begin{definition}
A crown graph $ H_{n,n} $ is a bipartite graph obtained from a complete bipartite graph $K_{n,n}$ by removing the edges of a perfect matching.
\end{definition}
Now, crown graphs $H_{n,n}$ are bipartite graphs, and the complement of the crown graph is the cartesian product of $K_n\times K_2$. According to Corollary \ref{corcart}, $\overline{H_{n,n}}$ is word-representable.
\begin{definition}[\cite{trotter1992combinatorics}]
   Generalized Crown Graph $S_{n-k}^k$ are bipartite graphs on $2n$ vertices that are obtained from a complete bipartite graph $K_{n,n}$ by removing the edges of $k+1$ number of perfect matching in a certain way. We assume the generalized crown graph $S_{n-k}^k$ are partitioned into two independent sets $X$ and $Y$, where $V(X)=\{1,2,\ldots,n\}$ and $V(Y)=\{1',2',\ldots,n'\}$. We can obtain the generalized crown graph $S_{n-k}^k$ by removing the following perfect matching from the complete bipartite graph $K_{n,n}$.
\begin{enumerate}
    \item $\forall 1\leq i\leq (n-k)$, remove the edges from $i$ to $j$, $\forall j\in \{i',(i+1)',\ldots, (i+k)'\}$.
    \item $\forall (n-k+1)\leq i\leq n$, remove the edges from $i$ to $j$, $\forall j \in \{i',((i+1)\text{mod n})',((i+2)\text{mod n})',\ldots,((i+k)\text{mod n})'\}$, where $0'=n'$. 
\end{enumerate}
\end{definition}
 From the definition, $H_{n,n}=S_n^0$.
Now, we want to find whether the complement of generalized crown graphs is word-representable or not.

The complement of a generalized crown graph $\overline{S_{n-k}^k}$ also can partition into two cliques $X$ and $Y$,  where $V(X)=\{1,2,\ldots,n\}$ and $V(Y)=\{1',2',\ldots,n'\}$. So, the graph $\overline{S_{n-k}^k}$ contains the following edges. 
\begin{enumerate}
    \item $1\leq i<j\leq n$, $i\sim j$.
    \item $1\leq i<j\leq n$, $i'\sim j'$.
    \item $\forall 1\leq i\leq (n-k)$, $i\sim j$, $\forall j\in \{i',(i+1)',\ldots, (i+k)'\}$.
    \item $\forall (n-k+1)\leq i\leq n$, $i\sim j$, $\forall j \in \{i',((i+1)\text{mod n})',((i+2)\text{mod n})',\ldots,((i+k)\text{mod n})'\}$, where $0'=n'$. 
\end{enumerate}
\begin{theorem}
    The complement of a generalized crown graph $\overline{S_{n-k}^k}$ is word-representable.
\end{theorem}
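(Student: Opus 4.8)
The plan is to mimic the strategy already used for the complement of the path graph, namely to exhibit an explicit word (built from two permutations of the vertex set) and verify that it represents $\overline{S_{n-k}^k}$. Recall the complement is co-bipartite with cliques $X=\{1,\dots,n\}$ and $Y=\{1',\dots,n'\}$, and a vertex $i\in X$ is adjacent to $j'\in Y$ precisely when $j'\notin\{i',(i+1)',\dots,(i+k)'\}$ (indices mod $n$ in the wrap-around range). So the non-edges between $X$ and $Y$ form a ``banded'' bipartite pattern: each $i$ misses exactly the $k+1$ consecutive labels $i,i+1,\dots,i+k$ (cyclically) on the $Y$ side. Since $X$ and $Y$ are cliques, inside each clique every pair must alternate, which forces each of the two blocks of the candidate word to be a permutation; so I would look for a word $w=w^{(1)}w^{(2)}$ where $w^{(1)}$ and $w^{(2)}$ are permutations of $X\cup Y$ interleaving the primed and unprimed letters.

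The key steps, in order, are: (1) Write down the candidate word. A natural choice generalizing the path-graph word is $w^{(1)} = 1\,(n{-}k{+}1)'(n{-}k{+}2)'\cdots n' 1' 2\, 2'\, 3\, 3'\cdots$ — more precisely arrange so that the cyclic block of ``forbidden'' primed letters for $i$ is packed around $i$ in $w^{(1)}$ and in the reverse relative order in $w^{(2)}$, exactly as $i(i-1)'$ appears in one block and $i'i$ (or the mirrored pattern) in the other for the path case. (2) Verify the three families of edges: $i\sim j$ for $i,j\in X$ follows because $w^{(1)}$ and $w^{(2)}$ restricted to $X$ are the same permutation, hence $X$-letters alternate; likewise for $Y$; and $i\sim j'$ when $j\notin\{i,\dots,i+k\}$ follows by checking that in the restriction $w_{\{i,j'\}}$ the two letters interleave as $i j' i j'$ or $j' i j' i$. (3) Verify the non-edges: for each $i$ and each forbidden $j'$, check $w_{\{i,j'\}}$ is one of $iij'j'$, $j'j'ii$, $ij'j'i$, $j'iij'$, i.e.\ non-alternating; here the wrap-around cases $(n-k+1)\le i\le n$ need separate bookkeeping, just as $\overline{P_{2n}}$ needed the special treatment of vertex $1$ and label $n'$. (4) Note as a corollary, if the word is again a concatenation of two permutations, that $\overline{S_{n-k}^k}$ is a comparability graph by Theorem~\ref{thmcomp}.

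The main obstacle I anticipate is getting the interleaving pattern right simultaneously for \emph{all} vertices in the cyclic (wrap-around) regime: in the path case there is a clean left-to-right order and only one boundary to patch, but here the ``forbidden band'' of width $k+1$ slides cyclically, so a single linear arrangement of $Y$ inside each block cannot keep every $i$'s forbidden set contiguous on both sides. I expect the fix to be the same device used in the even-cycle proof: start from the word for a related graph (perhaps the generalized crown with the wrap-around matchings deleted, which behaves like a ``path-type'' band) and then perform a controlled swap of a few occurrences near the boundary to kill exactly the extra alternations, checking that no other pair's (non-)alternation is disturbed. Alternatively, if $k+1$ is small relative to $n$ one may split into the two index ranges and treat the $O(k)$ boundary vertices by hand. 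Either way, the heart of the argument is the careful, case-based verification in steps (3) for the wrap-around vertices; the rest is routine restriction-and-inspection of the kind already carried out for $\overline{P_{2n}}$.
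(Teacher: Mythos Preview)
Your primary plan cannot succeed as stated. You aim for a word $w=w^{(1)}w^{(2)}$ that is a concatenation of two permutations of $X\cup Y$; as you note in step~(4), this would make $\overline{S_{n-k}^k}$ permutationally $2$-representable and hence a comparability graph by Theorem~\ref{thmcomp}. But the case $k=0$ is exactly the complement of the crown graph $\overline{H_{n,n}}$, and Lemma~\ref{lm21} of the paper shows that $\overline{H_{n,n}}$ is \emph{not} a comparability graph for $n\ge 3$ (indeed, Corollary~\ref{corcart} gives representation number~$3$). So no amount of clever interleaving will produce a valid $2$-block word for all $k$; the obstruction is structural, not a boundary bookkeeping issue.

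Your fallback of ``start from a path-type word and patch the wrap-around via a few swaps, as in the even-cycle proof'' is also not a plan in its current form. In the cycle proof a single extra edge $1\sim n'$ had to be killed, and one swap (after passing to the $3$-uniform word $\pi(w)w$) sufficed. Here, for general $k$, every one of the $k$ vertices $n-k+1,\dots,n$ carries $k+1$ wrap-around non-edges, so the number of alternations to destroy grows with $k$, and each swap risks disturbing other pairs; you have not indicated any mechanism controlling this. The paper instead writes down from scratch a $3$-uniform word $w=h_1(V_1)\,h_2(V_2)\,h_3(V_1)\,h_2(V_2)\,h_4(V_3)\,h_5(V_4)$ built from explicit letter-substitution maps $h_i$ on four fixed substrings, and then verifies directly that $w$ restricted to each edge gives a period-$3$ alternating pattern and to each non-edge a non-alternating one, splitting the wrap-around range $n-k+1\le i\le n$ off as its own case. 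If you want to rescue your outline, you should abandon the two-permutation target and aim for three copies from the outset.
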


\begin{proof}
    We are applying the following homomorphism, and we will generate a word that represents the graph $\overline{S_{n-k}^k}$. The required homomorphism are defined as$\forall i$, $1\leq i\leq 5$, $h_i: V(\overline{S_{n-k}^k})\rightarrow V(\overline{S_{n-k}^k})$:
    \begin{itemize}
        \item $h_1(x)=x$
        \item $h_2(x)=x'(n-k+x)$
        \item $h_3(x)=(k+x)'x$
        \item $h_4(x)=x'$
        \item $h_5(x)=xx'$
    \end{itemize}
    Now we assume that $V_1=12\cdots(n-k)$, $V_2=12\cdots k$, $V_3=(k+1)(k+2)\cdots n$ and $V_4=V_2V_3=12\cdots k(k+1)\cdots n$.
    We claim that the word $w=h_1(V_1)h_2(V_2)h_3(V_1)h_2(V_2)h_4(V_3)h_5(V_4)$ represents the graph $\overline{S_{n-k}^k}$.
    First we proof that if $i\sim j$, in the graph $\overline{S_{n-k}^k}$ then $i$ and $j$ are alternating in $w$. We are considering the possible edges of the graph $\overline{S_{n-k}^k}$ as mentioned earlier in the following.
    \begin{itemize}
        \item In the graph $\overline{S_{n-k}^k}$, $i\sim j$, $1\leq i<j\leq n$. We assume $X=\{1,2,\ldots, n\}$, so in the word $w$, 
        \[
           \begin{split}
           &w_{\{X\}}=(h_1(V_1))_{\{X\}}\, (h_2(V_2))_{\{X\}} \, (h_3(V_1))_{\{X\}} \, (h_2(V_2))_{\{X\}} \, (h_4(V_3))_{\{X\}} \, (h_5(V_4))_{\{X\}}
           \\ &(h_1(V_1))_{\{X\}}= 12\cdots (n-k) 
           \\ &(h_2(V_2))_{\{X\}}=(n-k+1)(n-k+2)\cdots n
           \\ &(h_3(V_1))_{\{X\}}= 12\cdots (n-k)
           \\ &(h_2(V_2))_{\{X\}}=(n-k+1)(n-k+2)\cdots n
           \\ & h_4(V_3))_{\{X\}}=\epsilon
            \\ &(h_5(V_4))_{\{X\}}=12\cdots n
            \\ & \begin{split}
                 w_{\{X\}}&=12\cdots (n-k) \; (n-k+1)(n-k+2)\cdots n \; 12\cdots (n-k) \; (n-k+1)(n-k+2)\cdots n \\ & \hspace{5mm} \; 12\cdots (n-k)(n-k+1)(n-k+2)\cdots n
                 \\ &=(12\cdots n)^3
            \end{split}           
        \end{split}
        \] Therefore, $i$ and $j$ are alternating in $w$, $1\leq i<j\leq n$. 
        
        \item In the graph $\overline{S_{n-k}^k}$, $i'\sim j'$, $1\leq i<j\leq n$. We assume $Y=\{1',2',\ldots, n'\}$, so in the word $w$, 
        \[
           \begin{split}
           &w_{\{Y\}}=(h_1(V_1))_{\{Y\}}\, (h_2(V_2))_{\{Y\}} \, (h_3(V_1))_{\{Y\}} \, (h_2(V_2))_{\{Y\}} \, (h_4(V_3))_{\{Y\}} \, (h_5(V_4))_{\{Y\}}
           \\ &(h_1(V_1))_{\{Y\}}= \epsilon 
           \\ &(h_2(V_2))_{\{Y\}}=1'2'\cdots k'
           \\ &(h_3(V_1))_{\{Y\}}= (k+1)'(k+2)'\cdots n'
           \\ &(h_2(V_2))_{\{Y\}}=1'2'\cdots k'
           \\ & h_4(V_3))_{\{Y\}}=(k+1)'(k+2)'\cdots n'
            \\ &(h_5(V_4))_{\{Y\}}=1'2'\cdots n'
            \\ & \begin{split}
                 w_{\{X\}}&=1'2'\cdots k' \; (k+1)'(k+2)'\cdots n' \; 1'2'\cdots k' \; (k+1)'(k+2)'\cdots n'\; 1'2'\cdots n' \\ &=(1'2'\cdots n')^3
            \end{split}           
        \end{split}
        \] Therefore, $i'$ and $j'$ are alternating in $w$, $1\leq i<j\leq n$. 
        
    \item In the graph $\overline{S_{n-k}^k}$, $\forall 1\leq i\leq (n-k)$, $i\sim j$, $\forall j\in \{i',(i+1)',\ldots, (i+k)'\}$. We assume $A=\{i,i',(i+1)',\ldots, (i+k)'\}$, so in the word $w$,
            \[
           \begin{split}
           &w_{\{A\}}=(h_1(V_1))_{\{A\}}\, (h_2(V_2))_{\{A\}} \, (h_3(V_1))_{\{A\}} \, (h_2(V_2))_{\{A\}} \, (h_4(V_3))_{\{A\}} \, (h_5(V_4))_{\{A\}}
           \\ &(h_1(V_1))_{\{A\}}= i 
           \\ &(h_2(V_2))_{\{A\}}=i'(i+1)'\cdots k'
           \\ &(h_3(V_1))_{\{A\}}= (k+1)'(k+2)'\cdots (k+i)'i
           \\ &(h_2(V_2))_{\{A\}}=i'(i+1)'\cdots k'
           \\ & h_4(V_3))_{\{A\}}=(k+1)'(k+2)'\cdots (k+i)'
            \\ &(h_5(V_4))_{\{A\}}=ii'(i+1)'\cdots k'(k+1)'(k+2)'\cdots (k+i)'
            \\ & \begin{split}
                 w_{\{A\}}&=i\; i'(i+1)'\cdots k' \; (k+1)'(k+2)'\cdots (k+i)'i \; i'(i+1)'\cdots k' \; (k+1)'(k+2)'\cdots (k+i)' \; \\ & \hspace{5mm} ii'(i+1)'\cdots k'(k+1)'(k+2)'\cdots (k+i)' \\ &=(ii'(i+1)'\cdots k' \; (k+1)'(k+2)'\cdots (k+i)')^3
            \end{split}           
        \end{split}
        \] 
        Therefore, $i$ and $j$ are alternating in $w$, $1\leq i<j\leq n$. 
        
      \item In the graph $\overline{S_{n-k}^k}$, $\forall (n-k+1)\leq i\leq n$, $i\sim j$, $\forall j \in \{i',((i+1)\text{mod n})',((i+2)\text{mod n})',\ldots,((i+k)\text{mod n})'\}$, where $0'=n'$. Let, $i=n-k+l$, $1\leq l\leq k$. So, $n-k+l$ is also adjacent to the set $B=\{(n-k+l)',(n-k+2)',\ldots,n',1',2',\ldots, a'\}$. We know that the degree of $n-k+l$ is $n-1+k+1=n+k$. As, $n-k+l$ is adjacent to $\{1,2,\cdots,(n-k+l-1),(n-k+l+1),\ldots, n\}$, so the cardinality of the set $|B|=k+1$.
            \[
            \begin{split}
                &\implies|(n-k+l)',(n-k+2)',\ldots,n',1',2',\ldots, a'|= k+1
              \\&\implies|(n-k+l)',(n-k+2)',\ldots,n'|+|1',2',\ldots, a'|=k+1
              \\& \implies n-(n-k+l)+1+a=k+1
              \\& \implies a=l
            \end{split}                
            \]
      Therefore, $B=\{(n-k+l)',(n-k+2)',\ldots,n',1',2',\ldots, l'\}$, so in the word $w$,
            \[
           \begin{split}
           &w_{\{B\}}=(h_1(V_1))_{\{B\}}\, (h_2(V_2))_{\{B\}} \, (h_3(V_1))_{\{B\}} \, (h_2(V_2))_{\{B\}} \, (h_4(V_3))_{\{B\}} \, (h_5(V_4))_{\{B\}}
           \\ &(h_1(V_1))_{\{B\}}= \epsilon 
           \\ &(h_2(V_2))_{\{B\}}=1'2'\cdots l'(n-k+l)
           \\ &(h_3(V_1))_{\{B\}}= (n-k+l)'(n-k+l+1)'\cdots n'
           \\ &(h_2(V_2))_{\{B\}}=1'2'\cdots l'(n-k+l)
           \\ & h_4(V_3))_{\{B\}}=(n-k+l)'(n-k+l+1)'\cdots n'
            \\ &(h_5(V_4))_{\{B\}}=1'2'\cdots l'(n-k+l)(n-k+l)'(n-k+l+1)'\cdots n'
            \\ & \begin{split}
                 w_{\{B\}}&=1'2'\cdots l'(n-k+l)\; (n-k+l)'(n-k+l+1)'\cdots n' \; 1'2'\cdots l'(n-k+l) \; 1'2'\cdots l'(n-k+l) \\ & \hspace{5mm}\; (n-k+l)'(n-k+l+1)'\cdots n' \;  1'2'\cdots l'(n-k+l)(n-k+l)'(n-k+l+1)'\cdots n' \\ &=(1'2'\cdots l'(n-k+l)(n-k+l)'(n-k+l+1)'\cdots n')^3
            \end{split}           
        \end{split}
        \] Therefore, $i$ and $j$ are alternating in $w$, $1\leq i<j\leq n$.    
    \end{itemize}
    As, $w$ preserve the alternation for the edges of the graph $\overline{S_{n-k}^k}$, now
 we need to prove that if $i\nsim j$, in the graph $\overline{S_{n-k}^k}$ then $i$ and $j$ are not alternating in $w$. We are considering the possible non-edges of the graph $\overline{S_{n-k}^k}$ in the following.
 \begin{itemize}
     \item In the graph $\overline{S_{n-k}^k}$, $\forall 1\leq i\leq (n-k)$, $i\nsim j$, $\forall j\in \{1',2',\ldots, (i-1)',(i+k+1)',\ldots, n'\}$. 
      We assume $A=\{i,1',2',\ldots, (i-1)',(i+k+1)',\ldots, n'\}$, so in the word $w$,
            \[
           \begin{split}
           &w_{\{A\}}=(h_1(V_1))_{\{A\}}\, (h_2(V_2))_{\{A\}} \, (h_3(V_1))_{\{A\}} \, (h_2(V_2))_{\{A\}} \, (h_4(V_3))_{\{A\}} \, (h_5(V_4))_{\{A\}}
           \\ &(h_1(V_1))_{\{A\}}= i 
           \\ &(h_2(V_2))_{\{A\}}=\begin{cases} 
                                1'2'\cdots (i-1)' & 1\leq i\leq k \\
                                1'2'\cdots k' & k< i\leq n-k 
                                \end{cases}
           \\ &(h_3(V_1))_{\{A\}}= \begin{cases} 
                                i(i+k+1)(i+k+2)\cdots n' & 1\leq i\leq k \\
                               (k+1)'(k+2)'\cdots (i-1)'i(i+k+1)(i+k+2)\cdots n' & k< i\leq n-k 
                                \end{cases}
           \\ &(h_2(V_2))_{\{A\}}=\begin{cases} 
                                1'2'\cdots (i-1)' & 1\leq i\leq k \\
                                1'2'\cdots k' & k< i\leq n-k 
                                \end{cases}
           \\ & h_4(V_3))_{\{A\}}=\begin{cases} 
                                (i+k+1)'(i+k+2)'\cdots n' & 1\leq i\leq k \\
                               (k+1)'(k+2)'\cdots (i-1)'(i+k+1)'(i+k+2)'\cdots n' & k< i\leq n-k 
                                \end{cases}
            \\ &(h_5(V_4))_{\{A\}}=1'2'\cdots (i-1)'i(i+k+1)'\cdots n'
             \end{split}
        \]
When we combine all the cases of $(h_2(V_2))_{\{A\}}$ and $(h_3(V_1))_{\{A\}}$, we obtain that $\Big(h_2(V_2)h_3(V_1)\Big)_{\{A\}}=1'2'\cdots (i-1)'i(i+k+1)\cdots n'$. Similarly, combining the $(h_2(V_2))_{\{A\}}$ and $(h_4(V_3))_{\{A\}}$, we obtain that $\Big(h_2(V_2)h_4(V_3)\Big)_{\{A\}}=1'2'\cdots (i-1)'(i+k+1)'\cdots n'$. Therefore, the word $w_{\{A\}}=i\;1'2'\cdots (i-1)'i(i+k+1)\cdots n'\; 1'2'\cdots (i-1)'(i+k+1)'\cdots n'1'2'\cdots (i-1)'i(i+k+1)'\cdots n'(ii'(i+1)'\cdots k'$. 

As, $w_{\{i,1',2',\ldots,(i-1)'\}}=i1'2'\cdots (i-1)'i1'2'\cdots (i-1)'1'2'\cdots (i-1)'i$, so in the word $w$, $i$ is not alternating with any element of the set $\{1'2'\cdots (i-1)'\}$. Also, $w_{\{i,(i+k+1)',(i+k+2)',\ldots,n'\}}=ii (i+k+1)'(i+k+2)'\cdots n'  (i+k+1)'(i+k+2)'\cdots n'i (i+k+1)'(i+k+2)'\cdots n'$. Therefore, in the word $w$, $i$ is not alternating with any element of the set $\{(i+k+1)'(i+k+2)'\cdots n' \}$.    
\item Now we consider the $i$ where $\forall (n-k+1)\leq i\leq n$ and $i\in V(\overline{S_{n-k}^k})$. We assume $i=n-k+l$, $1\leq l\leq k$. So, we know that $n-k+l$ is adjacent to $\{(n-k+l)',(n-k+2)',\ldots,n',1',2',\ldots, l'\}$. Therefore, the nonadjacent vertices of $n-k+l$ are $B=\{(l+1)', (l+2)',\ldots, (n-k+l-1)'\}$. So, in the word $w$,
            \[
           \begin{split}
           &w_{\{B\}}=(h_1(V_1))_{\{B\}}\, (h_2(V_2))_{\{B\}} \, (h_3(V_1))_{\{B\}} \, (h_2(V_2))_{\{B\}} \, (h_4(V_3))_{\{B\}} \, (h_5(V_4))_{\{B\}}
           \\ &(h_1(V_1))_{\{B\}}= \epsilon 
           \\ &(h_2(V_2))_{\{B\}}=\begin{cases} 
                                (n-k+l)(l+1)'(l+2)'\cdots (n-k+l-1)' & (n-k)\leq i\leq k \\
                                (n-k+l)(l+1)'(l+2)'\cdots k & k< i\leq n 
                                \end{cases}
           \\ &(h_3(V_1))_{\{B\}}= \begin{cases} 
                                \epsilon & (n-k)\leq i\leq k \\
                               (k+1)'(k+2)'\cdots (n-k+l-1)' & k< i\leq n  
                                \end{cases}
           \\ &(h_2(V_2))_{\{B\}}=\begin{cases} 
                                (n-k+l)(l+1)'(l+2)'\cdots (n-k+l-1)' & (n-k)\leq i\leq k \\
                                (n-k+l)(l+1)'(l+2)'\cdots k & k< i\leq n 
                                \end{cases}
           \\ & h_4(V_3))_{\{B\}}=\begin{cases} 
                                \epsilon & (n-k)\leq i\leq k \\
                               (k+1)'(k+2)'\cdots (n-k+l-1)' & k< i\leq n  
                                \end{cases}
            \\ &(h_5(V_4))_{\{B\}}=(l+1)'(l+2)'\cdots (n-k+l-1)'(n-k+l).
             \end{split}
             \]
   When we combine all the cases of $(h_2(V_2))_{\{B\}}$ and $(h_3(V_1))_{\{B\}}$, we obtain that $\Big(h_2(V_2)h_3(V_1)\Big)_{\{B\}}=(n-k+l)(l+1)'(l+2)'\cdots (n-k+l-1)'=i(l+1)'(l+2)'\cdots (n-k+l-1)'$. Similarly, combining the $(h_2(V_2))_{\{B\}}$ and $(h_4(V_3))_{\{B\}}$, we obtain that $\Big(h_2(V_2)h_4(V_3)\Big)_{\{B\}}=(n-k+l)(l+1)'(l+2)'\cdots (n-k+l-1)'i(l+1)'(l+2)'\cdots (n-k+l-1)'$. 
   
   Therefore, the word $w_{\{B\}}=(n-k+l)(l+1)'(l+2)'\cdots (n-k+l-1)'\;(n-k+l)(l+1)'(l+2)'\cdots (n-k+l-1)'\; (l+1)'(l+2)'\cdots (n-k+l-1)'(n-k+l)=i(l+1)'(l+2)'\cdots (n-k+l-1)'\;i\Big((l+1)'(l+2)'\cdots (n-k+l-1)'\Big)^2i$. So, in the word $w$, $i$ is not alternating with the element of the set $B$.
 \end{itemize}
 Hence, the word $w$ represents the graph $\overline{S_{n-k}^k}$.
\end{proof}
\subsection{ A non-word representable co-bipartite graph $\overline{G_1}$}
In this section, we obtain a co-bipartite graph from the complement of the crown graph that is not a word-representable graph.
The graph $G_1$ is the graph obtained from the crown graph $H_{n,n}$ by adding an isolated vertex $v$. We can easily see that $G_1$ is also a bipartite graph because we can add the vertex $v$ in any partition.
First, we prove that $\overline{H_{n,n}}$ is not a comparability graph.
\begin{lemma}\label{lm21}
    The co-bipartite graph $\overline{H_{n,n}}$ is not a comparability graph.
\end{lemma}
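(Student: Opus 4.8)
The plan is to exhibit an odd cycle in $\overline{H_{n,n}}$ that has no triangular chord, and then invoke Theorem \ref{compchar} (the Gilmore--Hoffman characterization) to conclude that $\overline{H_{n,n}}$ is not a comparability graph. Recall that $\overline{H_{n,n}} = K_n \square K_2$: the vertex set splits into two cliques $X = \{1,\dots,n\}$ and $Y = \{1',\dots,n'\}$, and the only cross edges are the matching edges $i \sim i'$ for $1 \le i \le n$ (these are exactly the edges that were \emph{deleted} from $K_{n,n}$ to form the crown graph, hence present in the complement). So I must first note that $n \ge 3$ is needed for a relevant odd cycle to exist, and then locate a short odd cycle — most naturally a $5$-cycle — whose vertices span two or three of the cliques' elements and check that none of the three "skipping" chords is present.

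Concretely, I would take the $5$-cycle $C\colon 1 \to 2 \to 3 \to 3' \to 1' \to 1$. Let me verify the edges: $1\sim 2$, $2 \sim 3$ (both inside the clique $X$); $3 \sim 3'$ (a matching edge); $3' \sim 1'$ (inside the clique $Y$); and $1' \sim 1$ (a matching edge). All five are edges of $\overline{H_{n,n}}$, so $C$ is indeed a $5$-cycle, and it requires only $n \ge 3$. Now I check the triangular chords, i.e. the edges joining vertices at distance $2$ on $C$: the pair $\{1,3\}$ — this is an edge (inside $X$), so this chord \emph{is} present. That breaks the example. So instead I should pick the $5$-cycle more carefully so that the distance-$2$ pairs straddle the partition in a way that kills the chord. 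Take $C\colon 1 \to 1' \to 2' \to 2 \to 3 \to 1$ — but $3 \sim 1$ is again a clique edge. The obstruction is that any two $X$-vertices (or two $Y$-vertices) on the cycle are automatically adjacent, so I need the cycle to alternate sides as much as possible; with an odd cycle that is impossible, and exactly one "same-side" consecutive pair must occur — but then its two neighbours give a chord.

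The fix is to use a $5$-cycle with exactly the structure $x_1 \to x_2 \to y_2 \to y_3 \to y_1 \to x_1$ where the only clique-internal consecutive pairs are $\{x_1,x_2\}$ in $X$ and $\{y_2,y_3\}$, $\{y_3,y_1\}$ in $Y$; concretely $C\colon 1 \to 2 \to 2' \to 3' \to 1' \to 1$, using edges $1\sim2$ (in $X$), $2\sim2'$ (matching), $2'\sim3'$ (in $Y$), $3'\sim1'$ (in $Y$), $1'\sim1$ (matching). The three triangular chords are $\{1,2'\}$, $\{2,3'\}$, $\{2',1'\}$ — wait, $\{2',1'\}$ is in $Y$, an edge. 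So this still fails. The real lesson: I should instead produce an odd cycle of length $\geq 5$ whose chord-freeness is genuine, and the cleanest route is to observe that $\overline{H_{n,n}}$ contains an induced $\overline{C_6}$ or to find the odd hole directly — the correct choice is $C\colon 1 \to 2 \to 3 \to 1' \to 2' \to 1$ when one rechecks adjacencies, or more safely to take a $5$-cycle through vertices $\{1,2,3\} \subseteq X$ together with $\{1',2'\}$ and use the three matching edges are absent on the needed chords.

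I expect the \textbf{main obstacle} to be precisely this: because $X$ and $Y$ are cliques, every odd cycle has a same-side consecutive pair and the naive candidates acquire a triangular chord, so the genuine work is to find the right odd cycle (of length $5$, using three vertices from one clique where the "skip" edges land on cross-pairs $\{i,j'\}$ with $i \ne j$, which are non-edges since cross-edges form only a matching) and then to check all three of its triangular chords are non-edges. Once such a cycle $C$ is fixed, the verification that each of its three triangular chords is a cross-pair $\{i,j'\}$ with $i\neq j$ — hence a non-edge of $\overline{H_{n,n}}$ — is a short finite check, and Theorem \ref{compchar} finishes the argument. (The case $n \le 2$ is handled separately: $\overline{H_{1,1}}$ and $\overline{H_{2,2}}$ are small and in fact \emph{are} comparability graphs, so the lemma is really a statement for $n \ge 3$, or more precisely for the $n$ in the subsequent construction of $\overline{G_1}$; I would state the hypothesis accordingly.)
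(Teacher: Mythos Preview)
Your proposal has a genuine gap: you never actually exhibit a cycle that works, and in fact no \emph{simple} odd cycle in $\overline{H_{n,n}}$ can be free of triangular chords. Here is why. A simple odd cycle $v_1v_2\cdots v_{2k+1}$ has at least $k+1$ of its vertices in one of the two cliques $X,Y$, say $X$. The ``distance-$2$'' graph on the cyclic index set $\{1,\dots,2k+1\}$ is itself a $(2k+1)$-cycle (since $\gcd(2,2k+1)=1$), whose independence number is $k$. Hence among any $k+1$ positions there are two at cyclic distance $2$; the corresponding pair of vertices both lie in the clique $X$ and so form a triangular chord. This is exactly the obstruction you kept running into with your $5$-cycle candidates, and it does not disappear by passing to length $7$, $9$, etc.

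The paper's proof avoids this by exploiting the full strength of the Gilmore--Hoffman notion of ``cycle'' recalled just after Theorem~\ref{compchar}: it is a closed walk that may repeat vertices (only repetition of a directed edge is forbidden). The paper writes down the length-$9$ closed walk $1'\,1\,2\,2'\,2\,3\,3'\,3\,1$ in the induced subgraph on $\{1,2,3,1',2',3'\}$. Each pair $(a_i,a_{i+2})$ is either a cross-pair $\{j,k'\}$ with $j\neq k$ (a non-edge, since the cross-edges of $\overline{H_{n,n}}$ form only the matching $i\sim i'$) or a degenerate pair with $a_i=a_{i+2}$. Thus this odd cycle has no triangular chord, and Theorem~\ref{compchar} applies. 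Your observation that the lemma requires $n\ge 3$ is correct.
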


\begin{proof}
    Suppose the co-bipartite graph $\overline{H_{n,n}}$ is partitioned into two cliques $K_n$ and $K'_n$. Now, we assume the vertices of each clique $K_n$ and $K'_n$ are $\{1,2,\ldots,n\}$ and $\{1',2',\ldots, n'\}$, respectively. Also, each $i,j$, $1\leq i<j\leq n$, $i\sim j$, $i\sim i'$ and $i'\sim j'$ in the graph $\overline{H_{n,n}}$. Now we are considering the induced subgraph of the vertices, $1$, $2$, $3$, $1'$, $2'$ and $3'$ from the graph $\overline{H_{n,n}}$ in the following Figure \ref{fig41}.

\begin{figure}[H]
\begin{center}
\begin{tikzpicture}[node distance=1.5cm,auto,main node/.style={fill,circle,draw,inner sep=1pt,minimum size=5pt}]

\node [main node](1) {};
\node [below of=1,yshift=12mm] {$1$};
\node [main node](2) [ below left of=1] {};
\node [right of=2,xshift=-10mm,yshift=2mm] {$2$};
\node [main node](3) [ below right of=1] {};
\node [right of=2,xshift=2mm,yshift=2mm] {$3$};
\node [main node](4) [ above of=1] {};
\node [above of=4,yshift=-12mm] {$1'$};
\node [main node](5) [ below left of=2] {};
\node [left of=5,xshift=12mm] {$2'$};
\node [main node](6) [ below right of=3] {};
\node [right of=6,xshift=-12mm] {$3'$};

\draw (1) -- (2);
\draw (2) -- (3);
\draw (1) -- (3);
\draw (4) -- (5);
\draw (5) -- (6);
\draw (4) -- (6);
\draw (1) -- (4);
\draw (2) -- (5);
\draw (3) -- (6);

\end{tikzpicture}

\caption{\label{fig41} induced subgraph of the vertices, $1$, $2$, $3$, $1'$, $2'$ and $3'$ from the graph $\overline{H_{n,n}}$}
\end{center}
\end{figure}
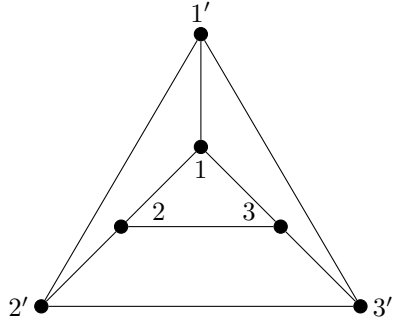
Now, from this graph, we consider a cycle $1'122'233'31$ as described for Theorem \ref{compchar}. This is an odd cycle of length $9$. We can easily verify that no triangular chord exists in this cycle. So, $\overline{H_{n,n}}$ contains an odd cycle that does not have any triangular chord. Therefore, according to Theorem \ref{compchar}, $\overline{H_{n,n}}$ is not a comparability graph.
\end{proof}

\begin{theorem}
The co-bipartite graph $\overline{G_1}$ is non-word-representable where $n\geq 3$, for $H_{n,n}$ graph.
\end{theorem}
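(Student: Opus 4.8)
The plan is to use Theorem \ref{tm}, which characterizes word-representability of a graph with a dominant vertex in terms of permutational representability (equivalently, by Theorem \ref{thmcomp}, the comparability property) of the graph obtained by deleting that dominant vertex. Recall that $G_1$ is the crown graph $H_{n,n}$ together with an isolated vertex $v$. Taking the complement, the isolated vertex $v$ becomes a dominant vertex in $\overline{G_1}$: it is adjacent to every vertex of $\overline{H_{n,n}}$. Moreover, deleting $v$ from $\overline{G_1}$ leaves exactly the induced subgraph $\overline{H_{n,n}}$. So $\overline{G_1} = \overline{H_{n,n}}$ with a dominant vertex added, and $H := \overline{G_1}\setminus\{v\} = \overline{H_{n,n}}$.

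First I would verify the two structural facts above carefully: that $v$ is all-adjacent in $\overline{G_1}$ (immediate, since $v$ is isolated in $G_1$), and that $\overline{G_1}\setminus\{v\}$ is isomorphic to $\overline{H_{n,n}}$ (also immediate, since complementation commutes with taking induced subgraphs on the remaining vertex set, and $G_1\setminus\{v\} = H_{n,n}$). Then, by Theorem \ref{tm}, $\overline{G_1}$ is word-representable if and only if $\overline{H_{n,n}}$ is permutationally representable. By Theorem \ref{thmcomp}, $\overline{H_{n,n}}$ is permutationally representable if and only if it is a comparability graph. But Lemma \ref{lm21} shows that for $n\geq 3$ the graph $\overline{H_{n,n}}$ is \emph{not} a comparability graph (it contains the chordless-in-the-triangular-sense odd $9$-cycle $1'122'233'31$ exhibited there, which requires $n\ge 3$ so that the vertices $1,2,3,1',2',3'$ all exist). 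Chaining these equivalences, $\overline{G_1}$ is not word-representable.

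I would write the argument as: assume for contradiction $\overline{G_1}$ is word-representable; apply Theorem \ref{tm} with $x = v$ and $H = \overline{G_1}\setminus\{v\} \cong \overline{H_{n,n}}$ to conclude $\overline{H_{n,n}}$ is permutationally representable; apply Theorem \ref{thmcomp} to conclude $\overline{H_{n,n}}$ is a comparability graph; this contradicts Lemma \ref{lm21} for $n\ge 3$. Hence $\overline{G_1}$ is non-word-representable. It is also worth noting explicitly that $\overline{G_1}$ is co-bipartite: the partition of $V(H_{n,n})$ into its two independent sets $X$ and $Y$ becomes a partition of $V(\overline{H_{n,n}})$ into two cliques, and appending $v$ to either side keeps both parts cliques (since $v$ is adjacent to everything), so $\overline{G_1}$ is the union of two cliques — a co-bipartite graph.

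There is no real obstacle here; the only thing to be careful about is making the reduction to Theorem \ref{tm} airtight — namely that deleting the dominant vertex returns precisely $\overline{H_{n,n}}$ and not some other graph — and ensuring the hypothesis $n\ge 3$ is invoked exactly where Lemma \ref{lm21} needs it (the six named vertices must be distinct, and the $9$-cycle must genuinely be an induced-type odd cycle with no triangular chord). Everything else is a direct citation of results already established in the excerpt.
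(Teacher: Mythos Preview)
Your proposal is correct and follows essentially the same approach as the paper's proof: identify $v$ as a dominant vertex in $\overline{G_1}$, observe that $\overline{G_1}\setminus\{v\}=\overline{H_{n,n}}$, and combine Theorem~\ref{tm} with Lemma~\ref{lm21} (via Theorem~\ref{thmcomp}) to conclude non-word-representability. The only difference is that you make the bridge through Theorem~\ref{thmcomp} explicit, whereas the paper leaves that step implicit.
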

\begin{proof}
    As the graph $G_1$ has an isolated vertex $v$, in the co-bipartite graph $\overline{G_1}$, $v$ becomes adjacent to all of the vertices of the graph $\overline{H_{n,n}}$. According to Lemma \ref{lm21},  the graph $\overline{H_{n,n}}$ is not a comparability graph. Therefore, according to Theorem \ref{tm}, $G_1$ is not word-representable.
\end{proof}
\section{Co-bipartite graph on a fixed clique size}
A co-bipartite graph $\overline{B(K_m, K_n)}$ can be partitioned into two cliques $K_m$ and $K_n$, and there are some edges between the vertices of $K_m$ and $K_n$. In these two partitions, we want to fix the size of one partition while allowing the other to be arbitrary. Our aim is to identify which of these graphs are word-representable.
\subsection{Clique size $2$}
We are considering $K_n=K_2$, and want to find for different $K_m$, does $\overline{B(K_m, K_n)}$ word-representable. We assume the vertices of $K_2$ are $1$ and $2$. Also, $N_i$ denotes the set of the vertices that are adjacent to $i$. Now, the vertices of $K_m$ are divided in the following:
\begin{enumerate}
    \item $N_{1\overline{2}}$ are the set of the vertices $v$ such that $v\in V(K_m)$ and $v\in N_1\setminus N_2$. 
    \item $N_{12}$ are the set of the vertices $v$ such that $v\in V(K_m)$ and $v\in N_1\cap N_2$. 
    \item $N_{\overline{1}2}$ are the set of the vertices $v$ such that $v\in V(K_m)$ and $v\in N_2\setminus N_1$. 
    \item $N_{\overline{1}\overline{2}}$ are the set of the vertices $v$ such that $v\in V(K_m)$ and $v\in V\setminus(N_1\cup N_2)$. 
\end{enumerate}
In the following diagram, it will be more precise.

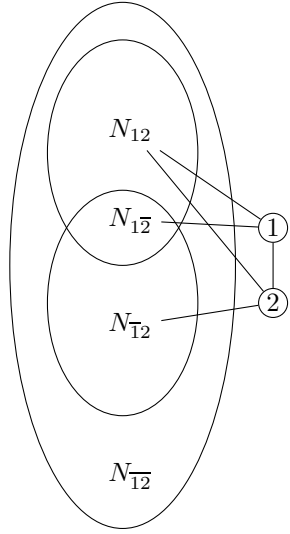
\begin{figure}[h]
\begin{center}
\begin{tikzpicture}[node distance=1cm,auto,main node/.style={circle,draw,inner sep=1pt,minimum size=5pt}]

\node[main node] (1) {1};
\node[main node] (2) [ below of=1] {2};
\node (3) [left of=1, yshift=1mm,xshift=-9mm] {$N_{1\overline{2}}$};
\node (4) [left of=1, yshift=13mm,xshift=-9mm] {$N_{12}$};
\node (5) [left of=1, yshift=-13mm,xshift=-9mm] {$N_{\overline{1}2}$};
\node (6) [left of=1, yshift=-33mm,xshift=-9mm] {$N_{\overline{1}\overline{2}}$};

\draw (1) -- (2);
\draw (1) -- (3);
\draw (1) -- (4);
\draw (2) -- (4);
\draw (2) -- (5);

\draw [] (-2,-1) ellipse (10mm and 15mm); 
\draw [] (-2,1) ellipse (10mm and 15mm); 
\draw [] (-2,-.5) ellipse (15mm and 35mm); 
\end{tikzpicture}

\caption{\label{fig51} the co-bipartite graph $\overline{B(K_m,K_2)}$}
\end{center}
\end{figure}

\begin{theorem}
    The co-biparitite graph $\overline{B(K_m, K_2)}$, is word-representable.
\end{theorem}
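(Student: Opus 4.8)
The plan is to produce an explicit semi-transitive orientation of $\overline{B(K_m,K_2)}$ and invoke Theorem~\ref{key-thm}, though since the graph has the all-adjacent structure built from $K_2$ it may be cleaner to use Theorem~\ref{tm}: the vertex $1$ together with its neighbours, and the vertex $2$ together with its neighbours, each sit inside a clique of $\overline{B(K_m,K_2)}$, but neither $1$ nor $2$ need be dominant. So I would instead work directly with a word. First I would fix an arbitrary linear order $v_1,\dots,v_p$ on the vertices of $N_{1\overline{2}}\cup N_{12}\cup N_{\overline 12}\cup N_{\overline 1\overline 2}=V(K_m)$; the key point is that $V(K_m)$ is a clique, so in any representing word all these letters must pairwise alternate, which forces each of them to appear the same number of times and in the same relative order in every ``block''. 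This suggests trying a $2$-uniform (or at worst $3$-uniform) word of the form $P\,Q$ where $P$ and $Q$ are each a shuffle of the permutation $v_1\cdots v_p$ of $V(K_m)$ with the two letters $1,2$ inserted at positions dictated by the four neighbourhood classes.

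The concrete construction I would try: write $A=$ (the letters of $N_{12}$ in order), $B=$ ($N_{1\overline2}$), $C=$ ($N_{\overline12}$), $D=$ ($N_{\overline1\overline2}$), so $V(K_m)$ is represented by some fixed interleaving of $A,B,C,D$; call that permutation $\rho$. I would guess a word such as $w = 1\,2\,\rho\,\rho'$ or, more likely needed, a word where $1$ and $2$ each occur twice, placed so that (i) $1$ alternates with exactly $A\cup B$, (ii) $2$ alternates with exactly $A\cup C$, (iii) $1$ and $2$ alternate with each other (since $1\sim 2$ in $K_2\subseteq$ co-bipartite graph — indeed $\{1,2\}$ is an edge of $\overline{B(K_m,K_2)}$), and (iv) $D$ is non-adjacent to both. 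A workable shape is to list $V(K_m)$ twice as $\rho\rho$, then insert the first $1$ just before the first occurrence of $B$-and-$A$, the first $2$ just before $C$-and-$A$, and the second copies symmetrically in the second block, arranging $D$ to be ``straddled'' (appearing $x\,x$ consecutively relative to $1$ and to $2$). One then checks the six alternation requirements by projecting $w$ onto each relevant pair.

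The step I expect to be the real obstacle is making a \emph{single} placement of the letters $1$ and $2$ simultaneously satisfy all of conditions (i)--(iv): the vertices in $N_{12}$ must alternate with \emph{both} $1$ and $2$, while those in $N_{1\overline2}$ must alternate with $1$ but be ``broken'' against $2$, and the positions that break $2$ against $N_{1\overline2}$ must not also break $1$ against it, and symmetrically. Because $V(K_m)$ is a clique its letters have a rigid cyclic order in the word, so the freedom we have is only in where the (two or three) copies of $1$ and of $2$ go; I would resolve this by splitting $\rho$ as $\rho = A_1 B\, A_2 C\, A_3 D$ (any interleaving with the $A$-part possibly fragmented) and placing $1$ immediately before $B$ in each block and $2$ immediately before $C$, then verifying that the $A$-letters, lying between consecutive occurrences of $1$ and of $2$, alternate with both, that $B$-letters fall between the two $1$'s but see two $2$'s in a row, that $D$-letters see two $1$'s and two $2$'s in a row, and that $1,2$ themselves alternate. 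If a $2$-uniform word cannot absorb all four classes cleanly, the fallback is to pad to a $3$-uniform word (appending $\pi(w)$ as in Observation~\ref{pw}), which gives extra room to break the unwanted alternations without disturbing the clique structure; since Corollary~\ref{corcart} already shows representation number $3$ is sometimes necessary for such products, a $3$-uniform target is the natural ceiling. Once the word is exhibited, the proof is just the routine pairwise projection check, exactly in the style of the proof of Theorem~\ref{thm1}.
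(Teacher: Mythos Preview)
Your overall strategy---write down an explicit $k$-uniform word on $V(K_m)\cup\{1,2\}$ and verify the six alternation requirements by projection---is exactly what the paper does. The paper succeeds with a $2$-uniform word
\[
w = N_{12}\,1\,N_{\overline{1}2}\,2\,N_{\overline{1}\overline{2}}\,N_{1\overline{2}}\ \ N_{12}\,N_{\overline{1}2}\,N_{\overline{1}\overline{2}}\,1\,N_{1\overline{2}}\,2,
\]
and the rest of its proof is just the routine projection check you anticipate.

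There is, however, a real gap in your concrete attempt. Your suggested placement---put $1$ immediately before $B=N_{1\overline 2}$ in \emph{each} block and $2$ immediately before $C=N_{\overline 12}$ in \emph{each} block---fails: with $\rho=A\,B\,C\,D$ the word $A1B2CD\,A1B2CD$ restricted to $\{1\}\cup C$ reads $1C1C$, so $1$ alternates with every vertex of $N_{\overline 12}$, which is exactly what you must avoid. The trick you are missing, and that the paper uses, is that the two copies of $1$ (and of $2$) need \emph{not} sit in the same relative position in the two blocks: in the paper's word the first block has $1$ after $N_{12}$ and $2$ after $N_{\overline 12}$, while the second block has $1$ after $N_{\overline 1\overline 2}$ and $2$ after $N_{1\overline 2}$. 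This asymmetry is precisely what breaks the unwanted alternations without disturbing the clique.

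Your fallback is also not a repair mechanism as stated. Observation~\ref{pw} says that if $w$ represents $G$ then so does $\pi(w)w$; it does \emph{not} let you fix a word that represents the wrong graph. Prepending $\pi(w)$ to a $w$ in which $1$ and $N_{\overline 12}$ already alternate will leave them alternating. If you want a $3$-uniform construction you must design the three blocks from scratch (and then indeed there is more room), but the paper shows this is unnecessary: a $2$-uniform word with asymmetric insertion of $1$ and $2$ already works.
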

\begin{proof}  
Our claim is that the word $w = N_{12}1N_{\overline{1}2}2N_{\overline{1}\overline{2}}N_{1\overline{2}}N_{12}N_{\overline{1}2}N_{\overline{1}\overline{2}}1N_{1\overline{2}}2$ represents the graph $\overline{B(K_m, K_2)}$, where $N_{1\overline{2}}$, $N_{12}$, $N_{\overline{1}2}$, and $N_{\overline{1}\overline{2}}$ are defined above. The possible edges in $\overline{B(K_m, K_2)}$ are in the following:
\begin{enumerate}
    \item $1\sim 2$
    \item $i\sim j$, $\forall {i,j}\in V(K_m), i\neq j$.
    \item $1\sim N_{12}$, $1\sim N_{1\overline{2}}$.
    \item $2\sim N_{12}$, $2\sim N_{\overline{1}2}$.
\end{enumerate}
Now, we prove that the alternation between the vertices $x$ and $y$ occurs in $w$ if the edge $x \sim y$ is included among the edges described above.
\begin{enumerate}
    \item As $1\sim 2$, $1$ and $2$ should alternate in $w$. Now, $w_{\{1,2\}}=1212$. Therefore, $1$ and $2$ are alternating in $w$.
    \item As $i\sim j$, $\forall {i,j}\in V(K_m), i\neq j$, any $i$ and $j$ should alternate in $w$. We know that $N_{1\overline{2}} \cup N_{12}\cup N_{\overline{1}2}\cup N_{\overline{1}\overline{2}}= V(K_m)$. So, $w_{\{V(K_m)\}}=N_{12}N_{\overline{1}2}N_{\overline{1}\overline{2}}N_{1\overline{2}}N_{12}N_{\overline{1}2}N_{\overline{1}\overline{2}}N_{1\overline{2}}$. Hence, $i$ and $j$ are alternating in $w$.
    \item As, $1\sim N_{12}$, $1\sim N_{1\overline{2}}$, $1$ should alternate with $N_{12}$ and $N_{1\overline{2}}$ in $w$. Now, $w_{\{1,N_{12},N_{1\overline{2}}\}}=N_{12}1N_{1\overline{2}}N_{12}1N_{1\overline{2}}$. So,  $1$ is alternating with $N_{12}$ and $N_{1\overline{2}}$ in $w$.
    \item As, $2\sim N_{12}$, $2\sim N_{\overline{1}2}$, $2$ should alternate with $N_{12}$ and $N_{\overline{1}2}$ in $w$. Now, $w_{\{2,N_{12},N_{\overline{1}2}\}}=N_{12}N_{\overline{1}2}2N_{12}1N_{\overline{1}2}2$. So,  $2$ is alternating with $N_{12}$ and $N_{\overline{1},2}$ in $w$.
\end{enumerate}
  Since alternation is preserved in $w$ for the edges of the graph $\overline{B(K_m, K_2)}$, we now need to show that non-alteration is also preserved in $w$ for the non-edges of the graph $\overline{B(K_m, K_2)}$. The possible cases are the following.
    \begin{enumerate}
        \item In the graph $\overline{B(K_m, K_2)}$, $1\nsim N_{\overline{1}2}$ and $1\nsim N_{{\overline{1}\overline{2}}}$. In the word $w$, $w_{\{1, N_{\overline{1}2},N_{\overline{1}\overline{2}}\}}=1N_{\overline{1}2} N_{\overline{1}\overline{2}}N_{\overline{1}2} N_{\overline{1}\overline{2}}1$. Therefore, $1$ does not alternate with $N_{\overline{1}2}$ and $N_{{\overline{1}\overline{2}}}$ in $w$.
        \item In the graph $\overline{B(K_m, K_2)}$, $2\nsim N_{1\overline{2}}$ and $2\nsim N_{{\overline{1}\overline{2}}}$. In the word $w$, $w_{\{2, N_{1\overline{2}},N_{\overline{1}\overline{2}}\}}=2N_{\overline{1}\overline{2}}N_{1\overline{2}}N_{\overline{1}\overline{2}}N_{1\overline{2}}2$. Therefore, $2$ does not alternate with $N_{1\overline{2}}$ and $N_{{\overline{1}\overline{2}}}$ in $w$.
    \end{enumerate}
\end{proof}
\subsection{Clique size $3$}
We are considering $K_n=K_3$, and want to find for different $K_m$, does $\overline{B(K_m, K_3)}$ word-representable. We assume the vertices of $K_3$ are $1$, $2$ and $3$. Also, $N_i$ denotes the set of the vertices that are adjacent to $i$. Now, the vertices of $K_m$ are divided in the following:
\begin{enumerate}
    \item $N_{1\overline{2}\overline{3}}$ is the set of the vertices $v$ such that $v\in V(K_m)$ and $v\in N_1\setminus (N_2\cup N_3)$.
    \item $N_{\overline{1}2\overline{3}}$ is the set of the vertices $v$ such that $v\in V(K_m)$ and $v\in N_2\setminus (N_1\cup N_3)$.
    \item $N_{\overline{1}\overline{2}3}$ is the set of the vertices $v$ such that $v\in V(K_m)$ and $v\in N_3\setminus (N_1\cup N_2)$.
    \item $N_{12\overline{3}}$ is the set of the vertices $v$ such that $v\in V(K_m)$ and $v\in (N_1\cap N_2)\setminus N_3$. 
    \item $N_{\overline{1}23}$ is the set of the vertices $v$ such that $v\in V(K_m)$ and $v\in (N_2\cap N_3)\setminus N_1$. 
    \item $N_{1\overline{2}3}$ is the set of the vertices $v$ such that $v\in V(K_m)$ and $v\in (N_1\cap N_3)\setminus N_2$. 
    \item $N_{123}$ is the set of the vertices $v$ such that $v\in V(K_m)$ and $v\in N_1\cap N_2\cap N_3$.
    \item $N_{\overline{1}\overline{2}\overline{3}}$ is the set of the vertices $v$ such that $v\in V(K_m)$ and $v\notin N_1\cup N_2\cup N_3$.
\end{enumerate}

Consider the non-word-representable co-bipartite graph $\overline{T_1}$ shown in Figure \ref{nonRepTri1}. This graph is partitioned into two cliques, $K_3$ and $K_4$. There exists a vertex $v \in V(K_4)$ that is not connected to any vertices in clique $K_3$. Additionally, the non-word-representable co-bipartite graph $\overline{G_1}$ has a vertex that is adjacent to all others. 

So, we consider the situation where both sets $N_{123}$ and $N_{\overline{1}\overline{2}\overline{3}}$ are empty. In this context, we aim to prove that the bipartite graph $\overline{B(K_m, K_3)}$ is word-representable.
\begin{theorem}
    The co-biparitite graph $\overline{B(K_m, K_3)}$, is word-representable where $N_{123}=\phi$ and $N_{\overline{1}\overline{2}\overline{3}}=\phi$.
\end{theorem}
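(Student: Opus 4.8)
The plan is to mimic the structure of the $K_2$ case: exhibit an explicit word $w$ built by concatenating the eight neighbour-classes $N_{1\overline{2}\overline{3}}, N_{\overline{1}2\overline{3}}, N_{\overline{1}\overline{2}3}, N_{12\overline{3}}, N_{\overline{1}23}, N_{1\overline{2}3}$ (with $N_{123}=\phi$ and $N_{\overline{1}\overline{2}\overline{3}}=\phi$) together with the three clique vertices $1,2,3$, each class appearing a fixed number of times, and then verify edge-by-edge and non-edge-by-non-edge that alternation is respected. First I would list the edges of $\overline{B(K_m,K_3)}$: all pairs inside $V(K_m)$, all pairs inside $\{1,2,3\}$, and the cross edges $i\sim N_S$ precisely when $i\in S$. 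Then I would design $w$ so that (i) restricted to $V(K_m)$ it is a $k$-uniform word listing every class twice (or three times) in the same cyclic order, giving alternation of every pair of distinct $K_m$-vertices; (ii) restricted to $\{1,2,3\}$ it is $123123$ (or similar) giving the triangle; and (iii) for each $i$, the relative positions of $i$ and the blocks it must be adjacent to versus the blocks it must avoid are arranged as in the $K_2$ proof — adjacent blocks straddle each occurrence of $i$ (pattern $N\,i\,N\,i\,N$-style) while non-adjacent blocks sit "outside" ($i\,N\,N\,i$-style).

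Concretely I expect the word to have the shape: one "opening" pass through some of the blocks with $1,2,3$ interleaved at the right places, a "middle" full pass through all blocks, and a "closing" pass with $1,2,3$ again, so that each $K_m$-vertex occurs exactly twice or three times and each of $1,2,3$ occurs twice. The key design constraint is that $1$ must alternate with $N_{1\overline{2}\overline{3}},N_{12\overline{3}},N_{1\overline{2}3}$ but not with $N_{\overline{1}2\overline{3}},N_{\overline{1}\overline{2}3},N_{\overline{1}23}$; symmetrically for $2$ and $3$. Arranging all three of these nesting requirements simultaneously — since the three "forbidden" sets for $1,2,3$ overlap in complicated ways — is the crux; the natural fix, exactly as in the $K_2$ case, is to put the three "pure" singletons $N_{1\overline{2}\overline{3}}$-type and the "pure pair" sets in a carefully chosen linear order and then place $1,2,3$ so that each clique vertex's forbidden blocks all lie between its two occurrences on the "wrong side." I would write $w$ explicitly, then check the four edge-types and the corresponding non-edge-types by computing the relevant projections $w_{\{\cdots\}}$, just as done for $\overline{B(K_m,K_2)}$.

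The main obstacle I anticipate is precisely finding a single linear arrangement of the six nonempty blocks and the three vertices $1,2,3$ that simultaneously realizes all the "alternate with these three blocks, don't alternate with those three" conditions for each of $1,2,3$ — there is a genuine combinatorial puzzle here because the three clique vertices compete for block positions, and a naive concatenation will, for instance, make $1$ accidentally alternate with $N_{\overline{1}23}$ or fail to alternate with $N_{12\overline{3}}$. Once a correct ordering is guessed (most likely symmetric under cyclic rotation of $\{1,2,3\}$, something like grouping $N_{1\overline{2}\overline{3}},N_{\overline{1}2\overline{3}},N_{\overline{1}\overline{2}3}$ together and $N_{12\overline{3}},N_{\overline{1}23},N_{1\overline{2}3}$ together and threading $1,2,3$ through in rotated positions across the passes), the verification is routine projection-checking of the kind already carried out in the paper. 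A secondary point to be careful about: one must confirm the word is uniform enough that no spurious alternations appear between a $K_m$-vertex and another $K_m$-vertex, which is handled by making the $V(K_m)$-projection a power of a fixed permutation of the blocks, so that every pair of distinct $K_m$-vertices alternates regardless of which blocks they lie in.
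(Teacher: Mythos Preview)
Your plan is exactly the paper's approach: exhibit an explicit uniform word over the six non-empty blocks together with $1,2,3$, then verify alternation and non-alternation by projecting onto the relevant subsets. So strategically there is no divergence.

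However, what you have written is a plan, not a proof, and the entire substance of the argument is the step you defer: actually producing the word. You correctly identify this as ``the crux'' and ``a genuine combinatorial puzzle,'' and then stop. The paper's proof consists precisely of writing down the word
\[
w = 1\,N_{1\overline{2}3}\,2\,N_{1\overline{2}\overline{3}}\,N_{12\overline{3}}\,3\,N_{\overline{1}2\overline{3}}\,1\,N_{\overline{1}23}\,2\,N_{\overline{1}\overline{2}3}\,N_{1\overline{2}3}\,3\,N_{1\overline{2}\overline{3}}\,N_{12\overline{3}}\,1\,N_{\overline{1}2\overline{3}}\,N_{\overline{1}23}\,N_{\overline{1}\overline{2}3}\,N_{1\overline{2}3}\,N_{1\overline{2}\overline{3}}\,2\,N_{12\overline{3}}\,N_{\overline{1}2\overline{3}}\,3\,N_{\overline{1}23}\,N_{\overline{1}\overline{2}3}
\]
and checking the projections. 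Two of your structural guesses are off: the word is $3$-uniform (each block and each of $1,2,3$ appears three times, with $w_{\{1,2,3\}}=(123)^3$ and $w_{V(K_m)}=(N_{1\overline{2}3}N_{1\overline{2}\overline{3}}N_{12\overline{3}}N_{\overline{1}2\overline{3}}N_{\overline{1}23}N_{\overline{1}\overline{2}3})^3$), not $2$-uniform; and the block order that works is not the ``singletons together, pairs together'' grouping you propose but an interleaved one. Since you neither exhibit a candidate word nor argue abstractly that one must exist, the proposal as it stands has a gap exactly where the proof lives.
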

\begin{proof}
Our claim is that the word
$w= 1 N_{1\overline{2}3} 2 N_{1\overline{2}\overline{3}} N_{12\overline{3}} 3 N_{\overline{1}2\overline{3}} 1 N_{\overline{1}23} 2 N_{\overline{1}\overline{2}3} N_{1\overline{2}3} 3 N_{1\overline{2}\overline{3}} N_{12\overline{3}} 1 N_{\overline{1}2\overline{3}} N_{\overline{1}23} N_{\overline{1}\overline{2}3}	
N_{1\overline{2}3} $ $N_{1\overline{2}\overline{3}} 2 N_{12\overline{3}} N_{\overline{1}2\overline{3}} 3 N_{\overline{1}23} N_{\overline{1}\overline{2}3}$ represents the graph $\overline{B(K_m, K_3)}$, where $N_{1\overline{2}\overline{3}}$, $N_{\overline{1}2\overline{3}}$, $N_{12\overline{3}}$, $N_{\overline{1}\overline{2}3}$, $N_{\overline{1}23}$ and $N_{1\overline{2}3}$ are defined above. Now, in the graph $\overline{B(K_m, K_3)}$, if $x$ and $y$, $x,y \in V(\overline{B(K_m, K_3))}$ are connected by an edge then in the word $w$, $x$ and $y$ should alternate. Every possible case for each edge in $E(\overline{B(K_m, K_3))}$ is considered in the following.
\begin{enumerate}
    \item In the graph $\overline{B(K_m, K_3)}$, $i\sim j$, $1\leq i<j\leq 3$. Now, in the word $w$, $w_{\{1,2,3\}}=(123)^3$. So, $1$, $2$ and $3$ are alternating with each other in $w$. 
    \item In the graph $\overline{B(K_m, K_3)}$, $i\sim j$, $\forall i,j\in V(K_m), i\neq j$. We know that $V(K_m)= N_{1\overline{2}\overline{3}}\cup N_{\overline{1}2\overline{3}}\cup N_{12\overline{3}}\cup N_{\overline{1}\overline{2}3}\cup N_{\overline{1}23}\cup N_{1\overline{2}3}$, as $N_{123}=\phi$. Now, in the word $w$, $w_{\{N_{1\overline{2}\overline{3}}, N_{\overline{1}2\overline{3}}, N_{12\overline{3}}, N_{\overline{1}\overline{2}3}, N_{\overline{1}23}, N_{1\overline{2}3}\}}=(N_{1\overline{2}3}  N_{1\overline{2}\overline{3}} N_{12\overline{3}}  N_{\overline{1}2\overline{3}}  N_{\overline{1}23}  N_{\overline{1}\overline{2}3})^3$. Hence, $i$ and $j$, $\forall i,j\in V(K_m), i\neq j$ are alternating in $w$.
    \item In the graph $\overline{B(K_m, K_3)}$, $1\sim i$, $i\in N_{1\overline{2}\overline{3}}\cup N_{12\overline{3}}\cup N_{1\overline{2}3}$. Now, in the word $w$, $w_{\{1,N_{1\overline{2}\overline{3}}, N_{12\overline{3}}, N_{1\overline{2}3}\}}=(1 N_{1\overline{2}3} N_{1\overline{2}\overline{3}} N_{12\overline{3}})^3$. Therefore, $1$ and $i$ are alternating in $w$.
    \item In the graph $\overline{B(K_m, K_3)}$, $2\sim i$, $i\in N_{\overline{1}2\overline{3}}\cup N_{12\overline{3}}\cup N_{\overline{1}23}$. Now, in the word $w$, $w_{\{2, N_{12\overline{3}},N_{\overline{1}2\overline{3}},N_{\overline{1}23}\}}=(2 N_{12\overline{3}}N_{\overline{1}2\overline{3}}N_{\overline{1}23})^3$. Therefore, $2$ and $i$ are alternating in $w$.
    \item In the graph $\overline{B(K_m, K_3)}$, $3\sim i$, $i\in N_{\overline{1}\overline{2}3}\cup N_{\overline{1}23}\cup N_{1\overline{2}3}$. Now, in the word $w$, $w_{\{3,N_{1\overline{2}3}, N_{\overline{1}23},N_{\overline{1}\overline{2}3}\}}=(N_{1\overline{2}3} 3 N_{\overline{1}23}N_{\overline{1}\overline{2}3})^3$. Therefore, $3$ and $i$ are alternating in $w$.
\end{enumerate}

 Since alternation is preserved in $w$ for the edges of the graph $\overline{B(K_m, K_3)}$, we now need to show that for the non-edges of the graph $\overline{B(K_m, K_3)}$, non-alteration is also preserved in $w$. The possible cases are the following.
 \begin{enumerate}
     \item In the graph $\overline{B(K_m, K_3)}$, $1\nsim i$, $i\in N_{\overline{1}2\overline{3}}\cup N_{\overline{1}\overline{2}3}\cup N_{\overline{1}23}$. Now, in the word $w$, $1$ and $i$ should not alternate. As, $w_{\{1,N_{\overline{1}2\overline{3}}, N_{\overline{1}\overline{2}3}, N_{\overline{1}23}\}}=1N_{\overline{1}2\overline{3}}1N_{\overline{1}23}N_{\overline{1}\overline{2}3}1N_{\overline{1}2\overline{3}}N_{\overline{1}23}N_{\overline{1}\overline{2}3}N_{\overline{1}2\overline{3}}N_{\overline{1}23}N_{\overline{1}\overline{2}3}$, therefore, $1$ is not alternating with $i$ in the word $w$. 
     \item In the graph $\overline{B(K_m, K_3)}$, $2\nsim i$, $i\in N_{1\overline{2}\overline{3}}\cup N_{\overline{1}\overline{2}3}\cup N_{1\overline{2}3}$. Now, in the word $w$, $w_{\{2,N_{1\overline{2}\overline{3}}, N_{\overline{1}\overline{2}3}, N_{1\overline{2}3}\}}=N_{1\overline{2}3}2N_{1\overline{2}\overline{3}}2N_{\overline{1}\overline{2}3}N_{1\overline{2}3}N_{1\overline{2}\overline{3}}N_{\overline{1}\overline{2}3}N_{1\overline{2}3}N_{1\overline{2}\overline{3}}2N_{\overline{1}\overline{2}3}$, therefore, $2$ is not alternating with $i$ in the word $w$. 
     \item In the graph $\overline{B(K_m, K_3)}$, $3\nsim i$, $i\in N_{1\overline{2}\overline{3}}\cup N_{\overline{1}2\overline{3}}\cup N_{12\overline{3}}$. Now, in the word $w$, $w_{3,N_{1\overline{2}\overline{3}}, N_{\overline{1}2\overline{3}}, N_{12\overline{3}}}=N_{1\overline{2}\overline{3}}N_{12\overline{3}}3$ $N_{\overline{1}2\overline{3}}3N_{1\overline{2}\overline{3}}N_{12\overline{3}}N_{\overline{1}2\overline{3}}N_{1\overline{2}\overline{3}}N_{12\overline{3}}N_{\overline{1}2\overline{3}}3$, therefore, $3$ is not alternating with $i$ in the word $w$. 
 \end{enumerate}
\end{proof}

\section{Semi-trasitive orientation of a word-representable co-bipartite graph}
In this section, we studied the semi-transitive orientation of a word-representable co-bipartite graph. First, we present similar work done in the case of split graphs. Word-representable split graphs are studied in the paper \cite{kitaev2021word}. In that paper, it was proved that for the word-representable split graph $S_n=(E_{n-m},K_m)$, vertices of $E_{n-m}$ can be classified as described in Lemma \ref{semi-tran-groups}.
Additionally, according to Lemma~\ref{lem-tran-orie}, any semi-transitive orientation of the graph $S_n = (E_{n-m}, K_m)$ induces a transitive orientation on $K_m$. This can be represented schematically in Figure~\ref{schem-structure}, where the longest directed path in $ K_m $ is represented as $ \vec{P} $. It is important to note that although the other edges in $K_m$ are not shown, they do exist.

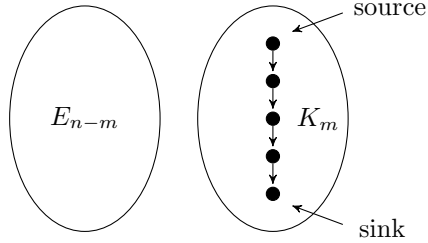
\begin{figure}[H]
\begin{center}

\begin{tikzpicture}[->,>=stealth', shorten >=1pt,node distance=0.5cm,auto,main node/.style={fill,circle,draw,inner sep=0pt,minimum size=5pt}]

\node (1) {};
\node[main node] (2) [right of=1,xshift=5mm] {};
\node[main node] (3) [above of=2] {};
\node[main node] (4) [above of=3] {};
\node[main node] (5) [below of=2] {};
\node[main node] (6) [below of=5] {};

\node (7) [above of=4,yshift=-4mm,xshift=1mm] {};
\node (8) [above right of=7,xshift=6mm]{};
\node (9) [right of=8]{source};

\node (10) [below of=6,yshift=4mm,xshift=1mm] {};
\node (11) [below right of=10,xshift=6mm]{};
\node (12) [right of=11,xshift=-1mm]{sink};

\node [right of=2,xshift=1mm]{$K_m$};
\node [right of=2,xshift=-30mm]{$E_{n-m}$};

\path
(8) edge (7)
(11) edge (10);

\draw [] (-1.5,0) ellipse (10mm and 15mm); 
\draw [] (1,0) ellipse (10mm and 15mm); 

\path
(4) edge (3)
(3) edge (2)
(2) edge (5)
(5) edge (6);

\end{tikzpicture}

\caption{\label{schem-structure} A schematic structure of a semi-transitively oriented split graph}
\end{center}
\end{figure}
Lemmas~\ref{semi-tran-groups} and~\ref{relative-order} below describe the structure of semi-transitive orientations in arbitrary word-representable split graph.

\begin{lemma}\label{semi-tran-groups}[\cite{kitaev2021word}, Lemma 7.1 ] Any semi-transitive orientation of $S_n=(E_{n-m},K_m)$ subdivides the set of all vertices in $E_{n-m}$ into three, possibly empty, groups of the types shown schematically in Figure~\ref{3-groups1}. In that figure,  
\begin{itemize}
\item similarly to Figure~\ref{schem-structure}, the vertical oriented paths are a schematic way to show (parts of) $\vec{P}$;
\item the vertical oriented paths in the types A and B represent up to $m$ consecutive vertices in $\vec{P}$; 
\item the vertical oriented path in the type C contains all $m$ vertices in $\vec{P}$, and it is subdivided into three groups of consecutive vertices, with the middle group possibly containing no vertices; the group of vertices containing the source (resp., sink) is the {\em source-group} (resp., {\em sink-group});
\item The vertices to the left of the vertical paths are from  $E_{n-m}$. 
\end{itemize} 

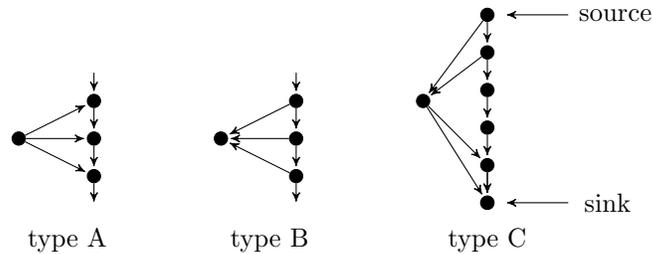
\begin{figure}[H]
\begin{center}

\begin{tabular}{ccc}

\begin{tikzpicture}[->,>=stealth', shorten >=1pt,node distance=0.5cm,auto,main node/.style={fill,circle,draw,inner sep=0pt,minimum size=5pt}]

\node[main node] (1) {};
\node[main node] (2) [right of=1,xshift=5mm] {};
\node[main node] (3) [above of=2] {};
\node (4) [above of=3] {};
\node[main node] (5) [below of=2] {};
\node (6) [below of=5] {};

\node (7) [above of=4,yshift=-4mm,xshift=1mm] {};
\node (8) [above right of=7,xshift=6mm]{};

\node (10) [below of=6,yshift=4mm,xshift=1mm] {};
\node (11) [below right of=10,xshift=6mm]{};

\node (11) [below left of=5,yshift=-5mm]{type A};

\path
(4) edge (3)
(3) edge (2)
(2) edge (5)
(5) edge (6);

\path
(1) edge (2)
(1) edge (3)
(1) edge (5);

\end{tikzpicture}

&
\begin{tikzpicture}[->,>=stealth', shorten >=1pt,node distance=0.5cm,auto,main node/.style={fill,circle,draw,inner sep=0pt,minimum size=5pt}]

\node[main node] (1) {};
\node[main node] (2) [right of=1,xshift=5mm] {};
\node[main node] (3) [above of=2] {};
\node (4) [above of=3] {};
\node[main node] (5) [below of=2] {};
\node (6) [below of=5] {};

\node (7) [above of=4,yshift=-4mm,xshift=1mm] {};
\node (8) [above right of=7,xshift=6mm]{};

\node (10) [below of=6,yshift=4mm,xshift=1mm] {};
\node (11) [below right of=10,xshift=6mm]{};

\node (11) [below left of=5,yshift=-5mm]{type B};

\path
(4) edge (3)
(3) edge (2)
(2) edge (5)
(5) edge (6);

\path
(2) edge (1)
(3) edge (1)
(5) edge (1);

\end{tikzpicture}

&

\begin{tikzpicture}[->,>=stealth', shorten >=1pt,node distance=0.5cm,auto,main node/.style={fill,circle,draw,inner sep=0pt,minimum size=5pt}]

\node[main node] (1) {};
\node[main node] (2) [below right of=1,xshift=5mm] {};
\node[main node] (3) [above of=2] {};
\node[main node] (4) [above of=3] {};
\node[main node] (5) [below of=2] {};
\node[main node] (6) [below of=5] {};
\node[main node] (13) [above of=4] {};
\node[main node] (14) [below of=5] {};

\node (7) [above of=13,yshift=-5mm,xshift=1mm] {};
\node (8) [right of=7,xshift=6mm]{};
\node (9) [right of=8]{source};

\node (10) [below of=14,yshift=5mm,xshift=1mm] {};
\node (11) [right of=10,xshift=6mm]{};
\node (12) [right of=11,xshift=-1mm]{sink};

\node [below of=14]{type C};

\path
(8) edge (7)
(11) edge (10);

\path
(13) edge (1)
(4) edge (1)
(1) edge (14)
(1) edge (5)
(4) edge (3)
(3) edge (2)
(2) edge (5)
(5) edge (6)
(13) edge (4)
(5) edge (14);
\end{tikzpicture}
\end{tabular}

\caption{\label{3-groups1} Three types of vertices in $E_{n-m}$ under a semi-transitive orientation of $(E_{n-m},K_m)$. The vertically oriented paths are a schematic way to show (parts of) $\vec{P}$}
\end{center}
\end{figure}
\end{lemma}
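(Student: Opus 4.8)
The plan is to fix a semi-transitive orientation of $S_n=(E_{n-m},K_m)$ and analyse, for each $v\in E_{n-m}$, how the edges between $v$ and $K_m$ are oriented relative to the directed Hamiltonian path $\vec P\colon p_1\to p_2\to\cdots\to p_m$ that the orientation induces on the clique $K_m$ by Lemma~\ref{lem-tran-orie}. Call $p_j$ an \emph{in-neighbour} of $v$ if $p_j\to v$, and an \emph{out-neighbour} if $v\to p_j$. The trichotomy A/B/C will correspond exactly to $v$ having only out-neighbours, only in-neighbours, or both (an isolated vertex of $E_{n-m}$ fits vacuously into type A). So it suffices to prove: (i) if $v$ has only out-neighbours, they form a set of consecutive vertices of $\vec P$; (ii) symmetrically for only in-neighbours; (iii) if $v$ has both, its in-neighbours form an initial segment $\{p_1,\dots,p_s\}$ of $\vec P$ (all oriented into $v$), its out-neighbours a final segment $\{p_t,\dots,p_m\}$ (all oriented out of $v$), and $p_{s+1},\dots,p_{t-1}$ are non-neighbours of $v$.

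The first step is a pure acyclicity argument: if $v\to p_a$ and $p_b\to v$ with $a<b$, then $p_a\to p_b$ is an edge of the transitive tournament on $K_m$, so $v\to p_a\to p_b\to v$ is a directed cycle, a contradiction. Hence \emph{every in-neighbour of $v$ precedes every out-neighbour of $v$ along $\vec P$}. The rest of the proof consists of shortcut arguments ruling out ``gaps'', all of the following shape: given four vertices carrying a directed path $q_0\to q_1\to q_2\to q_3$ together with the edge $q_0\to q_3$, where $\{q_0,q_2\}$ or $\{q_1,q_3\}$ is a non-edge, the induced subgraph on $\{q_0,q_1,q_2,q_3\}$ is a shortcut (one checks directly: it is acyclic, $q_0$ is its unique source, $q_3$ its unique sink, $q_0\to q_1\to q_2\to q_3$ is a spanning directed path, $q_0\to q_3$ is the shortcutting edge, and it is non-transitive), contradicting shortcut-freeness. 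For (i): if $v\not\sim p_c$ with $a<c<b$ and $v\to p_a,\ v\to p_b$, apply this to $v\to p_a\to p_c\to p_b$ with edge $v\to p_b$. For (ii): symmetrically, to $p_a\to p_c\to p_b\to v$ with edge $p_a\to v$. For (iii): let $p_s$ be the largest-indexed in-neighbour and $p_t$ the smallest-indexed out-neighbour (so $s<t$ by the acyclicity step); for any $c<s$, the quadruple $p_c\to p_s\to v\to p_t$ with edge $p_c\to p_t$ is a shortcut unless $p_c\sim v$, and then the acyclicity step forces $p_c$ to be an in-neighbour, so $\{p_1,\dots,p_s\}$ are all in-neighbours; symmetrically $\{p_t,\dots,p_m\}$ are all out-neighbours, and everything strictly between $p_s$ and $p_t$ is a non-neighbour of $v$.

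Assembling: every $v\in E_{n-m}$ falls into exactly one of the three cases, which are precisely types A, B, C; the ``up to $m$ consecutive vertices'' description for A and B, and the ``all $m$ vertices split into source-group/middle-group/sink-group'' description for C (with $p_1$ the source and $p_m$ the sink, middle possibly empty), are exactly what the three cases yield. This gives the claimed subdivision of $E_{n-m}$.

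The main obstacle is case (iii): it is easy to see via the first two shortcut arguments that the in-neighbours and out-neighbours of a type-C vertex are each consecutive blocks of $\vec P$, but the substantive point is that they must be a \emph{full} initial segment and a \emph{full} final segment — i.e.\ the source-group and sink-group reach $p_1$ and $p_m$ — and the only clean way to force this is the third four-vertex shortcut above. A secondary point needing care is the bookkeeping in verifying, in each application, that the induced four-vertex digraph really meets every clause in the definition of a shortcut (single source, single sink, spanning directed path, shortcutting edge, non-transitivity); this is routine but must be done to legitimately invoke shortcut-freeness.
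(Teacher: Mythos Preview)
Your proof is correct. The paper does not give its own proof of this lemma: it is quoted verbatim as \cite{kitaev2021word}, Lemma~7.1, and the only place the paper gestures at an argument is Observation~\ref{obs41}, whose proof reads in full ``The same proof of Theorem~\ref{semi-tran-groups} is worked for this observation.'' So there is nothing in the present paper to compare against.

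For what it is worth, your argument is exactly the standard one and is the proof one would reconstruct from \cite{kitaev2021word}: use the transitive tournament on $K_m$ to get the Hamiltonian path $\vec P$, use acyclicity to force every in-neighbour of $v$ to precede every out-neighbour along $\vec P$, and then use three four-vertex shortcut configurations to rule out gaps in the out-block (type~A), gaps in the in-block (type~B), and failure of the in-block/out-block to reach the source/sink (type~C). Your identification of case~(iii) as the only place with real content --- namely that the source-group and sink-group must extend all the way to $p_1$ and $p_m$ --- is spot on, and your shortcut $p_c\to p_s\to v\to p_t$ (with shortcutting edge $p_c\to p_t$ and missing edge $p_c v$) is the clean way to force it. Nothing is missing.
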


\begin{lemma}\label{relative-order}[\cite{kitaev2021word}, Lemma 7.2] Let $S_n=(E_{n-m},K_m)$ be oriented semi-transitively. For a vertex $x\in E_{n-m}$ of the type C, presented schematically as
\vspace{-0.8cm}
\begin{center}
\begin{tikzpicture}[->,>=stealth', shorten >=1pt,node distance=0.5cm,auto,main node/.style={fill,circle,draw,inner sep=0pt,minimum size=5pt}]

\node[main node] (1) {};
\node [above of=1,yshift=-5mm,xshift=-3mm] {$x$};
\node[main node] (2) [right of=1,xshift=5mm] {};
\node[main node] (3) [above of=2] {};
\node [right of=2,xshift=-1mm,yshift=-1mm] {$x_s$};
\node [right of=3,xshift=-1mm,yshift=2mm] {$x_1$};
\node [above right of=2,yshift=1mm] {$\vdots$};
\node [right of=6,yshift=5mm,xshift=1mm] {$x_{s+1}$};
\node (4) [above of=3] {};
\node[main node] (5) [below of=2] {};
\node[main node] (6) [below of=5] {};
\node [right of=6,yshift=2mm,xshift=-2mm] {$\vdots$};
\node [below right of=6,yshift=1mm] {$x_t$};

\node (7) [above of=4,yshift=-4mm,xshift=1mm] {};
\node (8) [above right of=7,xshift=6mm]{};

\node (10) [below of=6,yshift=4mm,xshift=1mm] {};
\node (11) [below right of=10,xshift=6mm]{};

\path
(3) edge (2)
(2) edge (5)
(5) edge (6);

\path
(2) edge (1)
(3) edge (1)
(1) edge (5)
(1) edge (6);

\end{tikzpicture}
\end{center}

\noindent 
there is no vertex $y\in E_{n-m}$ of the type A or B, which is connected to both $x_s$ and $x_{s+1}$. Also, there is no vertex $y\in E_{n-m}$ of the type C such that either the source-group or the sink-group of vertices given by $y$ (see the statement of Theorem~\ref{semi-tran-groups} for the definitions) contains both $x_s$ and $x_{s+1}$.
\end{lemma}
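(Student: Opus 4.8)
The plan is to argue by contradiction and, whenever a forbidden vertex $y$ exists, to exhibit an explicit shortcut on four vertices, contradicting semi-transitivity of $S_n=(E_{n-m},K_m)$ (and hence Theorem~\ref{key-thm} is not even needed directly — only the definition of a shortcut is). I would first record the three structural facts that are forced and used in every case: (i) $x_s\to x$ and $x\to x_{s+1}$, directly from the definition of the source-group and sink-group of the type C vertex $x$; (ii) $x$ is non-adjacent to every $y\in E_{n-m}$, since $E_{n-m}$ is an independent set; and (iii) $x_s\to x_{s+1}$, because $\vec P$ realizes the transitive orientation induced on the clique $K_m$ (Lemma~\ref{lem-tran-orie}) and $x_s$ precedes $x_{s+1}$ along $\vec P$.

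Next I would split into two symmetric cases according to the direction of the edges between $y$ and $\{x_s,x_{s+1}\}$. \emph{Case 1: $y$ is of type A, or $y$ is of type C with $x_s,x_{s+1}$ both in the sink-group of $y$.} Then the edges are $y\to x_s$ and $y\to x_{s+1}$, and I claim the subgraph induced on $\{y,x_s,x,x_{s+1}\}$ is a shortcut: its edges are $y\to x_s$, $y\to x_{s+1}$, $x_s\to x$, $x\to x_{s+1}$, $x_s\to x_{s+1}$, it is acyclic, has unique source $y$ and unique sink $x_{s+1}$, contains the directed Hamiltonian path $y\to x_s\to x\to x_{s+1}$, has the shortcutting edge $y\to x_{s+1}$, and is not transitive because $y\to x_s$ and $x_s\to x$ but $y\not\to x$ (by fact (ii)). \emph{Case 2: $y$ is of type B, or $y$ is of type C with $x_s,x_{s+1}$ both in the source-group of $y$.} Then $x_s\to y$ and $x_{s+1}\to y$, and the subgraph induced on $\{x_s,x,x_{s+1},y\}$ has edges $x_s\to x$, $x\to x_{s+1}$, $x_{s+1}\to y$, $x_s\to y$, $x_s\to x_{s+1}$; it is acyclic with unique source $x_s$, unique sink $y$, Hamiltonian path $x_s\to x\to x_{s+1}\to y$, shortcutting edge $x_s\to y$, and it fails transitivity since $x\to x_{s+1}$ and $x_{s+1}\to y$ but $x\not\to y$. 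In both cases we have produced a shortcut, a contradiction; and since the statement only forbids $y$ when it is connected to both $x_s$ and $x_{s+1}$ with $x_s,x_{s+1}$ in a \emph{single} group of $y$, these two cases are exactly what must be excluded.

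I do not expect a genuine obstacle here: the whole content is getting the edge orientations right — those forced by the "type" of $y$ (type A pushes out of $\vec P$, type B pushes into $\vec P$, type C does both, split by its own source/sink groups) together with those forced by $x_s,x_{s+1}$ being in the source-group and sink-group of $x$. The one point worth stating explicitly is that the non-transitivity of the four-vertex gadget always comes for free from the non-adjacency of the two vertices of $E_{n-m}$ involved (namely $x$ and $y$), so once the orientations are pinned down the verification against the bullet-list definition of a shortcut is mechanical.
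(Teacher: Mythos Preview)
The present paper does not give its own proof of this lemma; it is quoted from \cite{kitaev2021word} (Lemma~7.2 there) as background for the later co-bipartite results, so there is nothing in the paper to compare against. Your proof is correct and complete: facts (i)--(iii) are exactly what is needed, the two-case split according to the direction of the edges between $y$ and $\{x_s,x_{s+1}\}$ exhausts the forbidden configurations, and in each case the four-vertex induced subgraph you exhibit satisfies every clause of the shortcut definition, with the failure of transitivity supplied by the non-edge between $x$ and $y$ in the independent set $E_{n-m}$.
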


\begin{theorem}\label{main-orientation}[\cite{kitaev2021word}, Theorem 7.1] An orientation of a split graph $S_n=(E_{n-m},K_m)$ is semi-transitive if and only if 
\begin{itemize} 
\item $K_m$ is oriented transitively,
\item each vertex in $E_{n-m}$ is of one of the three types presented in Figure~\ref{3-groups1}, and 
\item the restrictions in Lemma~\ref{relative-order} are satisfied. 
\end{itemize}\end{theorem}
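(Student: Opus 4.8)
The plan is to prove both implications directly from the definition of a semi-transitive orientation (acyclic and shortcut-free), using that $E_{n-m}$ is an independent set, so every vertex of $E_{n-m}$ has all of its neighbours in the clique $K_m$. Throughout, by Lemma~\ref{lem-tran-orie} any acyclic orientation restricts to a transitive tournament on $K_m$ with a single source and sink; let $\vec{P}\colon v_1\to v_2\to\cdots\to v_m$ be its unique directed Hamiltonian path, so that $v_i\to v_j$ in $K_m$ precisely when $i<j$. For the forward direction, assume the orientation is semi-transitive; then $K_m$ is transitive by Lemma~\ref{lem-tran-orie}, and the restrictions of Lemma~\ref{relative-order} hold since that lemma applies to any semi-transitively oriented split graph, so it remains only to classify a vertex $x\in E_{n-m}$. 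First, acyclicity forces every in-neighbour of $x$ to precede every out-neighbour of $x$ along $\vec{P}$ (an out-neighbour $v_i$ occurring before an in-neighbour $v_j$, i.e. $i<j$, would close the directed triangle $x\to v_i\to v_j\to x$). Second, shortcut-freeness forces the out-neighbours of $x$ to occupy a contiguous block of $\vec{P}$: if $x\to v_i$ and $x\to v_k$ with $i<k$ but $x\nsim v_j$ for some $i<j<k$, then $\{x,v_i,v_{i+1},\dots,v_k\}$ induces a shortcut with source $x$, sink $v_k$, Hamiltonian path $x\to v_i\to\cdots\to v_k$, shortcutting edge $x\to v_k$, and non-adjacent pair $\{x,v_j\}$; the in-neighbours are contiguous by the mirror argument. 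Third, if $x$ has both an in-neighbour and an out-neighbour, with in-block $\{v_a,\dots,v_b\}$ and first out-neighbour $v_c$ ($b<c$), then $a=1$: otherwise $v_{a-1}\nsim x$ and $\{v_{a-1},v_a,\dots,v_b,x,v_c\}$ induces a shortcut with source $v_{a-1}$, sink $v_c$, shortcutting edge $v_{a-1}\to v_c$, and non-adjacent pair $\{v_{a-1},x\}$; symmetrically the out-block ends at $v_m$. The three cases---no in-neighbour, no out-neighbour, or both with the endpoint structure just described---are precisely types A, B, C of Figure~\ref{3-groups1}.

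For the converse, assume the three conditions hold; I would first prove acyclicity. A directed cycle cannot lie inside the transitive tournament $K_m$, so it meets $E_{n-m}$; at each such vertex $x$ it enters from some $u\in K_m$ and leaves to some $w\in K_m$, whence $x$ is of type C (types A and B lack, respectively, in- and out-neighbours) and $u$ precedes $w$ on $\vec{P}$, so $u\to w$ is an edge of $K_m$. Replacing every detour $u\to x\to w$ by the edge $u\to w$ turns the cycle into a closed directed walk of positive length inside $K_m$, which is impossible.

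Next I would establish shortcut-freeness by contradiction: suppose an induced subgraph $H$ is a shortcut, with Hamiltonian path $p_1\to\cdots\to p_k$ ($k\ge 4$), source $p_1$, sink $p_k$, shortcutting edge $p_1\to p_k$, and a non-adjacent pair. Since $K_m$ is a clique, $H$ meets $E_{n-m}$; since $E_{n-m}$ is independent, no two consecutive $p_l$ are both in $E_{n-m}$, and (consecutive clique vertices on the path being either $\vec{P}$-adjacent or separated by a single type-C vertex) the clique vertices among the $p_l$ occur in increasing $\vec{P}$-order. If $H$ contains exactly one vertex $x\in E_{n-m}$, then $H\setminus x$ is a transitive tournament, and checking the three positions of $x$ on the path---source, sink, interior---the contiguity-and-endpoints structure of $x$'s neighbour block forces every $p_l$ to be adjacent to $x$, so $H$ is complete, contradicting the non-adjacent pair.

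The remaining case---$H$ a shortcut with at least two vertices of $E_{n-m}$---is the crux, and is exactly what Lemma~\ref{relative-order} excludes. Since $E_{n-m}$ is independent the shortcutting edge $p_1\to p_k$ has an endpoint in $K_m$; the plan is to combine this with the $\vec{P}$-order of the clique vertices along the path and the type of each independent vertex in $H$ to locate a type-C vertex $x\in H$ whose boundary clique vertices $x_s$ (last in-neighbour) and $x_{s+1}$ (first out-neighbour) are simultaneously attached to a second independent vertex $y\in H$ in one of the forbidden ways---$y$ of type A or B joined to both $x_s$ and $x_{s+1}$, or $y$ of type C with $x_s$ and $x_{s+1}$ lying in one of $y$'s boundary groups---contradicting Lemma~\ref{relative-order}. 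Once this case is closed the orientation is acyclic and shortcut-free, hence semi-transitive. I expect this last step---extracting a Lemma~\ref{relative-order}-forbidden configuration from an arbitrary shortcut on two or more independent-set vertices---to be the main obstacle, the other cases collapsing cleanly to a directed triangle and to a clique, respectively.
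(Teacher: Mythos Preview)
This theorem is quoted from \cite{kitaev2021word} and is not proved in the present paper, so there is no in-paper proof to compare against directly. Assessing your argument on its own merits: the forward direction is correct and complete, and in the converse your acyclicity argument and the single-independent-vertex shortcut case are both sound.

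The only gap is the one you flag yourself: a shortcut $H$ containing at least two vertices of $E_{n-m}$. Your plan for this case is in fact correct and not hard to finish. Since $E_{n-m}$ is independent and $p_1\to p_k$ is an edge, at most one of $p_1,p_k$ lies in $E_{n-m}$; any \emph{interior} independent vertex $p_j$ has clique neighbours $p_{j-1},p_{j+1}$ on the path and is therefore of type~C. If both endpoints are in $K_m$, take two interior type-C vertices $x=p_i$, $y=p_j$ with $i<j$; from $p_{i+1}\le p_{j-1}$ in $\vec{P}$-order one gets $x_s<x_{s+1}\le p_{i+1}\le p_{j-1}\le y_s$, so both $x_s$ and $x_{s+1}$ lie in $y$'s source-group, contradicting Lemma~\ref{relative-order}. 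If $p_1\in E_{n-m}$ (hence type~A) and $x=p_j$ is an interior type-C vertex, then $p_1$'s contiguous neighbour block contains $[p_2,p_k]\supseteq[p_{j-1},p_{j+1}]\ni x_s,x_{s+1}$, again contradicting Lemma~\ref{relative-order}; the case $p_k\in E_{n-m}$ is symmetric. So the ``main obstacle'' dissolves once you track $\vec{P}$-positions carefully.

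For what it is worth, the paper's proof of the analogous co-bipartite result, Theorem~\ref{thm45}, organises the converse differently: rather than splitting on the number of independent-side vertices in the shortcut, it locates the \emph{first} non-adjacent pair $(x_i,x_j)$ along the Hamiltonian path and case-analyses on which side each of $x_{i-1},x_{j-1},x_p$ lies. Your decomposition by the count of $E_{n-m}$-vertices is arguably cleaner in the split-graph setting, since it isolates the role of Lemma~\ref{relative-order} precisely in the multi-vertex case.
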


\begin{theorem}\label{intercahnging} [\cite{kitaev2021word}, Theorem 7.2]
Let $S_n=(E_{n-m},K_m)$ be semi-transitively oriented. Then, any vertex in $E_{n-m}$ of the type A can be replaced by a vertex of the type B, and vice versa, keeping orientation semi-transitive. 
\end{theorem}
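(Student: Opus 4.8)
The plan is to deduce the statement directly from the structural characterization in Theorem~\ref{main-orientation}. Let $x\in E_{n-m}$ be a vertex of type A under a given semi-transitive orientation of $S_n=(E_{n-m},K_m)$, and let $\vec{P}$ be the (unique) Hamiltonian directed path of the transitively oriented clique $K_m$. Define a new orientation by reversing every edge incident to $x$ and leaving all other edges unchanged; I claim this new orientation is again semi-transitive and that $x$ is now of type B. Since the procedure only reorients edges joining $x$ to $K_m$, it changes neither the undirected graph nor the transitive orientation of $K_m$ (in particular $\vec{P}$ is the same), so the first bullet of Theorem~\ref{main-orientation} still holds.

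For the second bullet, the reversal does not change the vertex set, does not change which pairs of vertices are adjacent, and does not change the orientation seen by any vertex of $E_{n-m}$ other than $x$; hence every such vertex keeps its type A, B, or C. As for $x$ itself: its neighbourhood is unchanged, so it is still a block of consecutive vertices of $\vec{P}$, and the edges joining $x$ to that block, previously all pointing out of $x$, now all point into $x$. This is exactly the description of a type B vertex, so after the reversal every vertex of $E_{n-m}$ is of one of the three admissible types.

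The one point requiring a moment's care is the third bullet, namely the restrictions of Lemma~\ref{relative-order}. These are phrased entirely in terms of (i) which vertices of $E_{n-m}$ are of type A or B and to which $\vec{P}$-consecutive pairs $\{x_s,x_{s+1}\}$ coming from type C vertices they are adjacent, and (ii) the source-group/sink-group partition associated with each type C vertex. The reversal leaves every type C vertex and every such partition untouched, leaves every adjacency untouched, and keeps $x$ of type A-or-B; therefore any restriction satisfied before the reversal is still satisfied afterwards. All three bullets of Theorem~\ref{main-orientation} hold, so the new orientation is semi-transitive. The ``vice versa'' direction is the identical argument with the roles of types A and B interchanged.

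The main obstacle, such as it is, is simply making sure the reversal does not silently break the type-C bookkeeping, which the paragraph above settles by noting that nothing about type C vertices changes. If one wishes to avoid invoking Theorem~\ref{main-orientation}, the same conclusion follows by a direct shortcut-chasing argument: if the reversed orientation contained a shortcut $H$, then $H$ must use $x$ (otherwise $H$ would already be a shortcut in the original orientation), and since every edge of $H$ at $x$ now enters $x$, the vertex $x$ is the unique sink of $H$; undoing the reversal inside $H$ yields, on the same vertex set, an acyclic digraph with $x$ as its unique source, a directed Hamiltonian path starting at $x$, a source-to-sink shortcutting edge, and the same non-adjacent pair of vertices (adjacency being unaffected by reorienting) --- that is, a shortcut in the original orientation, a contradiction. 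The only fiddly part of that route is verifying that ``unique source'' and ``unique sink'' transfer correctly under the local reversal, which is precisely why the characterization-based proof is the cleaner one to write down.
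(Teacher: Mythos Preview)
This theorem is not proved in the present paper; it is merely quoted from \cite{kitaev2021word} as background for the split-graph analogy in Section~4. There is consequently no in-paper proof to compare yours against.

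That said, your argument via Theorem~\ref{main-orientation} is correct and is the natural one: the restrictions in Lemma~\ref{relative-order} are phrased symmetrically in types A and B (the lemma only speaks of a vertex ``of the type A or B'' adjacent to both $x_s$ and $x_{s+1}$), and nothing about any type~C vertex is touched by reversing the edges at a single type-A vertex $x$. Since $E_{n-m}$ is independent, the only edges at $x$ go to $K_m$, so the transitive orientation of $K_m$, the Hamiltonian path $\vec{P}$, and the type of every other independent-set vertex are all preserved; $x$ itself keeps the same consecutive block of neighbours on $\vec{P}$ and simply switches from all-out to all-in, i.e.\ from type~A to type~B. All three bullets of Theorem~\ref{main-orientation} therefore survive, and the new orientation is semi-transitive. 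Your caveat about the direct shortcut-chasing alternative is also well placed: reconstructing a shortcut in the original orientation from one in the reversed orientation requires checking that the unique-source/unique-sink and Hamiltonian-path conditions transfer under the local reversal, which is doable but noticeably messier than the characterization route you chose as your main line.
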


We now characterize the vertices of the word-representable co-bipartite graph $\overline{B(K_m,K_n)}$. Since both partitions are complete graphs, according to Lemma \ref{lem-tran-orie}, any semi-transitive orientation of the graph $\overline{B(K_m,K_n)}$ induces a transitive orientation on both $K_m$ and $K_n$. We use a similar approach to that described in Theorem \ref{semi-tran-groups}. 

We subdivide the vertices of the graph $\overline{B(K_m,K_n)}$ into three types: type $A$, type $B$, and type $C$, as mentioned in Theorem \ref{semi-tran-groups}. For all vertices $x \in V(K_m)$, we identify $\vec{P_1}$ as the longest directed path in $K_n$. Similarly, for all vertices $y \in V(K_n)$, we define $\vec{P_2}$ as the longest directed path in $K_m$.

Our observation is that Theorem \ref{semi-tran-groups} also holds for the graph $\overline{B(K_m,K_n)}$ when considering the vertices of type $A$, type $B$, and type $C$, as previously described.

\begin{observation}\label{obs41}
    Any semi-transitive orientation of $\overline{B(K_m,K_n)}$ subdivides the set of all vertices into three, possibly empty, groups of the types shown schematically in Figure~\ref{3-groups}. 
\begin{itemize}
\item similarly to Figure~\ref{schem-structure}, the vertical oriented paths are a schematic way to show (parts of) $\vec{P_1}$ for $V(K_m)$ and $\vec{P_2}$ for $V(K_n)$ ;
\item the vertical oriented paths in the types $A$ and $B$ represent up to $l$ consecutive vertices in $\vec{P_1}$ for $V(K_m)$ and $\vec{P_2}$ for $V(K_n)$; 
\item the vertical oriented path in the type $C$ contains all $l$ vertices in $\vec{P_1}$ for $V(K_m)$ and $\vec{P_2}$ for $V(K_n)$, and it is subdivided into three groups of consecutive vertices, with the middle group possibly containing no vertices; the group of vertices containing the source (resp., sink) is the {\em source-group} (resp., {\em sink-group});
\end{itemize} 

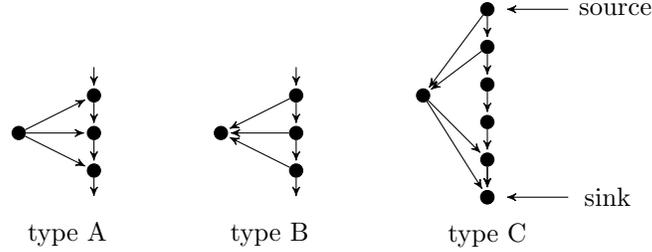
\begin{figure}[H]
\begin{center}

\begin{tabular}{ccc}

\begin{tikzpicture}[->,>=stealth', shorten >=1pt,node distance=0.5cm,auto,main node/.style={fill,circle,draw,inner sep=0pt,minimum size=5pt}]

\node[main node] (1) {};
\node[main node] (2) [right of=1,xshift=5mm] {};
\node[main node] (3) [above of=2] {};
\node (4) [above of=3] {};
\node[main node] (5) [below of=2] {};
\node (6) [below of=5] {};

\node (7) [above of=4,yshift=-4mm,xshift=1mm] {};
\node (8) [above right of=7,xshift=6mm]{};

\node (10) [below of=6,yshift=4mm,xshift=1mm] {};
\node (11) [below right of=10,xshift=6mm]{};

\node (11) [below left of=5,yshift=-5mm]{type A};

\path
(4) edge (3)
(3) edge (2)
(2) edge (5)
(5) edge (6);

\path
(1) edge (2)
(1) edge (3)
(1) edge (5);

\end{tikzpicture}

&
\begin{tikzpicture}[->,>=stealth', shorten >=1pt,node distance=0.5cm,auto,main node/.style={fill,circle,draw,inner sep=0pt,minimum size=5pt}]

\node[main node] (1) {};
\node[main node] (2) [right of=1,xshift=5mm] {};
\node[main node] (3) [above of=2] {};
\node (4) [above of=3] {};
\node[main node] (5) [below of=2] {};
\node (6) [below of=5] {};

\node (7) [above of=4,yshift=-4mm,xshift=1mm] {};
\node (8) [above right of=7,xshift=6mm]{};

\node (10) [below of=6,yshift=4mm,xshift=1mm] {};
\node (11) [below right of=10,xshift=6mm]{};

\node (11) [below left of=5,yshift=-5mm]{type B};

\path
(4) edge (3)
(3) edge (2)
(2) edge (5)
(5) edge (6);

\path
(2) edge (1)
(3) edge (1)
(5) edge (1);

\end{tikzpicture}

&

\begin{tikzpicture}[->,>=stealth', shorten >=1pt,node distance=0.5cm,auto,main node/.style={fill,circle,draw,inner sep=0pt,minimum size=5pt}]

\node[main node] (1) {};
\node[main node] (2) [below right of=1,xshift=5mm] {};
\node[main node] (3) [above of=2] {};
\node[main node] (4) [above of=3] {};
\node[main node] (5) [below of=2] {};
\node[main node] (6) [below of=5] {};
\node[main node] (13) [above of=4] {};
\node[main node] (14) [below of=5] {};

\node (7) [above of=13,yshift=-5mm,xshift=1mm] {};
\node (8) [right of=7,xshift=6mm]{};
\node (9) [right of=8]{source};

\node (10) [below of=14,yshift=5mm,xshift=1mm] {};
\node (11) [right of=10,xshift=6mm]{};
\node (12) [right of=11,xshift=-1mm]{sink};

\node [below of=14]{type C};

\path
(8) edge (7)
(11) edge (10);

\path
(13) edge (1)
(4) edge (1)
(1) edge (14)
(1) edge (5)
(4) edge (3)
(3) edge (2)
(2) edge (5)
(5) edge (6)
(13) edge (4)
(5) edge (14);
\end{tikzpicture}
\end{tabular}

\caption{\label{3-groups} Three types of vertices in $K_m$ and $K_n$ }
\end{center}
\end{figure}

\end{observation}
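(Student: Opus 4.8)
\textbf{Proof strategy for Observation~\ref{obs41}.}
The plan is to reduce the statement to Lemma~\ref{semi-tran-groups} by exploiting the structural symmetry between the two cliques of $\overline{B(K_m,K_n)}$. Fix a semi-transitive orientation of $\overline{B(K_m,K_n)}$. By Lemma~\ref{lem-tran-orie}, this orientation induces transitive orientations on both $K_m$ and $K_n$, each with a single source and single sink; call the resulting longest directed paths $\vec{P_1}$ (in $K_m$, with $l=m$ vertices, when we are classifying vertices of $K_n$) and $\vec{P_2}$ (in $K_n$, with $l=n$ vertices, when we are classifying vertices of $K_m$). The key observation is that, from the point of view of a single vertex $y\in V(K_n)$, the subgraph induced by $\{y\}\cup V(K_m)$ is precisely a split graph $S_{m+1}=(E_1,K_m)$ with the one-element independent set $E_1=\{y\}$: indeed $V(K_m)$ is a clique, $y$ is an isolated vertex within $V(K_n)\cap(\{y\}\cup V(K_m))=\{y\}$, and the edges between $y$ and $V(K_m)$ are arbitrary. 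More generally, for a fixed $y$, the induced subgraph on $V(K_m)\cup\{y\}$ need not capture all constraints, so instead I consider, for each $y\in V(K_n)$, the split graph obtained by deleting all edges inside $V(K_n)$ except those incident to $y$ — but cleaner is to argue directly: restrict attention to the acyclic shortcut-free orientation and apply Lemma~\ref{semi-tran-groups} ``one partition at a time.''

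The main steps are as follows. First, for the clique $K_n$ playing the role of the ``independent-set side'' and $K_m$ playing the role of $K_m$: although $V(K_n)$ is not independent, I claim each individual vertex $y\in V(K_n)$, when considered together with $\vec{P_2}\subseteq K_m$, must fall into one of the three patterns of Figure~\ref{3-groups}. The reason is that any violation of the three-type classification for $y$ relative to $\vec{P_2}$ would, by the proof of Lemma~\ref{semi-tran-groups}, produce a shortcut on vertices lying in $\{y\}\cup V(K_m)$ — and this set induces a subgraph of $\overline{B(K_m,K_n)}$, so a shortcut there is a shortcut in $\overline{B(K_m,K_n)}$, contradicting semi-transitivity. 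Concretely: if $y$ had both an in-neighbor $x_i\to y$ and an out-neighbor $y\to x_j$ on $\vec{P_2}$ with $x_j$ appearing before $x_i$ along $\vec{P_2}$, or if $y$'s neighborhood on $\vec{P_2}$ were not ``an initial segment, a final segment, or everything,'' we would extract a semi-cycle on $y$, $x_i$, $x_j$ and an intermediate non-neighbor, yielding a shortcut. This is exactly the case analysis in \cite{kitaev2021word}, Lemma 7.1, applied verbatim, since it only ever uses that $V(K_m)$ is a transitively oriented clique and $y$ is a single outside vertex. Running the identical argument with the roles of $K_m$ and $K_n$ swapped handles the vertices of $V(K_m)$ relative to $\vec{P_1}$. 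Thus every vertex of $\overline{B(K_m,K_n)}$ is of type $A$, $B$, or $C$ with respect to the appropriate longest path.

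The last step is to check the schematic features listed in the bullet points: that type $A$ and type $B$ vertices attach to ``up to $l$ consecutive'' vertices of the relevant $\vec{P}$, and that a type $C$ vertex is adjacent to all $l$ vertices and splits them into a source-group, a (possibly empty) middle group, and a sink-group. These are immediate consequences of the three-type classification once it is established: ``consecutive'' and the source/middle/sink partition are just restatements of the fact that the neighborhood along a transitively oriented path is an up-set, a down-set, or the whole path (for $A$, $B$) and that the non-neighborhood along the path forms a contiguous middle block (for $C$) — otherwise a shortcut reappears. I expect the \emph{main obstacle} to be purely expository rather than mathematical: one must be careful that ``induced subgraph'' is used correctly, i.e. that deleting the other vertices of $V(K_n)$ really does not destroy the shortcut witnessing a violation — this holds because a shortcut is by definition induced on the vertices of a semi-cycle, and our semi-cycle lives inside $\{y\}\cup V(K_m)$, so no outside vertices are needed. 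Once that point is made cleanly, the observation follows directly from Lemma~\ref{semi-tran-groups} applied symmetrically to each side.
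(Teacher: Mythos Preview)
Your proposal is correct and matches the paper's approach: the paper's own proof is the single sentence ``The same proof of Theorem~\ref{semi-tran-groups} [sic; Lemma~\ref{semi-tran-groups}] is worked for this observation,'' and what you have written is exactly a careful unpacking of why that proof transfers, namely that for each $y\in V(K_n)$ the induced subgraph on $\{y\}\cup V(K_m)$ is a split graph $(E_1,K_m)$ to which Lemma~\ref{semi-tran-groups} applies verbatim (and symmetrically with the roles of $K_m$ and $K_n$ swapped). One very minor quibble: for types $A$ and $B$ the neighbourhood along $\vec{P}$ is a \emph{consecutive interval}, not necessarily an up-set or down-set of $\vec{P}$; your shortcut argument already gives consecutiveness directly, so just drop the up-set/down-set phrasing.
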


\begin{proof}
    The same proof of Theorem \ref{semi-tran-groups} is worked for this observation. 
\end{proof}
One important note is that if $ x $ is a type $ C $ vertex in a semi-transitive orientation of the graph $ \overline{B(K_m, K_n)} $, and $ P = a_1a_2\cdots a_l $ represents a directed path for $ x $, then $ a_1 $ and $ a_l $ must be a source and a sink, respectively, in the transitive orientation of the complete graph $ K_i $, where $ \{a_1, a_l\} \in V(K_i) $.

\begin{lemma}\label{lm41}
    Suppose the graph $\overline{B(K_m,K_n)}$ is word-representable and $S$ is a semi-transitive orientation of the graph $\overline{B(K_m,K_n)}$. Suppose $\{x,y\}\in V(K_m)$ and $x$ is a type $A$ vertex and $y$ is a type $B$ vertex and the orientation of the edge between $x$ and $y$ is $y\rightarrow x$. Then, $x$ and $y$ do not have any common neighbour among the vertices of $K_n$.
\end{lemma}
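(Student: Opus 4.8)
The plan is to argue by contradiction, exploiting the shortcut-free property of the semi-transitive orientation $S$. Suppose, for contradiction, that $x$ (type $A$) and $y$ (type $B$) share a common neighbour $z \in V(K_n)$, where the edge between $x$ and $y$ is oriented $y \to x$. Since $x$ is of type $A$ with respect to the path $\vec{P_1}$ in $K_n$, the edge between $x$ and $z$ points out of $x$, i.e. $x \to z$; since $y$ is of type $B$, the edge between $y$ and $z$ points into $y$, i.e. $z \to y$. Combining these with $y \to x$ gives a directed triangle on $\{x,y,z\}$, namely $z \to y \to x \to z$, which is a directed cycle. This contradicts the assumption that $S$ is acyclic, hence semi-transitive.

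The first step is to pin down the orientations of the two edges $\{x,z\}$ and $\{y,z\}$ from the type definitions in Observation~\ref{obs41}. A type $A$ vertex has all of its edges to the relevant clique ($K_n$, in this case, since $x\in V(K_m)$) directed \emph{away} from it (it is a ``local source''), while a type $B$ vertex has all such edges directed \emph{into} it (it is a ``local sink''). Thus the common neighbour $z\in V(K_n)$ forces $x\to z$ and $z\to y$. The second step is to combine with the hypothesized orientation $y\to x$ of the cross-edge in $K_m$ to close the $3$-cycle. The third step is simply to invoke acyclicity (part of the definition of semi-transitive orientation, and also guaranteed by Theorem~\ref{key-thm} via word-representability) to derive the contradiction.

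I do not expect any serious obstacle here; the statement is essentially an immediate consequence of the local source/sink behaviour encoded in the type $A$/type $B$ classification together with acyclicity. The only point requiring minor care is making sure the type classification is being applied with respect to the correct clique: $x$ and $y$ lie in $V(K_m)$, so their types are defined relative to the directed path $\vec{P_1}$ in $K_n$, and hence their edges to the common neighbour $z\in V(K_n)$ are exactly the edges governed by that classification. Once this is noted, the directed triangle $x\to z\to y\to x$ appears immediately and the proof is complete.
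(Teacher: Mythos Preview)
Your proof is correct and follows essentially the same approach as the paper: assume a common neighbour $z\in V(K_n)$, use the type~$A$/type~$B$ classification to obtain $x\to z$ and $z\to y$, and then observe that together with $y\to x$ this yields a directed $3$-cycle, contradicting acyclicity of $S$. The paper's argument is identical up to notation (it calls the common neighbour $x_s$).
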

\begin{proof}
    Suppose, $x$ and $y$ have a common neighbour $x_s\in V(K_n)$. As, $x$ is type $A$, the edge between $x$ and $x_s$ is oriented as $x\rightarrow x_s$. Now, the edge between $y$ and $x_s$ are oriented as $x_s\rightarrow y$ because $y$ is type $B$. Then, the induced subgraph $y$, $x$ and $x_s$ has a cyclic orientation. However, it is not possible in the semi-transitive orientation $S$.
\end{proof}

\begin{lemma} \label{lm42}
    Suppose the graph $\overline{B(K_m,K_n)}$ is word-representable and $S$ is a semi-transitive orientation of the graph $\overline{B(K_m,K_n)}$. Suppose $\{x,y\}\in V(K_m)$ and $\{x_s,x_{s+1}\}\in V(K_n)$, and the orientation of the edges between $\{x,y\}$ is $x \rightarrow y$, and between $\{x_s,x_{s+1}\}$ is $x_s \rightarrow x_{s+1}$ in the semi-transitive orientation $S$. Then, the semi-transitive orientation $S$ follows the following conditions:
    \begin{enumerate}
        \item If the orientation of the edges between $\{x_s,x\}$ is $x_s\rightarrow x$ and between $\{y,x_{s+1}\}$ is $y\rightarrow x_{s+1}$ in the semi-transitive orientation $S$, then $x\rightarrow x_{s+1}$ and $x_s\rightarrow y$ in the semi-transitive orientation $S$.
        \item If the orientation of the edges between $\{x,x_{s+1}\}$ is $x\rightarrow x_{s+1}$ and between $\{x_s,x\}$ is $y\rightarrow x_s$ in the semi-transitive orientation $S$, then $x\rightarrow x_s$ and $y\rightarrow x_{s+1}$ in the semi-transitive orientation $S$.
    \end{enumerate}
\end{lemma}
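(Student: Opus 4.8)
The plan is to deduce both statements by applying the shortcut-freeness of $S$ to the induced subdigraph $H$ on the four vertices $x,y,x_s,x_{s+1}$. First I would record the elementary set-up: since $x,y\in V(K_m)$ and $x_s,x_{s+1}\in V(K_n)$ and these two vertex sets are disjoint, the four vertices are pairwise distinct, $\{x,y\}$ is an edge of $K_m$ (oriented $x\rightarrow y$ by hypothesis) and $\{x_s,x_{s+1}\}$ is an edge of $K_n$ (oriented $x_s\rightarrow x_{s+1}$ by hypothesis). Hence, among the six pairs of these four vertices, four are already known to be edges (the two inside the cliques and the two cross-edges assumed in the hypothesis of the relevant part), and the only pairs whose adjacency is still open are exactly the two pairs named in the conclusion. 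So it is enough to show each of those two pairs is an edge and that acyclicity then forces the stated orientation.

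For part~(1), the assumed orientations $x_s\rightarrow x$, $x\rightarrow y$, $y\rightarrow x_{s+1}$ form a directed Hamiltonian path on $\{x_s,x,y,x_{s+1}\}$ whose source is $x_s$ and sink is $x_{s+1}$, and the $K_n$-edge $x_s\rightarrow x_{s+1}$ joins that source to that sink. Thus the spanning subdigraph of $H$ with arcs $x_s\rightarrow x$, $x\rightarrow y$, $y\rightarrow x_{s+1}$, $x_s\rightarrow x_{s+1}$ is a semi-cycle (it is the directed $4$-cycle $x_s\rightarrow x\rightarrow y\rightarrow x_{s+1}\rightarrow x_s$ with the arc $x_{s+1}\rightarrow x_s$ reversed). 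Because $S$ is acyclic, so is $H$; if $\{x,x_{s+1}\}$ or $\{x_s,y\}$ were a non-edge, then $H$ would be an acyclic digraph induced on the vertex set of a semi-cycle and containing a non-adjacent pair, that is, a shortcut, contradicting the semi-transitivity of $S$. Hence both $\{x,x_{s+1}\}$ and $\{x_s,y\}$ are edges of $\overline{B(K_m,K_n)}$. Their orientations are now forced by acyclicity of $H$: the path $x\rightarrow y\rightarrow x_{s+1}$ excludes $x_{s+1}\rightarrow x$, so $x\rightarrow x_{s+1}$; and $x_s\rightarrow x\rightarrow y$ excludes $y\rightarrow x_s$, so $x_s\rightarrow y$.

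Part~(2) follows by the same template with a different Hamiltonian path: the assumed orientations single out the directed path $x\rightarrow y\rightarrow x_s\rightarrow x_{s+1}$, whose source-to-sink chord $x\rightarrow x_{s+1}$ is present by hypothesis; so the arcs $x\rightarrow y$, $y\rightarrow x_s$, $x_s\rightarrow x_{s+1}$, $x\rightarrow x_{s+1}$ again form a semi-cycle inside $S[\{x,y,x_s,x_{s+1}\}]$. Shortcut-freeness then forces $\{x,x_s\}$ and $\{y,x_{s+1}\}$ to be edges, and acyclicity forces $x\rightarrow x_s$ (from $x\rightarrow y\rightarrow x_s$) and $y\rightarrow x_{s+1}$ (from $y\rightarrow x_s\rightarrow x_{s+1}$).

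I do not anticipate any real obstacle: the argument is a direct unpacking of the definitions of semi-cycle, shortcut and semi-transitive orientation. The only point requiring a little care is the bookkeeping claim that, in the $4$-vertex induced subdigraph, every pair except the two appearing in the conclusion is already an edge — two because $K_m$ and $K_n$ are cliques, two by hypothesis — since this is precisely what makes a missing conclusion-edge into a non-adjacent pair among the vertices of a semi-cycle, hence a shortcut.
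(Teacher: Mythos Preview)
Your proposal is correct and follows essentially the same approach as the paper: in each part you exhibit the directed Hamiltonian path on the four vertices together with the source--sink edge, observe that a missing conclusion-edge would make the induced subdigraph a shortcut, and then use acyclicity (ruling out a $3$-cycle) to pin down the orientation of each forced edge. Your write-up is in fact somewhat more careful than the paper's, which phrases the shortcut condition as requiring \emph{both} remaining pairs to be non-edges, whereas you correctly note that \emph{either} missing edge already yields a shortcut.
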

\begin{proof}
    Since $S$ is the semi-transitive orientation of the graph $\overline{B(K_m,K_n)}$, there does not exist any shortcut or cyclic orientation in $S$. Now, we are proving these two conditions in the following:
   \begin{enumerate}
       \item According to the mentioned orientation in the first condition, we can obtain the following orientation among the vertices $x$, $y$, $x_s$ and $x_{s+1}$.

 \begin{figure}[H]
\begin{center}
\begin{tikzpicture}[node distance=1cm,auto,main node/.style={circle,draw,inner sep=1pt,minimum size=5pt}]

\node (1) {$x_s$};
\node (2) [ right of=1] {$x$};
\node (3) [ right of=2] {$y$};
\node (4) [ right of=3] {$x_{s+1}$};

\draw[->] (1) -- (2);
\draw[->] (2) -- (3);
\draw[->] (3) -- (4);
\draw [->] (1) to [out=60, in=120] (4);

\end{tikzpicture}

\caption{\label{fig3} orientation among the vertices $x$, $y$, $x_s$ and $x_{s+1}$}
\end{center}
\end{figure}
We can clearly see that this orientation creates a shortcut if $x$ and $y$ are not adjacent with $x_{s+1}$ and $x_s$, respectively. Now, if we orient the edge between $\{x_{s+1},x\}$ as $x_{s+1}\rightarrow x$, then $x$, $y$ and $x_{s+1}$ create a cyclic orientation. However, it is not possible in the semi-transitive orientation $S$. Therefore,  the edge between $\{x_{s+1},x\}$ is oriented as $x\rightarrow x_{s+1}$. Similarly, if we orient the edge between $\{y,x_s\}$ as $y\rightarrow x_s$, then $x_s$, $x$ and $y$ create a cyclic orientation. However, it is not possible in the semi-transitive orientation $S$. Therefore, the edge between $\{y,x_s\}$ is oriented as $x_s\rightarrow y$.

\item According to the mentioned orientation in the second condition, we can obtain the following orientation among the vertices $x$, $y$, $x_s$ and $x_{s+1}$.

\begin{figure}[H]
\begin{center}
\begin{tikzpicture}[node distance=1cm,auto,main node/.style={circle,draw,inner sep=1pt,minimum size=5pt}]

\node (1) {$x$};
\node (2) [ right of=1] {$y$};
\node (3) [ right of=2] {$x_s$};
\node (4) [ right of=3] {$x_{s+1}$};

\draw[->] (1) -- (2);
\draw[->] (2) -- (3);
\draw[->] (3) -- (4);
\draw [->] (1) to [out=60, in=120] (4);

\end{tikzpicture}

\caption{\label{fig4} orientation among the vertices $x$, $y$, $x_s$ and $x_{s+1}$}
\end{center}
\end{figure}
We can clearly see that this orientation creates a shortcut if $x$ and $y$ are not adjacent with $x_s$ and $x_{s+1}$, respectively. Now, if we orient the edge between $\{x,x_s\}$ as $x_s\rightarrow x$, then $x$, $y$, and $x_s$ create a cyclic orientation. However, it is not possible in the semi-transitive orientation $S$. Therefore, the edge between $\{x,x_s\}$ is oriented as $x\rightarrow x_s$. Similarly, if we orient the edge between $\{x_{s+1},y\}$ as $x_{s+1}\rightarrow y$, then $y$, $x_s$, and $x_{s+1}$ create a cyclic orientation. However, it is not possible in the semi-transitive orientation $S$. Therefore, the edge between $\{x_{s+1},y\}$ is oriented as $y\rightarrow x_{s+1}$.
   \end{enumerate}
\end{proof}

\begin{lemma}\label{lm43}
Suppose the graph $\overline{B(K_m,K_n)}$ is word-representable and $S$ is a semi-transitive orientation of the graph. Let $x\in V(\overline{B(K_m,K_n)})$, and $x$ is a type $C$ vertex. Suppose $x_s$ and $x_{s+1}$ are the last and first vertex of the source and sink group, respectively.
\begin{figure}[H]
\begin{center}
\begin{tikzpicture}[node distance=1cm,auto,main node/.style={fill,circle,draw,inner sep=1pt,minimum size=5pt}]

\node [main node](1) {};
\node [main node](2) [ below of=1] {};
\node [right of=2,xshift=-5mm] {$x_s$};
\node [main node](3) [ below of=2] {};
\node [main node](4) [ below of=3] {};
\node [right of=4,xshift=-5mm] {$x_{s+1}$};
\node [main node](5) [ below of=4] {};
\node [main node](6) [ left of=3] {};
\node [left of=6,xshift=5mm] {$x$};

\draw[->] (1) -- (2);
\draw[->] (2) -- (3);
\draw[->] (3) -- (4);
\draw[->] (4) -- (5);
\draw[->] (1) -- (6);
\draw[->] (2) -- (6);
\draw[->] (6) -- (4);
\draw[->] (6) -- (5);

\end{tikzpicture}

\caption{\label{fig5} type $C$ vertex $x$.}
\end{center}
\end{figure}
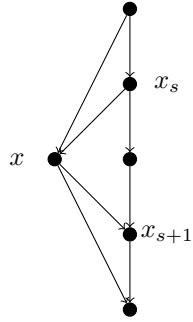
Now, for $y\in V(\overline{B(K_m,K_n)})$, if $x$ and $y$ are in the same partition in the graph $V(\overline{B(K_m,K_n)})$, then the following conditions hold for $y$.

\begin{enumerate}
    \item If the edges between $x$ and $y$ are oriented as $x\rightarrow y$, $y$ is not a type $A$ vertex that is adjacent to both $x_s$ and $x_{s+1}$. Also, if $y$ is type $C$, then $x_s$ cannot contain in the sink group of $y$.
    \item If the edges between $x$ and $y$ are oriented as $y\rightarrow x$, $y$ is not a type $B$ vertex that is adjacent to both $x_s$ and $x_{s+1}$. Also, if $y$ is type $C$, then $x_{s+1}$ cannot contain in the source group of $y$.
\end{enumerate}
\end{lemma}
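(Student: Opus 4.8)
The plan is to mimic the proof of Lemma~\ref{lm42} (and the way Lemma~\ref{relative-order} is used for split graphs), arguing by contradiction on a small induced subgraph and showing that either a directed cycle or a shortcut appears, contradicting semi-transitivity of $S$. Throughout I will use the observation, stated just before this lemma, that in the type~$C$ picture for $x$ the vertex $x_s$ is the last vertex of the source-group and $x_{s+1}$ is the first vertex of the sink-group, so that along $\vec P$ we have $x_s\rightarrow x_{s+1}$, while the orientation of the edges incident to $x$ is $x_s\rightarrow x$ (and everything above $x_s$ into $x$) and $x\rightarrow x_{s+1}$ (and $x$ into everything below $x_{s+1}$).

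For part~(1), assume the edge between $x$ and $y$ is oriented $x\rightarrow y$, with $x,y$ in the same partition (say $K_m$), and suppose first that $y$ is a type~$A$ vertex adjacent to both $x_s$ and $x_{s+1}$. Since $y$ is type~$A$, both edges from $y$ point into the clique on $\vec P$, so $y\rightarrow x_s$ and $y\rightarrow x_{s+1}$. Together with $x\rightarrow y$, $x_s\rightarrow x$ and $x_s\rightarrow x_{s+1}$ this forces a contradiction: consider the vertices $x_s, x, y$. We have $x_s\rightarrow x\rightarrow y$, and the edge between $x_s$ and $y$ is $y\rightarrow x_s$, which is a directed $3$-cycle — impossible in an acyclic orientation. (If instead $y$ is not adjacent to $x_s$ but still forced the problematic configuration, one instead uses $x_s\rightarrow x_{s+1}$ together with a semi-cycle on $x_s, x, y, x_{s+1}$ to produce a shortcut with shortcutting edge $x_s\rightarrow x_{s+1}$; but the cleaner route is the $3$-cycle above, so I would present that.) For the second assertion of~(1), suppose $y$ is type~$C$ and $x_s$ lies in the sink-group of $y$. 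Then, by the orientation pattern of a type~$C$ vertex, $y\rightarrow x_s$. Again $x_s\rightarrow x\rightarrow y$ and $y\rightarrow x_s$ give a directed $3$-cycle on $\{x_s,x,y\}$, contradicting acyclicity.

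Part~(2) is the mirror image. Assume the edge is oriented $y\rightarrow x$. If $y$ is a type~$B$ vertex adjacent to both $x_s$ and $x_{s+1}$, then type~$B$ forces $x_s\rightarrow y$ and $x_{s+1}\rightarrow y$; combining $x\rightarrow x_{s+1}$, $y\rightarrow x$ and $x_{s+1}\rightarrow y$ gives the directed $3$-cycle $x\rightarrow x_{s+1}\rightarrow y\rightarrow x$ on $\{x,x_{s+1},y\}$, impossible. If $y$ is type~$C$ and $x_{s+1}$ lies in the source-group of $y$, then $x_{s+1}\rightarrow y$, and again $x\rightarrow x_{s+1}\rightarrow y\rightarrow x$ is a directed $3$-cycle, contradicting acyclicity of $S$.

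The main thing to be careful about — the real obstacle — is making sure every case really does reduce to a $3$-cycle (or, where it does not, identifying the precise $4$-vertex semi-cycle whose induced subgraph is a genuine shortcut, i.e.\ has a unique source, a unique sink, a Hamiltonian directed path and the non-edge demanded by the hypothesis "$y$ adjacent to both $x_s$ and $x_{s+1}$" — here the non-edge is between $x$ and one of $x_s,x_{s+1}$, or between $y$ and the other). One must double-check that $x$ is genuinely \emph{not} adjacent to at least one of $x_s,x_{s+1}$; but this is exactly what "type~$C$ vertex with source/sink groups" encodes, since $x$ is non-adjacent to the vertices strictly between its source-group and sink-group, and more importantly the shortcut obstruction is what the lemma's hypotheses are designed around — so once the orientations of the six relevant edges are pinned down, the contradiction is immediate. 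I would organise the write-up as the four bullet cases (type~$A$ / type~$C$ under $x\rightarrow y$; type~$B$ / type~$C$ under $y\rightarrow x$), each a two-line cyclic-orientation argument.
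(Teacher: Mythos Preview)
Your proposal is correct and matches the paper's approach almost exactly: the paper also argues by contradiction, pinning down the orientation of the edge between $y$ and $x_s$ (resp.\ $x_{s+1}$) from the type of $y$, and then exhibiting the directed $3$-cycle $x_s\rightarrow x\rightarrow y\rightarrow x_s$ in case~(1) and $x\rightarrow x_{s+1}\rightarrow y\rightarrow x$ in case~(2). The paper additionally spells out the $4$-vertex shortcut on $\{x_s,x,y,x_{s+1}\}$ when $y$ is adjacent to $x_{s+1}$ but not $x_s$ (and symmetrically), which you mention parenthetically; since the lemma as stated assumes adjacency to \emph{both}, your $3$-cycle argument alone already suffices, and the extra shortcut case in the paper is proving slightly more than the statement requires. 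One small inaccuracy in your final paragraph: $x$ \emph{is} adjacent to both $x_s$ and $x_{s+1}$ (they are the endpoints of the source- and sink-groups, not the gap), but this does not affect your argument since the $3$-cycles you use do not rely on any non-edge at $x$.
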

 \begin{proof}
Since $S$ is the semi-transitive orientation of the graph $\overline{B(K_m,K_n)}$, there does not exist any shortcut or cyclic orientation in $S$. Now, we are proving these two conditions in the following:
\begin{enumerate}
    \item Suppose the vertex $y$ is type $A$ and $y$ is adjacent to $x_s$. Then, the orientation of the edge between $y$ and $x_s$ is $y\rightarrow x_s$.  As $x_s\rightarrow x$, the subgraph induced by the vertices $x_s$, $x$, and $y$ has a cyclic orientation. However, it is not possible in the semi-transitive orientation $S$. Suppose the vertex $y$ is type $A$ and $y$ is adjacent to $x_{s+1}$.  Then, the orientation of the edge between $y$ and $x_{s+1}$ is $y\rightarrow x_{s+1}$. As $x_s\rightarrow x$ and $x_s\rightarrow x_{s+1}$, the subgraph induced by the vertices $x_s$, $x$, $y$, and $x_{s+1}$ create a short cut. Because $y$ and $x_s$ are not adjacent. However, it is not possible in the semi-transitive orientation $S$.
    Now, suppose the vertex $y$ is type $C$ and $x_s$ is in the sink group of $y$. Then, the orientation of the edge between $y$ and $x_s$ is $y\rightarrow x_s$. But, we proved earlier that this case cannot occur in the semi-transitive orientation $S$.

    \item Suppose the vertex $y$ is type $B$ and $y$ is adjacent to $x_{s+1}$. Then, the orientation of the edge between $y$ and $x_{s+1}$ is $x_{s+1}\rightarrow y$.  As $x\rightarrow x_{s+1}$, the subgraph induced by the vertices $x_{s+1}$, $y$, and $x$ has a cyclic orientation. However, it is not possible in the semi-transitive orientation $S$. Suppose the vertex $y$ is type $B$ and $y$ is adjacent to $x_s$.  Then, the orientation of the edge between $y$ and $x_s$ is $x_s\rightarrow y$. As $x\rightarrow x_s$ and $x_s\rightarrow x_{s+1}$, the subgraph induced by the vertices $x_s$, $y$, $x$, and $x_{s+1}$ create a short cut. Because $y$ and $x_{s+1}$ are not adjacent. However, it is not possible in the semi-transitive orientation $S$.
    Now, suppose the vertex $y$ is type $C$ and $x_{s+1}$ is in the source group of $y$. Then, the orientation of the edge between $y$ and $x_{s+1}$ is $x_{s+1}\rightarrow y$. But we proved earlier that this case cannot occur in the semi-transitive orientation $S$.
\end{enumerate}
\end{proof}

\begin{theorem}\label{thm45}
    A co-bipartite graph $\overline{B(K_m,K_n)}$ has a semi-transitive orientation if and only if 
\begin{itemize} 
\item $K_m$ and $K_n$ are oriented transitively,
\item Each vertex in $\overline{B(K_m,K_n)}$ is of one of the three types described in Observation \ref{obs41}, and 
\item The conditions present in Lemmas \ref{lm41}, \ref{lm42} and \ref{lm43} are satisfied. 
\end{itemize}
\end{theorem}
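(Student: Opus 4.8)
The plan is to mirror the proof of Theorem~\ref{main-orientation} for split graphs, the difference being that here the ``outer'' part $K_n$ is itself a clique, so vertices of $K_n$ must also be classified by type, relative to the longest directed path $\vec{P_2}$ in $K_m$. The necessity direction is then immediate: if $\overline{B(K_m,K_n)}$ is semi-transitively oriented, Lemma~\ref{lem-tran-orie} forces transitivity on $K_m$ and on $K_n$, Observation~\ref{obs41} places every vertex into one of the types $A$, $B$, $C$, and Lemmas~\ref{lm41}, \ref{lm42}, \ref{lm43} (each stated under the hypothesis that the orientation is semi-transitive) give the remaining restrictions. So essentially all of the work lies in the converse.

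For sufficiency, assume an orientation $S$ of $\overline{B(K_m,K_n)}$ satisfies the three listed conditions; I must show $S$ is acyclic and shortcut-free. For acyclicity I would take a shortest directed cycle $D$, if one exists: any chord of $D$, oriented one way or the other, splits $D$ into a strictly shorter directed cycle, so $D$ is chordless; since $V(K_m)$ and $V(K_n)$ are cliques, $D$ uses at most two vertices of each, hence $|D|\le 4$. A directed triangle $x\to y\to z\to x$ with $x,y$ in one part and $z$ in the other is impossible, because $z$ would receive its edge from the $\vec{P}$-later of $x,y$ and emit its edge to the $\vec{P}$-earlier of the two, and this pattern is incompatible with each of the three type definitions (the second bullet). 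The remaining case is a chordless directed $4$-cycle $a\to b\to c\to d\to a$ with $\{a,b\}\subseteq V(K_m)$, $\{c,d\}\subseteq V(K_n)$, and $a\nsim c$, $b\nsim d$; here one first checks that the types of $a,b,c,d$ are forced (to $B,A,B,A$ respectively, with their adjacency blocks on $\vec{P_1}$ and $\vec{P_2}$ pinned down by the two missing cross-edges), and then one must derive a contradiction from the restrictions of Lemmas~\ref{lm41}--\ref{lm43} applied to suitable intermediate vertices of $\vec{P_1}$ and $\vec{P_2}$. I expect this $4$-cycle analysis to be the main obstacle: the restrictions in those lemmas are phrased for configurations in which specific cross-edges are present, whereas the $4$-cycle has precisely two cross-edges absent, so one must either invoke the lemmas indirectly (through common neighbours forced by the adjacency blocks) or supply an extra argument excluding this configuration.

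For shortcut-freeness I would argue by contradiction in the same spirit: let $H$ be a shortcut on a vertex set $W$ with source $p$, sink $q$, and a Hamiltonian directed path $p=w_1\to\cdots\to w_k=q$ through $W$. Since a transitively oriented tournament is never a shortcut, $W$ meets both $V(K_m)$ and $V(K_n)$; using the shortcutting edge $p\to q$ together with the guaranteed pair of non-adjacent vertices of $H$, I would locate inside $H$ either a type $C$ vertex whose source- or sink-group contains two consecutive path vertices, or a type $A$/$B$ vertex adjacent to two consecutive path vertices that lie on such a group — which is exactly what Lemma~\ref{lm43} (the co-bipartite analogue of the relative-order restriction) forbids. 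Combining acyclicity and shortcut-freeness gives semi-transitivity. Besides the $4$-cycle case, the other delicate point is the bookkeeping of edge directions between the two cliques inside an arbitrary shortcut, carried out by repeatedly using that each clique induces a transitive tournament with a single source and a single sink (Lemma~\ref{lem-tran-orie}).
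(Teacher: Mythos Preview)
Your necessity direction is fine and matches the paper. Your instinct to treat acyclicity separately is also correct, and the chordless directed $4$-cycle you single out as the ``main obstacle'' is exactly where the argument breaks---and it breaks irreparably, because the three listed conditions do \emph{not} exclude it. Take $K_m=\{a,b\}$, $K_n=\{c,d\}$ with cross-edges $bc$ and $ad$ only (the underlying graph is $C_4$, the complement of a perfect matching on four vertices), and orient $a\to b$, $c\to d$, $b\to c$, $d\to a$. Both $2$-cliques are transitively oriented; relative to the opposite side $b$ and $d$ are of type~$A$ while $a$ and $c$ are of type~$B$; Lemma~\ref{lm41} is satisfied because in each clique the type-$A$/type-$B$ pair has disjoint cross-neighbourhoods; the hypotheses of both clauses of Lemma~\ref{lm42} are never met (one of the two required cross-edges is always absent or reversed); and Lemma~\ref{lm43} is vacuous since no vertex is of type~$C$. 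Yet the orientation is a directed cycle, hence not semi-transitive. So the sufficiency direction, as stated, is false, and no indirect use of Lemmas~\ref{lm41}--\ref{lm43} can close the gap you identified.

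The paper's own proof never confronts this: it passes directly from ``$S$ is not semi-transitive'' to ``there exists a shortcut,'' ignoring the cyclic alternative, and then analyses a putative shortcut by locating the first non-adjacent pair $x_i,x_j$ along its Hamiltonian path and case-splitting on which clique contains $x_{i-1}$, $x_{j-1}$ and $x_p$. That analysis leans chiefly on Observation~\ref{obs41} and Lemma~\ref{lm42} rather than on Lemma~\ref{lm43}, and is more concrete than the type-$C$ search you sketch; but it, too, tacitly presupposes acyclicity (for instance when it uses that every edge $x_1x_a$ with $a<j$ is oriented $x_1\to x_a$). In short: your shortcut-freeness outline could be sharpened along the paper's first-non-adjacent-pair lines, but the acyclicity hole you flagged is genuine and is shared by the paper's proof.
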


\begin{proof}
    From the Observation \ref{obs41} and Lemmas \ref{lm41}, \ref{lm42} and \ref{lm43}, we can show that a semi-transitive orientation of the graph $\overline{B(K_m,K_n)}$ satisfies all the conditions.\\
    Now we need to prove that if these conditions are stratified, then that orientation is semi-transitive. Suppose $S$ is an orientation of the graph $\overline{B(K_m,K_n)}$ that satisfies the mentioned condition. We assume that $S$ is not a semi-transitive orientation. So, there exists a shortcut in the orientation $S$. Suppose the induced subgraph of $x_1$,$x_2$, $\ldots$, $x_p$ vertices of the graph  $\overline{B(K_m,K_n)}$ contain the following shortcut $P$.

\begin{figure}[H]
\begin{center}
\begin{tikzpicture}[node distance=1cm,auto,main node/.style={circle,draw,inner sep=1pt,minimum size=5pt}]

\node (1) {$x_1$};
\node (2) [ right of=1] {$x_2$};
\node (3) [ right of=2] {$\cdots$};
\node (4) [ right of=3] {$x_i$};
\node (5) [ right of=4] {$\cdots$};
\node (6) [ right of=5] {$x_p$};

\draw[->] (1) -- (2);
\draw[->] (2) -- (3);
\draw[->] (3) -- (4);
\draw[->] (4) -- (5);
\draw[->] (5) -- (6);
\draw [->] (1) to [out=60, in=120] (6);

\end{tikzpicture}

\caption{\label{fig6} orientation among the vertices $x$, $y$, $x_s$ and $x_{s+1}$}
\end{center}
\end{figure}
In the shortcut $P$, vertices should be present from both $K_m$ and $K_n$. Otherwise, $P$ is a complete graph, not a shortcut. First, we check for each $x_i$, $1\leq i\leq p$ does there exist a $x_j$, $1\leq i<j\leq p$ such that $x_i\nsim x_j$. There should be at least one such $x_i$ that needs to be present in the shortcut $P$. We assume that $x_i$ and $x_j$ are the first two vertices we obtained in $P$ such that $x_i\nsim x_j$. Now, we consider the following two cases:\\

\textbf{Case 1:} Suppose $x_1= x_i$. Now, $x_1$ is is adjacent to every $x_a$, $1\leq a<j$. So, we obtain the following shortcut in the shortcut $P$ in the induced subgraph of the vertices $x_1$, $x_{j-1}$, $x_{j}$, $\ldots$ , $x_p$.
\begin{figure}[H]
\begin{center}
\begin{tikzpicture}[node distance=1cm,auto,main node/.style={circle,draw,inner sep=1pt,minimum size=5pt}]

\node (1) {$x_1$};
\node (2) [ right of=1] {$x_{j-1}$};
\node (3) [ right of=2] {$x_{j}$};
\node (4) [ right of=3] {$\cdots$};
\node (5) [ right of=4] {$x_j$};

\draw[->] (1) -- (2);
\draw[->] (2) -- (3);
\draw[->] (3) -- (4);
\draw[->] (4) -- (5);
\draw [->] (1) to [out=60, in=120] (5);

\end{tikzpicture}

\caption{\label{fig7} orientation among the vertices $x$, $y$, $x_s$ and $x_{s+1}$}
\end{center}
\end{figure}

Now, we know that $x_1$ and $x_j$ are in different partitions in the graph $\overline{B(K_m,K_n)}$. Without loss of generality, we assume $x_1\in V(K_m)$ and $x_j\in V(K_n)$. So, the possible cases for $x_{j-1}$, $x_p$ are $\{x_{j-1}, x_p\}\subseteq V(K_m)$ or $\{x_{j-1}, x_p\}\subseteq V(K_n)$ or $x_{j-1}\in V(K_m), x_p\in V(K_n)$ or $x_{j-1}\in V(K_n), x_p\in V(K_m)$. In the following, we prove this shortcut cannot exist for each case.\\
\textbf{Case 1.1:} If $\{x_{j-1}, x_{p}\}\subseteq V(K_m)$, then there exists some $x_q\in V(K_n)$ and $x_r\in V(K_m)$, $j\leq q<r \leq p$, such that the orientation of the edge between $x_q$ and $x_r$ is $x_q\rightarrow x_r$, otherwise, $x_p\notin V(K_m)$. The following orientation occurs for the induced subgraph of the vertices $x_1$, $x_{j-1}$, $x_j$, $x_q$, $x_r$ and $x_p$.

\begin{figure}[H]
\begin{center}
\begin{tikzpicture}[node distance=1cm,auto,main node/.style={fill,circle,draw,inner sep=1pt,minimum size=5pt}]

\node [main node](1) {};
\node [right of=1,xshift=-5mm] {$x_1$};
\node [main node](2) [ below of=1] {};
\node [right of=2,xshift=-5mm] {$x_{j-1}$};
\node [main node](3) [ below of=2] {};
\node [right of=3,xshift=-5mm] {$x_{r}$};
\node [main node](4) [ below of=3] {};
\node [right of=4,xshift=-5mm] {$x_{p}$};
\node [main node](5) [ left of=2] {};
\node [left of=5,xshift=5mm] {$x_j$};
\node [main node](6) [ left of=3] {};
\node [left of=6,xshift=5mm] {$x_q$};

\draw[->] (1) -- (2);
\draw[->] (2) -- (3);
\draw[->] (3) -- (4);
\draw[->] (5) -- (6);
\draw[->] (2) -- (5);
\draw[->] (6) -- (3);

\end{tikzpicture}
\caption{\label{fig8} Induced subgraph of the vertices $x_1$, $x_{j-1}$, $x_j$, $x_q$, $x_r$ and $x_p$.}
\end{center}
\end{figure}
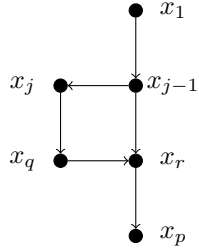
But, according to Lemma \ref{lm42}, $x_j$ and $x_r$ should be adjacent and the edge is oriented as $x_j\rightarrow x_r$. However, $x_j$ becomes type $C$ vertex. So, according to Observation \ref{obs41}, $x_1$ and $x_j$ should be adjacent. it contradicts our assumption regarding the shortcut. \\
\textbf{Case 1.2:} If $\{x_{j-1}, x_{p}\}\subseteq V(K_n)$, then the following orientation occurs for the induced subgraph of the vertices $x_1$, $x_{j-1}$, $x_{j}$ and $x_p$.

\begin{figure}[H]
\begin{center}
\begin{tikzpicture}[node distance=1cm,auto,main node/.style={fill,circle,draw,inner sep=1pt,minimum size=5pt}]

\node [main node](1) {};
\node [right of=1,xshift=-5mm] {$x_{j-1}$};
\node [main node](2) [ below of=1] {};
\node [right of=2,xshift=-5mm] {$x_{j}$};
\node [main node](3) [ below of=2] {};
\node [right of=3,xshift=-5mm] {$x_{p}$};
\node [main node](4) [ left of=2] {};
\node [left of=4,xshift=5mm] {$x_1$};

\draw[->] (1) -- (2);
\draw[->] (2) -- (3);
\draw[->] (4) -- (1);
\draw[->] (4) -- (3);

\end{tikzpicture}
\caption{\label{fig9} induced subgraph of the vertices $x_1$, $x_{j-1}$, $x_{j}$ and $x_p$.}
\end{center}
\end{figure}
But, according to Observation \ref{obs41}, $x_1$ and $x_j$ should be adjacent, as $x_1$ is a type $A$ vertex or $x_1$ is a type $C$ vertex, then $x_j$ should be in the source group of $x_1$. So, it contradicts our assumption regarding the shortcut. \\
\textbf{Case 1.3:} If $x_{j-1}\in V(K_m), x_p\in V(K_n)$, then the following orientation occurs for the induced subgraph of the vertices $x_1$, $x_{j-1}$, $x_j$ and $x_p$.

\begin{figure}[H]
\begin{center}
\begin{tikzpicture}[node distance=1cm,auto,main node/.style={fill,circle,draw,inner sep=1pt,minimum size=5pt}]

\node [main node](1) {};
\node [right of=1,xshift=-5mm] {$x_{j}$};
\node [main node](2) [ below of=1] {};
\node [right of=2,xshift=-5mm] {$x_{p}$};
\node [main node](3) [ left of=1] {};
\node [left of=3,xshift=5mm] {$x_{1}$};
\node [main node](4) [ left of=2] {};
\node [left of=4,xshift=5mm] {$x_{j-1}$};

\draw[->] (1) -- (2);
\draw[->] (3) -- (4);
\draw[->] (3) -- (2);
\draw[->] (4) -- (1);

\end{tikzpicture}
\caption{\label{fig10} induced subgraph of the vertices $x_1$, $x_{j-1}$, $x_{j}$ and $x_p$.}
\end{center}
\end{figure}
But, according to Lemma \ref{lm42}, $x_i$ and $x_j$ should be adjacent. So, it contradicts our assumption regarding the shortcut. \\
\textbf{Case 1.4:} If $x_{j-1}\in V(K_n), x_p\in V(K_m)$, then there exists some $x_q\in V(K_n)$ and $x_r\in V(K_m)$, $j\leq q<r \leq p$, such that the orientation of the edge between $x_q$ and $x_r$ is $x_q\rightarrow x_r$, otherwise, $x_p\notin V(K_m)$. The following orientation occurs for the induced subgraph of the vertices $x_1$, $x_{j-1}$, $x_j$, $x_q$, $x_r$ and $x_p$.

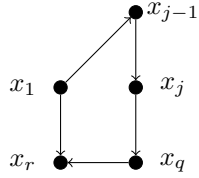
\begin{figure}[H]
\begin{center}
\begin{tikzpicture}[node distance=1cm,auto,main node/.style={fill,circle,draw,inner sep=1pt,minimum size=5pt}]

\node [main node](1) {};
\node [right of=1,xshift=-5mm] {$x_{j-1}$};
\node [main node](2) [ below of=1] {};
\node [right of=2,xshift=-5mm] {$x_{j}$};
\node [main node](3) [ below of=2] {};
\node [right of=3,xshift=-5mm] {$x_{q}$};
\node [main node](4) [ left of=2] {};
\node [left of=4,xshift=5mm] {$x_1$};
\node [main node](5) [ left of=3] {};
\node [left of=5,xshift=5mm] {$x_r$};

\draw[->] (1) -- (2);
\draw[->] (2) -- (3);
\draw[->] (4) -- (5);
\draw[->] (4) -- (1);
\draw[->] (3) -- (5);
\end{tikzpicture}
\caption{\label{fig11} induced subgraph of the vertices $x_1$, $x_{j-1}$, $x_{j}$ and $x_p$.}
\end{center}
\end{figure}
Now, the edge between $x_{j-1}$ and $x_q$ are oriented as $x_{j-1}\rightarrow x_q$. Then, according to Lemma \ref{lm42}, $x_1$ and $x_q$ should be adjacent, and the orientation of the edge between $x_1$ and $x_q$ is $x_1\rightarrow x_q$. But, the induced subgraph of the vertices $x_1$, $x_{j-1}$, $x_j$ and $x_q$ are the same as the subgraph mentioned in Case 1.2. Therefore, $x_1$ and $x_j$ are adjacent. So, it contradicts our assumption regarding the shortcut. \\
\textbf{Case 2:} Suppose $x_1\neq x_i$. Now, every $x_a$, $1\leq a<i$ is adjacent to every $x_b$, $1\leq b\leq p$ in the shortcut $P$. There exists at least one $x_a$ because, in Case 1, we proved that $x_1$ is adjacent to all the vertices present in the shortcut. Also, $x_i$ is is adjacent to every $x_c$, $1\leq c<j$. So, we obtain the following shortcut in the shortcut $P$ among $x_{i-1}$, $x_i$, $x_{j-1}$ and $x_j$ vertices.
\begin{figure}[H]
\begin{center}
\begin{tikzpicture}[node distance=1cm,auto,main node/.style={circle,draw,inner sep=1pt,minimum size=5pt}]

\node (1) {$x_{i-1}$};
\node (2) [ right of=1] {$x_i$};
\node (3) [ right of=2] {$x_{j-1}$};
\node (4) [ right of=3] {$x_j$};

\draw[->] (1) -- (2);
\draw[->] (2) -- (3);
\draw[->] (3) -- (4);
\draw [->] (1) to [out=60, in=120] (4);
\draw [->] (1) to [out=60, in=120] (3);

\end{tikzpicture}

\caption{\label{fig12} orientation among the vertices $x$, $y$, $x_s$ and $x_{s+1}$}
\end{center}
\end{figure}
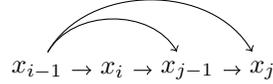

Now, we know that $x_i$ and $x_j$ are in different partition in the graph $\overline{B(K_m,K_n)}$. Without loss of generality, we assume $x_i\in V(K_m)$ and $x_j\in V(K_n)$. So, the possible cases for $x_{i-1}, x_{j-1}$ are $\{x_{i-1}, x_{j-1}\}\subseteq V(K_m)$ or $\{x_{i-1}, x_{j-1}\}\subseteq V(K_n)$ or $x_{i-1}\in V(K_m), x_{j-1}\in V(K_n)$ or $x_{i-1}\in V(K_n), x_{j-1}\in V(K_m)$. In the following we prove for each cases this shortcut cannot exist.\\
\textbf{Case 2.1:} If $\{x_{i-1}, x_{j-1}\}\subseteq V(K_m)$, then the following orientation occurs for the induced subgraph of the vertices $x_{i-1}$, $x_i$, $x_{j-1}$ and $x_j$.

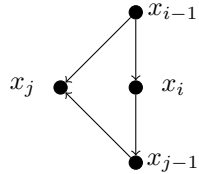
\begin{figure}[H]
\begin{center}
\begin{tikzpicture}[node distance=1cm,auto,main node/.style={fill,circle,draw,inner sep=1pt,minimum size=5pt}]

\node [main node](1) {};
\node [right of=1,xshift=-5mm] {$x_{i-1}$};
\node [main node](2) [ below of=1] {};
\node [right of=2,xshift=-5mm] {$x_i$};
\node [main node](3) [ below of=2] {};
\node [right of=3,xshift=-5mm] {$x_{j-1}$};
\node [main node](4) [ left of=2] {};
\node [left of=4,xshift=5mm] {$x_j$};

\draw[->] (1) -- (2);
\draw[->] (2) -- (3);
\draw[->] (1) -- (4);
\draw[->] (3) -- (4);

\end{tikzpicture}
\caption{\label{fig13} induced subgraph of the vertices $x_{i-1}$, $x_i$, $x_{j-1}$ and $x_j$.}
\end{center}
\end{figure}
But, according to Observation \ref{obs41}, $x_i$ and $x_j$ should be adjacent. So, it contradicts our assumption regarding the shortcut. \\
\textbf{Case 2.2:} If $\{x_{i-1}, x_{j-1}\}\subseteq V(K_n)$, then the following orientation occurs for the induced subgraph of the vertices $x_{i-1}$, $x_i$, $x_{j-1}$ and $x_j$.

\begin{figure}[H]
\begin{center}
\begin{tikzpicture}[node distance=1cm,auto,main node/.style={fill,circle,draw,inner sep=1pt,minimum size=5pt}]

\node [main node](1) {};
\node [right of=1,xshift=-5mm] {$x_{i-1}$};
\node [main node](2) [ below of=1] {};
\node [right of=2,xshift=-5mm] {$x_{j-1}$};
\node [main node](3) [ below of=2] {};
\node [right of=3,xshift=-5mm] {$x_{j}$};
\node [main node](4) [ left of=2] {};
\node [left of=4,xshift=5mm] {$x_i$};

\draw[->] (1) -- (2);
\draw[->] (2) -- (3);
\draw[->] (1) -- (4);
\draw[->] (4) -- (2);

\end{tikzpicture}
\caption{\label{fig14} induced subgraph of the vertices $x_{i-1}$, $x_i$, $x_{j-1}$ and $x_j$.}
\end{center}
\end{figure}
But, according to Observation \ref{obs41}, $x_i$ and $x_j$ should be adjacent. So, it contradicts our assumption regarding the shortcut. \\
\textbf{Case 2.3:} If $x_{i-1}\in V(K_m), x_{j-1}\in V(K_n)$, then the following orientation occurs for the induced subgraph of the vertices $x_{i-1}$, $x_i$, $x_{j-1}$ and $x_j$.

\begin{figure}[H]
\begin{center}
\begin{tikzpicture}[node distance=1cm,auto,main node/.style={fill,circle,draw,inner sep=1pt,minimum size=5pt}]

\node [main node](1) {};
\node [right of=1,xshift=-5mm] {$x_{j-1}$};
\node [main node](2) [ below of=1] {};
\node [right of=2,xshift=-5mm] {$x_{j}$};
\node [main node](3) [left of=1] {};
\node [left of=3,xshift=5mm] {$x_{i-1}$};
\node [main node](4) [ left of=2] {};
\node [left of=4,xshift=5mm] {$x_i$};

\draw[->] (1) -- (2);
\draw[->] (3) -- (4);
\draw[->] (3) -- (1);
\draw[->] (3) -- (2);
\draw[->] (4) -- (1);

\end{tikzpicture}
\caption{\label{fig15} induced subgraph of the vertices $x_{i-1}$, $x_i$, $x_{j-1}$ and $x_j$.}
\end{center}
\end{figure}
But, according to Lemma \ref{lm42}, $x_i$ and $x_j$ should be adjacent. So, it contradicts our assumption regarding the shortcut. \\
\textbf{Case 2.4:} If $x_{i-1}\in V(K_n), x_{j-1}\in V(K_m)$, then the following orientation occurs for the induced subgraph of the vertices $x_{i-1}$, $x_i$, $x_{j-1}$ and $x_j$.

\begin{figure}[H]
\begin{center}
\begin{tikzpicture}[node distance=1cm,auto,main node/.style={fill,circle,draw,inner sep=1pt,minimum size=5pt}]

\node [main node](1) {};
\node [right of=1,xshift=-5mm] {$x_{i-1}$};
\node [main node](2) [ below of=1] {};
\node [right of=2,xshift=-5mm] {$x_{j}$};
\node [main node](3) [left of=1] {};
\node [left of=3,xshift=5mm] {$x_{i}$};
\node [main node](4) [ left of=2] {};
\node [left of=4,xshift=5mm] {$x_{j-1}$};

\draw[->] (1) -- (2);
\draw[->] (3) -- (4);
\draw[->] (1) -- (3);
\draw[->] (4) -- (2);
\draw[->] (1) -- (4);

\end{tikzpicture}
\caption{\label{fig16} induced subgraph of the vertices $x_{i-1}$, $x_i$, $x_{j-1}$ and $x_j$.}
\end{center}
\end{figure}
But, according to Lemma \ref{lm42}, $x_i$ and $x_j$ should be adjacent. So, it contradicts our assumption regarding the shortcut. \\
As, we can see every $x_i$ and $x_j$, $\{x_i,x_j\}\subset V(P)$, are adjacent, therefore, $P$ is not a shortcut. So, the orientation $S$ is semi-transitive.
\end{proof}

	\bibliographystyle{plain}
	\bibliography{ref.bib}
\end{document}